\documentclass[a4paper,11pt]{amsart}
\usepackage[top=2.2cm,left=2.8cm,right=2.8cm,bottom=1.8cm]{geometry}

\usepackage[foot]{amsaddr}

\usepackage{hyperref}
\hypersetup{
    colorlinks,
    citecolor=green,
    filecolor=black,
    linkcolor=blue,
    urlcolor=blue
}
\hypersetup{linktocpage}

\usepackage{mathtools}
\usepackage{cite}
\usepackage{graphicx}
\usepackage{subcaption}
\usepackage{cleveref}
\usepackage{amsbsy}
\usepackage{latexsym}
\usepackage{amsfonts}
\usepackage{amssymb}
\usepackage{upgreek}
\usepackage[usenames]{color}
\usepackage{nicematrix}
\usepackage{amsmath,amsthm}
\usepackage{enumerate}

\usepackage{stmaryrd}
\usepackage{dsfont}

\DeclareMathOperator{\vol}{vol}
\DeclareMathOperator{\cost}{cost}

\DeclareMathOperator{\supp}{supp}
\DeclareMathOperator{\diam}{diam}
\DeclareMathOperator{\meas}{meas}
\DeclareMathOperator{\Span}{span}
\DeclareMathOperator*{\einf}{ess\,inf}

\DeclareMathOperator{\level}{level}

\providecommand{\corol}[1]{Cor.~#1}
\providecommand{\propo}[1]{Prop.~#1}
\providecommand{\sect}[1]{Sec.~#1}
\providecommand{\lem}[1]{Lem.~#1}
\providecommand{\theo}[1]{Thm.~#1}
\providecommand{\alg}[1]{Alg.~#1}

\providecommand{\abs}[1]{\lvert#1\rvert}
\providecommand{\bigabs}[1]{\bigl\lvert#1\bigr\rvert}

\providecommand{\biggabs}[1]{\biggl\lvert#1\biggr\rvert}

\providecommand{\norm}[1]{\lVert#1\rVert}
\providecommand{\bignorm}[1]{\bigl\lVert#1\bigr\rVert}
\providecommand{\Bignorm}[1]{\Bigl\lVert#1\Bigr\rVert}
\providecommand{\biggnorm}[1]{\biggl\lVert#1\biggr\rVert}

\providecommand{\ceil}[1]{\lceil#1\rceil}

\newtheorem{theorem}{Theorem}
\newtheorem{lemma}[theorem]{Lemma}
\newtheorem{prop}[theorem]{Proposition}

\theoremstyle{definition}
\newtheorem{definition}[theorem]{Definition}

\theoremstyle{remark}
\newtheorem{remark}[theorem]{Remark}

\numberwithin{equation}{section}
\numberwithin{theorem}{section}

\theoremstyle{plain}
\newtheorem{assumption}{Assumption}

\newcommand{\Ccomp}{C_{\mathrm{compr}}}
\newcommand{\Ccost}{C_{\mathrm{cost}}}

\newcommand{\cstable}{{c_\mathrm{stable}}}
\newcommand{\thetaelementnum}{\tau}

\newcommand{\cA}{{\mathcal{A}}}

\newcommand{\cM}{{\mathcal{M}}}
\newcommand{\cN}{{\mathcal{N}}}
\newcommand{\cF}{{\mathcal{F}}}

\newcommand{\cV}{\mathcal{V}}
\newcommand{\cT}{\mathcal{T}}

\newcommand{\ba}{\mathbf{a}}

\newcommand{\bz}{\mathbf{z}}

\newcommand{\br}{\mathbf{r}}

\newcommand{\sdd}{\,\mathrm{d}}

\newcommand{\PP}{\mathbb{P}}

\newcommand{\Chi}{\raise .3ex
\hbox{\large $\chi$}} 

\newcommand{\T}{\mathbb{T}}
\newcommand{\R}{\mathbb{R}}

\newcommand{\N}{\mathbb{N}}
\newcommand{\Z}{\mathbb{Z}}
\newcommand{\C}{\mathbb{C}}

\newcommand{\msx}[1]{\text{\textsc{Mesh}}(#1)}

\newcommand{\gknuprimepol}{\hat g_{\nu, \nu'}}
\newcommand{\sublev}{Q}
\newcommand{\jnum}{j}
\newcommand{\Jnum}{\mathcal J}
\newcommand{\opApproxnum}{n}

\newcommand{\initmesh}{\hat\cT_0}

\usepackage[section]{algorithm}
\usepackage{algorithmicx}
\usepackage{algpseudocode}

\usepackage{todonotes}

\title[
  Adaptive SGFEM: Optimality and non-affine coefficients
  ]{
  Adaptive stochastic Galerkin finite element methods: Optimality and non-affine coefficients
}

\author{Markus Bachmayr$^1$} 
\email{bachmayr@igpm.rwth-aachen.de}
\author{Henrik Eisenmann$^1$}
\email{eisenmann@igpm.rwth-aachen.de}
\author{Igor Voulis$^2$}
\email{i.voulis@math.uni-goettingen.de}
\address{$^1$ Institut f\"ur Geometrie und Praktische Mathematik, RWTH Aachen University, Templergraben 55, 52062 Aachen, Germany}
\address{$^2$ Institute for Numerical and Applied Mathematics, University of G\"ottingen, Lotzestr.\ 16-18, 37083 G\"ottingen, Germany}
\date{\today}

\thanks{M.B.\ acknowledges funding by Deutsche Forschungsgemeinschaft (DFG, German Research Foundation) -- project numbers 442047500, 501389786. The work of H.E.~was funded by Deutsche Forschungs\-gemeinschaft – project number 501389786. }

\date{\today}

\begin{document}

\maketitle

\begin{abstract}
Near-optimal computational complexity of an adaptive stochastic Galerkin method with independently refined spatial meshes for elliptic partial differential equations is shown. 
The method takes advantage of multilevel structure in expansions of random diffusion coefficients and combines operator compression in the stochastic variables with error estimation using finite element frames in space. A new operator compression strategy is introduced for nonlinear coefficient expansions, such as diffusion coefficients with log-affine structure.

  \smallskip
  \noindent \emph{Keywords.}  stochastic Galerkin method, finite elements, frame-based error estimation, multilevel expansions of random fields, optimality
\smallskip

\noindent \emph{Mathematics Subject Classification.} {35J25, 35R60, 41A10, 41A63, 42C10, 65N50, 65N30} 

\end{abstract}

\section{Introduction}

This paper deals with adaptive numerical methods for parameter-dependent or random elliptic partial differential equations (PDEs) based on a joint variational formulation in spatial and parametric variables, where the latter can be interpreted as random variables on which the data and thus the solutions of the considered PDE depend. We treat here the case of infinitely many parameters, which  is common when random fields are represented in series expansions.

We analyze the optimality properties of the adaptive stochastic Galerkin finite element method that was proposed and shown to converge in \cite{bachmayr2024convergent} for problems with parametric diffusion coefficients. This method generates sparse expansions in terms of product Legendre polynomials in the stochastic variables, where each Legendre coefficient is a spatial finite element function on an individually refined mesh. We extend the method from \cite{bachmayr2024convergent}, which is designed for affine coefficient parametrizations, to more general smooth nonlinear parametrizations by a new operator compression scheme. For both cases, we show optimality with respect to the total number of degrees of freedom of the generated approximations and, provided that an expansion with multilevel structure is chosen for the parametric coefficient, near-optimal computational costs of the method.

\subsection{Uniformly elliptic parametric PDEs}

On a polygonal domain $D \subset \R^d$, where typically $d \in \{1,2,3\}$, we consider the elliptic model problem 
\begin{equation}\label{eq:diffusionequation}
   - \nabla \cdot ( a \nabla u ) = f \quad \text{on $D$,} \qquad u = 0 \quad \text{on $\partial D$,}
\end{equation}
in weak formulation with $f \in L_2(D)$.  Assuming $\cM_0$ to be a countable index set with $0 \in \cM_0$ and taking $\cM = \cM_0 \setminus \{ 0 \}$, the parameter-dependent coefficient $a$ is specified by a function series in terms of given $\theta_\mu \in L_\infty(D)$ for $\mu \in \cM_0$, parametrized by vectors of scalar coefficients $y = (y_\mu)_{\mu \in \cM} \in Y$ with the parameter domain $Y = [-1,1]^\cM$. 

A standard test case is the affine parametrization
\begin{equation} \label{eq:affinecoeff}
a(y) = \theta_0 + \sum_{\mu\in \cM} y_\mu \theta_\mu, \quad y = (y_\mu)_{\mu\in\cM} \in Y.
\end{equation}
Nonlinear parametrizations are also of frequent interest; one example is the log-affine pa\-ra\-me\-tri\-za\-tion
\begin{equation} \label{eq:loglinearcoeff}
  a(y) = \exp\biggl(\theta_0 + \sum_{\mu\in \cM} y_\mu \theta_\mu\biggr)
  =\exp(\theta_0)\prod_{\mu\in \cM} \exp(y_\mu \theta_\mu)
  , \quad y  \in Y,
\end{equation}
but in the method developed in this work, we also admit more general smooth nonlinearities.

For any such choice of parametrization, we write
\[
  c_B = \inf_{y \in Y} \einf_{D} a(y) .
\]
In the case of~\eqref{eq:affinecoeff}, well-posedness of \eqref{eq:diffusionequation} uniformly for all $y \in Y$ is ensured by the \emph{uniform ellipticity condition} \cite{CDS:11}
\[ 
    c_B = \einf_D \biggl\{  \theta_0 - \sum_{\mu \in \cM} \abs{\theta_\mu} \biggr\} > 0 ,
\]
which entails the requirement $\einf_D \theta_0 >0 $. In the log-affine case \eqref{eq:loglinearcoeff}, it suffices to require
\[ 
    \log(c_B) = \einf_D \biggl\{  \theta_0 - \sum_{\mu \in \cM} \abs{\theta_\mu} \biggr\}  >-\infty \,.
\]
For $C_B = \sup_{y \in Y} \norm{ a(y) }_{L_\infty(D)}$, we obtain
\begin{equation}\label{eq: CBlinear}
  C_B  \leq  \begin{cases}  \displaystyle 2 \norm{\theta_0}_{L_\infty(D)} - c_B , & \text{assuming~\eqref{eq:affinecoeff},} \\[6pt]
       \displaystyle c_B^{-1} \exp\bigl( 2 \norm{\theta_0}_{L_\infty(D)} \bigr) , & \text{assuming~\eqref{eq:loglinearcoeff}.}\end{cases}
\end{equation}

\subsection{Stochastic Galerkin methods}

We consider the solution map $y\mapsto u(y)$ as an element 
\[    \cV = L_2(Y,V,\sigma) \simeq V\otimes L_2(Y,\sigma),   \]
with $V = H_0^1(\Omega)$ and where for simplicity, we take $\sigma$ to be the uniform measure on $Y$. This corresponds to assuming the scalar random coefficients $y_\mu$ to be independently uniformly distributed; however, other product distributions on $Y$ could be treated in a completely analogous manner.

The variational formulation of~\eqref{eq:diffusionequation} in both spatial and stochastic variables on the space $\cV$ reads:
find $u \in \cV$ such that for all $v \in \cV$,
\begin{equation}\label{eq:weakdiffusioneq}
\langle B u ,v\rangle_{\cV', \cV}=
  \int_Y\int_D a(y)\nabla u(y)\cdot \nabla v(y) \sdd x\sdd\sigma(y) 
=
\int_Y \int_D f\, v(y) \sdd x  \sdd\sigma(y).
\end{equation}
Under uniform ellipticity, we have the equivalence of norms 
\begin{equation}\label{eq:energienormequiv}
c_B\norm{v}_\cV^2 \leq \langle Bv, v\rangle = \norm{v}_B^2 \leq C_B\norm{v}_\cV^2.
\end{equation}
As a consequence, there is a unique solution of~\eqref{eq:weakdiffusioneq} that continuously depends on $f$, and this solution agrees with the solution of \eqref{eq:diffusionequation} pointwise in $y \in Y$.

We consider approximation spaces of the form
\begin{equation}\label{eq:ansatzspaces}
  \cV_N = \Bigl\{\sum_{\nu\in F} v_\nu \otimes L_\nu\colon v_\nu\in V_\nu, \nu\in F \Bigr\}\subset \mathcal V
\end{equation}
with a finite index set $F$, product Legendre polynomials $L_\nu$, $\nu\in F$, and finite-dimensional subspaces $V_\nu\subset V$ for each $\nu\in F$. It is important to stress that the spaces $V_\nu$ are allowed to differ for each $\nu$. This is especially beneficial if the functions $\theta_\mu$ have local supports. Then also the functions $u_\nu$ in an expansion of $u$ have local features, that can only be effectively captured by individual spatial discretizations.

The method analysed in this work is based on the stochastic Galerkin variational formulation for $u_N\in \cV_N$,
\[
\langle B u_N , v\rangle_{\cV',\cV} = \int_Y \int_D f \, v(y)\sdd x \sdd\sigma(y)\quad\text{for all $v\in \cV_N $}.
\]
By C\'ea's lemma, the best approximation $u_N$ to the solution $u$ in the energy norm is also a quasi-best approximation in the $\cV$-norm.

\subsection{A convergent adaptive method}

In~\cite{bachmayr2024convergent}, we introduced an adaptive Galerkin method for solving~\eqref{eq:diffusionequation} in the formulation \eqref{eq:weakdiffusioneq} using finite elements as spatial approximation spaces in~\eqref{eq:ansatzspaces}. This method ensures energy norm error reduction by a uniform factor in each step, which is also known as \emph{saturation property}, and thus convergence in $\cV$-norm. 

As discussed in detail in \cite[Sec.~1.3]{bachmayr2024convergent}, the best possible convergence rates of approximations with respect to the total number of degrees of freedom can be achieved when coefficient parametrization functions $\theta_\mu$, $\mu \in \mathcal M$, with multilevel structure are combined with independently refined adaptive meshes for the Legendre coefficients. This complicates adaptive error estimation and mesh refinement. 
The task amounts to extracting spatially localized error information from the coupled elliptic systems of equations for the individual Legendre coefficients that result from stochastic Galerkin discretization. Here standard techniques for error estimation are infeasible due to the impracticality of ensuring Galerkin orthogonality. Moreover, due to the interaction of independently refined meshes for the different Legendre coefficients, the residual involves facet functionals on potentially strongly varying levels of refinement. As a consequence, a standard cycle of marking elements followed by a single mesh refinement cannot guarantee uniform error reduction~\cite{CDN:12}.

In \cite{bachmayr2024convergent}, we overcome these issues by error estimation based on coefficients with respect to \emph{finite element frames} \cite{HS:16,HSS:08}, that is, families of hat functions on all levels of a refinement hierarchy. This approach does not require Galerkin orthogonality and provides a natural handling of edge terms. Uniform error reduction is ensured by allowing for several steps of mesh refinement in each iteration of the method by on a tree coarsening procedure \cite{B:18,BV}.

The relevant subset of interactions between different Legendre coefficients under the action of the operator $B$ for which this spatial error estimation and refinement is carried out is determined by a semi-discrete operator compression in the parametric variables. That this can be realized efficiently depends strongly on the multilevel structure of the coefficient parametrization. This strategy, adapted from our previous work \cite{BV} (where it was used with wavelet-based spatial discretizations), in \cite{bachmayr2024convergent} was applied only to affine parametrizations \eqref{eq:affinecoeff}. In the present work, we make use of the flexibility that this approach offers to also apply it to nonlinear coefficient parametrizations.

\subsection{Relation to previous work and novelty}

The results on adaptive approximability of solutions with multilevel coefficient representations that are relevant for the present work have been obtained in \cite{BCDS:17} for the affine case \eqref{eq:affinecoeff}, as summarized in \cite{BV,bachmayr2024convergent}. Polynomial approximations for problems with nonlinear coefficient parametrizations have been analyzed under similar assumptions in \cite{MR4799248}, but in a semi-discrete setting without spatial discretization.

Previous work on the complexity analysis of adaptive stochastic Galerkin methods focused on wavelet-based spatial discretizations. While the methods analyzed in \cite{G:14,BCD:17} are not of optimal complexity, optimal convergence with near-optimal computational costs was shown for the method in \cite{BV}, which can be regarded as a precursor to the method using finite elements in \cite{bachmayr2024convergent} that we analyze in the present work. Here, we thus close the gap to the wavelet-based method in \cite{BV} in that we obtain analogous optimality properties, but with the additional flexibility of finite element discretizations, for example in the much simpler treatment of nontrivial domain geometries.

In the case of stochastic Galerkin FEM, we are not aware of any previous complete analysis of the complete computational complexity of such a method. For the method in \cite{Gittelson:13}, which also used a type of operator compression in the parametric variables, a semidiscrete analysis is performed that does not fully account for spatial discretizations. In \cite{EGSZ:15}, a method using the same mesh for all Legendre coefficients is shown to yield quasi-optimal mesh cardinalities, but without control on the overall costs. Methods using independent meshes have been considered in  \cite{EGSZ:14,CPB:19,BPR21}. The convergence analysis in \cite{CPB:19,BPR21}, however, starts from assuming the saturation property, and no analysis of the computational costs is available.

While nonlinear coefficient parametrizations can in certain cases such as \eqref{eq:loglinearcoeff} be reduced to advection-diffusion problems with affine coefficients \cite{UEE:12}, a direct treatment as done here has the advantage of preserving the structure of the elliptic problem in the construction of adaptive methods. For a single spatial mesh, convergence with uniform error reduction is shown for uniformly elliptic and bounded coefficients in \cite{EH:23}, but without complexity estimates. To the best of our knowledge, the scheme analyzed here is the first adaptive method to guarantee convergence and optimal approximations for such nonlinear parametrizations with independent meshes for the Legendre coefficients.

Our assumptions lend themselves to an extension of the method considered here to problems with domain uncertainty, where a family of parameter-dependent domains is transformed to a reference domain \cite{MR3563282,MR4799248}.
This results in matrix-valued parametric diffusion tensors, which that are rational functions with respect to each scalar parameter $y_\mu$.
Although the results of the present work are formulated for scalar-valued diffusion coefficients, we expect that they can be extended to this case.

\subsection{Outline} In \Cref{sec:adaptgalerkin}, we review the adaptive stochastic Galerkin finite element method introduced in \cite{bachmayr2024convergent} and formulate our main results concerning nonlinear coefficient parametrizations and the optimality properties of the method. The construction of new operator compression methods for the nonlinear parametrizations is carried out in \Cref{sec:OperatorCompression}. The computational costs of these new approximations are investigated in \Cref{sec:Complexity}.
We then prove our main results in \Cref{sec:Optimality} and summarize our findings in \Cref{sec:Conclusion}.



\section{Adaptive Galerkin Method}\label{sec:adaptgalerkin}

\subsection{Notation}\label{sec:Notation}
We consider the problem~\eqref{eq:diffusionequation} in variational form on $\cV = L_2(Y, V, \sigma)$ as in~\eqref{eq:weakdiffusioneq}.
For the discretization of $V$, we consider conforming simplicial finite element meshes that are generated from an initial conforming mesh~$\initmesh$ for $D\subset{\R^d}$ by bisection, as analyzed for general $d$ in~\cite{Stevenson:08}. 
We write $\cT\geq \tilde \cT$ if $\cT$ (which need not be conforming) can be generated from $\tilde\cT$ by bisection.
On such conforming meshes, we consider the standard Lagrange finite element spaces 
\begin{equation}\label{eq:defV}
  V(\cT) = \PP_1(\cT) \cap V,
\end{equation}
where $\PP_1(\cT)$ denotes the functions on $D$ that are piecewise affine on each element of $\cT$. 
For $L_2(Y,\sigma)$ we use the orthonormal basis of product Legendre polynomials
\[
  L_\nu(y) = \prod_{\mu\in\cM} L_{\nu_\mu}(y_\mu), \quad \nu\in\cF = \bigl\{ \nu \in \N_0^\cM \colon \#\supp \nu < \infty \bigr\}.
  \]
  
We now assume a family of simplicial meshes $\mathbb{T} = (\cT_\nu)_{\nu \in F}$ with finite $F\subset \cF$ and conforming $\cT_\nu \geq \hat\cT_0$ for each $\nu \in F$.
Then we consider the stochastic Galerkin discretization subspaces $\cV(\mathbb{T})$ given by
\begin{equation}\label{eq:discrspace}
  \cV(\mathbb{T}) = \biggl\{ \sum_{\nu \in F} v_\nu L_\nu \colon  v_\nu \in V(\cT_\nu),\ \nu \in F  \biggr\} \subset \cV.
\end{equation}
The total number of degrees of freedom for representing each element of $\cV(\mathbb{T})$ is then $\dim \cV(\mathbb{T})=\sum_{\nu \in F} \dim V(\cT_\nu)$. We use the abbreviation
\begin{equation}\label{eq:Ndef}
  N(\mathbb{T}) = \sum_{\nu \in F} \# \cT_\nu ,
\end{equation}
so that $N(\mathbb{T}) \eqsim \dim \cV(\mathbb{T})$. 

With the initial mesh $\hat{\mathcal T}_0$, for $j\geq 1$, let $\hat{\mathcal T}_j$ be a refinement of $\hat{\mathcal T}_{j-1}$ such that each element is bisected $d$ times, meaning that each edge is bisected once~\cite{Stevenson:08}. 
We use the nodal basis functions $\psi_{j,k}\in V(\hat\cT_j) = \PP_1(\hat\cT_j) \cap V$ with the property
\begin{equation}\label{eq:NodalBasisFunctions}
  \norm{\psi_{j,k}}_V = 1
  \quad{\text{and}}\quad
  \psi_{j,k}(x_{k'}) = 0 \quad\text{if and only if $k = k'$,}
\end{equation}
where $x_{k'}$ are the vertices of $\hat\cT_j$.  We denote the set of corresponding indices by
\begin{equation}\label{eq:enumerationset}
  \cN_j = \{k\colon \text{$x_k$ is a node in $\hat\cT_j$}\}  ,
\end{equation}
so that $\cN_j \supset \cN_{j-1}\supset\dots \supset \cN_{0}$. Finally, let 
\begin{equation}\label{eq:Thetaset}
  \Theta = \{\lambda = (j,k)\colon j\in \N_0, k \in \cN_j\}
\end{equation}
be the index set for the functions $\psi_\lambda$.

We define $\level(E)$, for each edge~$E$, as  the unique $j$ such that the uniform mesh~$\hat\cT_j$ contains $E$, and
$\level(T)$, for each simplicial element or face~$T$, as the smallest $j$ such that $T$ can be resolved by $\hat\cT_j$.
Similarly, for the indices~$\lambda = (j,k)\in \Theta$, we define $\abs{\lambda} = j$.

Moreover, we introduce the following notation for Legendre coefficients: for $\nu \in \cF$ and $v \in \cV$,
\[
   [ v ]_\nu =  \int_Y v(y) L_\nu(y)\,\sdd\sigma(y) ,
\]
and similarly, for functionals $\xi \in \cV' = L_2(Y,V',\sigma)$, 
\begin{equation}\label{eq:functionalcoeffs}
   [  \xi ]_\nu = \int_Y \xi(y) L_\nu(y)\,\sdd\sigma(y)   \,,
\end{equation}
so that
\[
   \langle \xi , v \rangle_{\cV',\cV} = \sum_{\nu \in \cF} \langle [\xi]_\nu, [v]_\nu\rangle_{V',V}  .
\]
We also define the stochastic components $[B]_{\nu\nu'}\colon V\to V'$ of the operator $B$ by its 
diffusion coefficient 
\[
[a]_{\nu\nu'}(x)= \int_Y 
a(x,y)
L_\nu(y)  L_{\nu'}(y) 
\sdd\sigma(y) , \] 
and 
\begin{equation}\label{eq: stoch operator components}
  \bigl\langle [B]_{\nu\nu'} v, w \bigr\rangle
  =
  \int_D  
  [a]_{\nu\nu'}(x) \nabla v(x)\cdot \nabla w(x) \sdd x. 
\end{equation}

\subsection{Assumptions}
For stochastic Galerkin methods based on multilevel expansions of random fields, the following assumptions on the functions~$\theta_\mu$  are natural; see \cite{bachmayr2024multilevel} for an overview of expansions of this type and their applications.
\begin{assumption}\label{ass:wavelettheta}
  We assume $\theta_\mu \in L_\infty(D)$ for $\mu \in \cM_0$ such that  the following conditions hold for all $\mu \in \cM$:
  \begin{enumerate}[{\rm(i)}] 
  \item\label{ass:waveletthetaDiam} $\diam \supp \theta_\mu\eqsim 2^{-|\mu|}$, 
  \item\label{ass:waveletthetaLocalization}  $\#\{\mu\in \cM\colon \abs{\mu}=\ell, \supp\theta_\mu\cap B(x,r)\}\lesssim \max\{1, 2^{d\ell}r^d\} $ for all $x\in \R^d$ and $r>0$, where $B(x,r)$ is the ball of radius $r$ with center $x$,
  \item\label{ass:waveletthetaDecay} for some $\alpha>0$, we have $\norm{\theta_\mu}_{L_\infty(D)} \lesssim 2^{-\alpha |\mu|}$.
  \end{enumerate}
\end{assumption}
As a consequence of \Cref{ass:wavelettheta}\eqref{ass:waveletthetaLocalization} we also have the bound $\#\{\mu\in \cM\colon \abs{\mu}=\ell\}\lesssim  2^{d\ell} $.

\begin{remark}
Under similar assumptions, approximation rates for the resulting solutions $u$ have been shown in~\cite{BCDS:17}. In particular, under more general conditions on the $\theta_\mu$ (which are implied by \Cref{ass:wavelettheta}), but additionally assuming $H^2$-regularity of the corresponding Poisson problem, the results in \cite{BCDS:17} yield for $\alpha \in (0,1]$ and $d\geq 2$ the existence of families of meshes $\mathbb T$ with $N(\mathbb T) \to \infty$ such that for any $s < \alpha / d$, there exists $C>0$ such that 
\begin{equation}
   \min_{v \in \cV(\mathbb T) } \norm{u - v}_\cV \leq C N(\mathbb T)^{-s}.
\end{equation}
As the numerical tests in \cite{BV} indicate, with spatial approximations of sufficiently high order, this holds also for $\alpha>1$.
As discussed further also in \cite{BV,bachmayr2024multilevel} and the survey \cite{bachmayr2024multilevel}, the limiting rate $\alpha/d$ is the same rate as the best possible rate for semidiscrete sparse polynomial approximation \cite{BCM:17} and for finite element approximation of a single randomly chosen (spatial) realization of the solution $u(y)$ with $y \sim \sigma$. In the present context, optimality thus means achieving rates arbitrarily close to $\alpha/ d$.
\end{remark}

To arrive at a method with controlled computational complexity, we also require the functions~$\theta_\mu$ to be piecewise polynomial on suitable simplicial meshes as follows.

\begin{assumption}\label{ass:pwpolytheta}
  There exists $m\in\N_0$, $\thetaelementnum\in \N_0$ and $K\in \N$ such that for all $\mu \in \cM$, $\theta_\mu \in \PP_m(\hat\cT_{\abs{\mu}+\thetaelementnum})$ and $\#\{T\in \hat\cT_{\abs{\mu}+k}\colon \supp \theta_\mu \cap T\neq \emptyset\}\leq K$.
\end{assumption}

In what follows, we restrict ourselves to piecewise linear finite elements as in \eqref{eq:defV} for convenience, but higher-order finite elements could be used in an entirely analogous manner. However, in the case of rough input random fields corresponding to $\alpha < 1$ in \Cref{ass:wavelettheta}\eqref{ass:waveletthetaDecay}, piecewise linear elements already yield the best possible convergence rate.

We consider the more general scenario that the diffusion coefficient is given by 
\begin{equation}\label{eeq:NonlinearAffineDiffusion}
  a(x,y) = f\biggl(\sum_{\mu\in \mathcal M} y_\mu\theta_\mu(x)\biggr)
\end{equation}
with an analytic function $f$.
For the approximation analysis we require \Cref{lem:DisjointSupport} and knowledge of the analytic continuation of $f$ to a rectangle in the complex plane.
\begin{assumption}\label{ass:AnalyticContinuation}
 The function $f$ has a uniformly bounded analytic continuation to the open the rectangle
 \[
  \mathcal R_0 = \textstyle\frac{1}{2}\displaystyle(-r_1-r_2,r_1+r_2) + \textstyle\frac{i}{2}\displaystyle(-r_1+r_2, r_1-r_2)
 \]
 with 
 \[ 
 r_1 =  \frac{C\rho}{1-2^{-(\alpha -\alpha')}} 
 \quad\text{and}\quad 
 r_2=\frac{ C\rho^{-1}}{1-2^{-(\alpha +\alpha')}}.
 \]
  Here, $C$ is the hidden constant from \Cref{lem:DisjointSupport}\eqref{ass: coefficient decay}, $\rho>1$, $\alpha'<\alpha$ and $\abs{f(z)}\leq M$ for $z\in \mathcal R_0$.
\end{assumption}

We may assume the constant $C$ in \Cref{ass:AnalyticContinuation} to be one, by replacing the functions $\theta_\mu$ with $\frac{1}{C}\theta_\mu$ and $f(z)$ by $f(Cz)$.


\subsection{Semi-discrete residual approximation}

We now describe the residual approximation in an adaptive solver when in contrast to \cite{BV, bachmayr2024convergent}, the diffusion coefficient is not affine linear as in \eqref{eq:affinecoeff}, but of the form \eqref{eq:loglinearcoeff} or the more general form \eqref{eeq:NonlinearAffineDiffusion}.
To this end,
we first need a notion of costs for an approximated operator.
For an approximation  $\tilde B$ of $B$ we define the costs as
\[
  \cost(\tilde B) = 
  \sup_{ \nu\in \mathcal F}  \#
  \bigl\{ 
    \nu'\in\mathcal F\colon   [\tilde B]_{\nu\nu'} \neq 0 \bigr\}
  =
  \sup_{v\in V, \nu\in \mathcal F} \#
  \supp \bigl([\tilde B(v\otimes L_\nu )]_{\nu'} \bigr)_{\nu'\in\mathcal F}
  .
\]
Then for any $v\in \cV$,
\[
  \#\supp(  [\tilde Bv]_{\nu} )_{\nu\in\cF}
  \leq
  \cost(\tilde B)
  \#\supp(  [v]_{\nu} )_{\nu\in\cF}
\]
For computing residual estimates and for approximately solving Galerkin systems, we require a sequence of such approximate operators $(B_\opApproxnum)_{\opApproxnum\in \N_0}$ satisfying
\begin{equation}\label{eq: approx and cost}
\begin{aligned}
  \norm{B-B_\opApproxnum}_{\mathcal V\to \mathcal V'}&\leq \Ccomp 2^{-\alpha' \opApproxnum},\\
  \cost(B_\opApproxnum)&\leq \Ccost2^{d \opApproxnum}
\end{aligned}
\end{equation} 
with  $\alpha'$ possibly differing from $\alpha$ in \Cref{ass:wavelettheta}\eqref{ass:waveletthetaDecay}.
In the affine linear case~\eqref{eq:affinecoeff} such compressions are given by truncation, as for example used in~\cite{BV, bachmayr2024convergent}
with $\alpha = \alpha'$. This relies on the three term recursion of Legendre polynomials, which yield
\begin{multline}\label{eq:threetermrecursion}
  \int_Y \int_D  y_\mu\theta_\mu  L_\nu(y)\nabla v \cdot L_{\nu'}(y) \nabla w \sdd x \sdd\sigma(y)\\
  =
  \Bigl(\sqrt{\beta_{\nu_\mu +1}}\delta_{\nu+e_\nu,\nu' } + \sqrt{\beta_{\nu_\mu}}\delta_{\nu-e_\nu,\nu' }  \Bigr)\int_D \theta_\mu\nabla v\cdot \nabla w \sdd x
\end{multline}
with $\beta_k = (4 - k^{-2})^{-1}$.

In \Cref{sec:OperatorCompression}, we provide approximations of operators for the log-uniform problem~\eqref{eq:loglinearcoeff} in \Cref{thm: operator compression} and for more general diffusion coefficients in \Cref{thm: polynomial operator compression}, at the price that constants deteriorate for smaller values of $\alpha- \alpha'$. With this machinery at hand, we can derive the same properties as for the affine case treated in~\cite{BV,bachmayr2024convergent} of the routine \textsc{Apply} given in \Cref{alg:apply_semidiscr}.
Under our general assumptions on approximations of operators, we have the following result, where for sequences $\mathbf{a}$ on arbitrary index sets and $s>0$, we use the quasi-norms
\[
\norm{\ba}_{\cA^s} = \max_{N\geq 0} \,(N+1)^s \min_{\#\supp \bz \leq N}\norm{\ba-\bz}_{\ell_2}
\]
that characterize best $N$-term approximability with rate $s$.

\begin{algorithm}[t]
	\caption{$(F_i, \opApproxnum_i)_{i = 0}^I = \text{\textsc{Apply}}(v; \eta)$, for $N\coloneq \#\supp ([v]_\nu)_{\nu \in \cF} < \infty$, $\eta>0$.} \label{alg:apply_semidiscr}
\begin{enumerate}[{\bf(i)}]
\item If $\norm{B}_{\cV\to\cV'} \norm{v}_{\cV} \leq \eta$, return the empty tuple with $F = \emptyset$;
otherwise, with $\bar I=\ceil{\log_2 N}$, for $i = 0,\ldots, \bar I$, determine $F_i \subset \cF$ such that $\# F_i \leq 2^{i}$ and $P_{F_i} v = \sum_{\nu \in F_i} [v]_\nu L_\nu$ satisfies
\begin{equation*}
    \norm{v- P_{F_i} v }_{\cV} \leq C \min_{\# \tilde F \leq 2^i} \norm{ v - P_{\tilde F} v }_\cV  
\end{equation*}
with an absolute constant $C>0$.
Choose $I$ as the minimal integer such that
\[
  \delta = \norm{B}_{\cV\to\cV'} \norm{v - P_{ F_I} v }_\cV \leq \frac{\eta}{2}.
\]

\item \label{algstep:computeapproxnum}With $d_0 =P_{F_0} v$, $d_i = (P_{F_i} - P_{F_{i-1}} )v$, $i=1,\ldots,\bar I$, and $N_i = \# F_i$, set 
\begin{equation*}
	\opApproxnum_i = \Biggl\lceil\frac1{\alpha'} \log_2\biggl( \frac{\Ccomp}{\eta - \delta} \biggl( \frac{\norm{d_i}_\cV}{N_i} \biggr)^{\frac{\alpha'}{\alpha'+d}} \Bigl( \sum_{k=0}^I \norm{d_k}_\cV^{\frac{d}{\alpha'+d}} N_k^{\frac{\alpha'}{\alpha'+d}} \Bigr) \biggr)\Biggr\rceil,
	\quad i = 0, \ldots, I
\end{equation*}
and return $(F_i, \opApproxnum_i)_{i = 0}^I$.
\end{enumerate}
\end{algorithm}

\begin{prop}\label{prop:semidiscrapply}
	Let $s >0$ with $s< \frac{\alpha'}{d}$, let $B$ and the sequence $(B_\opApproxnum)_{\opApproxnum\in\N_0}$ satisfy~\eqref{eq: approx and cost}, let $v$ satisfy $\#\supp ([v]_\nu)_{\nu\in\cF} < \infty$, and let $\opApproxnum_i$ and $d_i$, $i=0,\ldots,I$, be as defined in \Cref{alg:apply_semidiscr}.  Then 
	    \begin{equation}\label{eq:elljest}
      \max_{i=0,\ldots,I} \opApproxnum_i \lesssim 1 + \abs{\log \eta} + \log \bignorm{\bigl(\norm{[v]_\nu}_V \bigr)_{\nu\in\cF}}_{\cA^s}.
    \end{equation}
Moreover, for $g = \sum_{i = 0}^{I} B_{\opApproxnum_i} d_i$ and $F^+ = \supp ([g]_\nu)_{\nu \in \cF}$, we have 
\begin{equation}\label{eq:StochApplyApprox6y}
\norm{B v - g}_{\cV'} \leq \sum_{i = 0}^{I}\norm{(B- B_{\opApproxnum_i}) d_i}_{\cV'}\leq  \eta
\end{equation}
and
    \begin{equation}\label{eq:Fest}
    \# F^+
     \lesssim  \sum_{i=0}^I 2^{d \opApproxnum_i} \# F_i   \lesssim \eta^{-\frac1s} \bignorm{\bigl(\norm{[v]_\nu}_V \bigr)_{\nu\in\cF}}_{\cA^s}^{\frac1s}.
    \end{equation}
  The constants in the inequalities depend on $C$ as in~\Cref{alg:apply_semidiscr}, on $C_B$, $d$, $\alpha'$, $s$, $\Ccomp$, and on $\Ccost$  from~\eqref{eq: approx and cost}.
\end{prop}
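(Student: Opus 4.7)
The result is the standard balance-of-error analysis of a Cohen--Dahmen--DeVore style \textsc{Apply} routine, adapted to the present semi-discrete setting. The approach is to (a) verify the error bound via a telescoping decomposition of $v$, (b) exploit the $\cA^s$-assumption on $(\norm{[v]_\nu}_V)_{\nu\in\cF}$ to control the atoms $d_i$ geometrically, and (c) insert the explicit formula for $\opApproxnum_i$ into the cost estimate so that the exponents collapse to yield $\eta^{-1/s}$.

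\textbf{Step 1: Error estimate \eqref{eq:StochApplyApprox6y}.} Since $v = (v-P_{F_I}v)+\sum_{i=0}^I d_i$ by construction and $g = \sum_{i=0}^{I} B_{\opApproxnum_i} d_i$, I would write
\[
   Bv - g = B(v - P_{F_I} v) + \sum_{i=0}^I (B-B_{\opApproxnum_i}) d_i,
\]
so that $\norm{Bv-g}_{\cV'} \leq \delta + \sum_{i=0}^I \norm{(B-B_{\opApproxnum_i})d_i}_{\cV'}$. The truncation contribution is bounded by $\delta \le \eta/2$ by the choice of $I$, and for the compression part I would use $\norm{(B-B_{\opApproxnum_i})d_i}_{\cV'}\le \Ccomp 2^{-\alpha'\opApproxnum_i}\norm{d_i}_\cV$ from~\eqref{eq: approx and cost}. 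Substituting the explicit ceiling for $\opApproxnum_i$ from step (ii) of the algorithm, the factors $\norm{d_i}_\cV^{d/(\alpha'+d)} N_i^{\alpha'/(\alpha'+d)}$ collect into the sum $S := \sum_k \norm{d_k}_\cV^{d/(\alpha'+d)} N_k^{\alpha'/(\alpha'+d)}$, and the ratio telescopes to give $\sum_i \Ccomp 2^{-\alpha'\opApproxnum_i}\norm{d_i}_\cV \le \eta - \delta$, proving the bound.

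\textbf{Step 2: Bounds on $\opApproxnum_i$ and $I$.} By quasi-optimality of $P_{F_k}v$ and the $\cA^s$-assumption, $\norm{v-P_{F_k}v}_\cV \lesssim 2^{-ks}\norm{\ba}_{\cA^s}$ with $\ba = (\norm{[v]_\nu}_V)_\nu$, so $\norm{d_k}_\cV \lesssim 2^{-ks}\norm{\ba}_{\cA^s}$ and the defining inequality $\delta\le\eta/2$ forces $2^I \lesssim (\norm{\ba}_{\cA^s}/\eta)^{1/s}$. Substituting these into $S$ and using $s<\alpha'/d$, so that the exponent $(\alpha'-sd)/(\alpha'+d)$ is positive, I would sum the resulting geometric series in $k$, giving
\[
    S \lesssim 2^{I(\alpha'-sd)/(\alpha'+d)}\norm{\ba}_{\cA^s}^{d/(\alpha'+d)} \lesssim \eta^{-(\alpha'-sd)/(s(\alpha'+d))}\,\norm{\ba}_{\cA^s}^{\alpha'/(s(\alpha'+d))}.
\]
Inserting $S$ and the trivial bound $\norm{d_i}_\cV/N_i\le\norm{v}_\cV$ into the formula for $\opApproxnum_i$, taking $\log_2$ yields \eqref{eq:elljest}.

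\textbf{Step 3: Support bound \eqref{eq:Fest}.} From $\cost(B_{\opApproxnum_i})\le\Ccost 2^{d\opApproxnum_i}$ and $\#F_i=N_i$ one obtains $\#F^+\lesssim \sum_i 2^{d\opApproxnum_i} N_i$. Substituting the ceiling for $\opApproxnum_i$ and raising to the $d$-th power, the factors rearrange as
\[
   2^{d\opApproxnum_i} N_i \lesssim (\eta-\delta)^{-d/\alpha'} S^{d/\alpha'}\,\norm{d_i}_\cV^{d/(\alpha'+d)} N_i^{\alpha'/(\alpha'+d)},
\]
so summing in $i$ produces a factor $S$, giving $\#F^+\lesssim (\eta-\delta)^{-d/\alpha'}S^{(\alpha'+d)/\alpha'}$. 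Plugging in the estimate for $S$ from Step~2 and using $\eta-\delta\ge\eta/2$, the exponent of $\eta$ collapses to $-1/s$ and the exponent of $\norm{\ba}_{\cA^s}$ to $1/s$. The main obstacle is precisely this algebraic collapse: the delicate exponent bookkeeping for $S$ (which involves the two competing exponents $d/(\alpha'+d)$ and $\alpha'/(\alpha'+d)$, and must be combined with both the algorithm's choice of $\opApproxnum_i$ and the approximation rate $s<\alpha'/d$) is what makes the final cancellation come out to exactly $\eta^{-1/s}\norm{\ba}_{\cA^s}^{1/s}$, and it is what forces the restriction $s<\alpha'/d$.
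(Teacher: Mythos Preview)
Your proof is correct and follows exactly the standard Cohen--Dahmen--DeVore balancing argument that the paper tacitly invokes: the paper does not actually prove this proposition in-line, but states it as a direct adaptation of the affine case in \cite{BV,bachmayr2024convergent}, and your Steps~1--3 reproduce precisely that argument (telescoping decomposition, geometric decay of $\norm{d_k}_\cV$ from the $\cA^s$ bound, and the exponent collapse via the optimized $\opApproxnum_i$). One minor remark: the intermediate inequality in \eqref{eq:StochApplyApprox6y} as printed omits the truncation term $\delta=\norm{B(v-P_{F_I}v)}_{\cV'}$; your decomposition correctly carries it along and absorbs it into the final bound $\eta$ via $\delta\le\eta/2$ and $\sum_i\Ccomp 2^{-\alpha'\opApproxnum_i}\norm{d_i}_\cV\le\eta-\delta$.
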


\subsection{Adaptive Galerkin method}

\begin{algorithm}[t]
	\caption{Adaptive Galerkin Method}\label{alg:AdaptiveMethod}
  \flushleft  Set the parameters $\opApproxnum, \zeta, \omega_0< \omega_1$ and relative accuracy $\rho$, set initial $u^0= 0$ and formally, $\norm{\hat\br^{-1}}_{\ell_2} = C_\Psi\norm{f}_{\cV'}$. Let $k = 0$;
  \begin{enumerate}[{\bf(i)}]
\item \label{alg:point2}$(\Lambda^+,\hat\br^k,([r^k]_\nu)_\nu, \eta_k, b_k) = \text{\textsc{ResEstimate}}(u^k; \zeta, \frac{\zeta}{1+\zeta}\norm{\hat\br^{k-1}}_{\ell_2}, \varepsilon)$
\item If $b_k\leq \varepsilon$, return $u^k$.
\item\label{algstep:TreeApprox} $\Lambda^{k+1} = \textsc{TreeApprox}(\Lambda^k, \Lambda^+, \hat\br^k, (1-\omega_0^2)\norm{\hat\br^k}^2_{\ell_2})$ satisfying
 \[ 
 \begin{aligned}
    \norm{\hat \br^k|_{\Lambda^{k+1}}}_{\ell_2} &\geq \omega_0 \norm{\hat\br^k}_{\ell_2} ,\\
    \#(\Lambda^{k+1} \setminus \Lambda(\T^k) ) &\lesssim 
    \#(\tilde \Lambda \setminus \Lambda(\T^k) )\quad\text{for any $\tilde \Lambda\supset\Lambda(\T)$ with tree structure}\\ &\text{such that $ \norm{\hat \br^k|_{\tilde\Lambda}}_{\ell_2} \geq \omega_1 \norm{\hat\br^k}_{\ell_2} $;}
 \end{aligned}
 \]
\item\label{algstep:Mesh} $\T^{k+1} = \msx{\Lambda^{k+1}}$;
\item\label{algstep:GalSolve} $u^{k+1} = \text{\textsc{GalerkinSolve}}(\T^{k+1}, u^k , r^k, \opApproxnum, \frac1{\sqrt{c_P}}\frac{\rho}{c_\Psi}\norm{\hat\br^k}_{\ell_2} )$
\item $k\leftarrow k+1$ and go to~{\bf(\ref{alg:point2})};
\end{enumerate}
\end{algorithm}

We now recapitulate the adaptive Galerkin method \Cref{alg:AdaptiveMethod} from~\cite{bachmayr2024convergent}. 
While we follow the standard strategy of adaptive finite element schemes in iteratively refining discretizations by computing error estimates from approximate Galerkin solutions, the particular realization of error estimation and refinement differs in some crucial aspects.

The method \textsc{ResEstimate} (\Cref{alg:ResEstimate}, following~\cite[\alg 3.4]{bachmayr2024convergent}) in \Cref{alg:AdaptiveMethod}\eqref{alg:point2} generates localized residual error estimates and is the most involved routine. It first utilizes \textsc{Apply} in \Cref{alg:apply_semidiscr} to produce a residual approximation $r^k \in \cV'$ with $\supp ([r^k]_\nu)_\nu= F^+\subset\cF$ with 
\[
  \Bigl(\sum_{\nu\in\cF}\bignorm{[f- Bu^k]_\nu - [r^k]_\nu}_{V'}^2 \Bigr)^\frac12\leq \eta,
\]
where $[r^k]_\nu = 0$ for $\nu\notin F$.
Here each $[r^k]_\nu$ is piecewise polynomial on some $\cT_\nu\geq \initmesh$. 
Then for any $J$ and for each $\nu\in F^+$ we can produce a finite set $\Theta^+_\nu\subset\Theta$ 
such that
\[
  \Bigl(\sum_{\lambda\notin\Theta^+_\nu} \bigl\langle [r^k]_\nu, \psi_\lambda \bigr\rangle_{V',V}^2 
  \Bigr)^\frac12
  \leq 
  C2^{-J}
  \Bigl(
  \sum_{\lambda\notin\Theta} \bigl\langle [r^k]_\nu, \psi_\lambda \bigr\rangle_{V',V}^2
  \Bigr)^\frac12
\]
utilizing that $[r^k]_\nu$ is piecewise polynomial.
Then with 
\[
  \Lambda^+ = \bigl\{(\nu,\lambda)\in \cF\times \Theta\colon \nu\in F^+, \lambda\in \Theta^+_\nu  \bigr\}
\]
we set
\[
  \hat\br^k_{\nu,\lambda} = \bigl\langle [r^k]_\nu, \psi_\lambda \bigr\rangle_{L^2(D)}
\]
for $(\nu,\lambda)\in \Lambda^+$, and $ \hat\br^k_{\nu,\lambda}  = 0$ otherwise.
The method \textsc{ResEstimate} now finds a suitable $\Lambda^+$ such that with 
\[
\bz^k_{\nu,\lambda} = \bigl\langle [f_Bu^k]_\nu, \psi_\lambda \bigr\rangle_{V',V}
\]
for any $(\nu,\lambda)\in \cF\times \Theta$, we have the estimate
\[
b_k-\eta_k \leq \norm{\hat\br^k - \bz^k}_{\ell_2(\cF\times \Theta)} \leq \zeta \norm{\bz^k}_{\ell_2(\cF\times \Theta)} \leq b_k.
\]
The method \cite[\alg{3.4}]{bachmayr2024convergent} is tailored to the case of affine coefficients~\eqref{eq:affinecoeff}. We adapt this method in \Cref{alg:ResEstimate} to accommodate the use of more general operator approximations introduced in \Cref{sec:OperatorCompression}.

The discretization refinement is carried out in \Cref{alg:AdaptiveMethod}\eqref{algstep:TreeApprox} and \eqref{algstep:Mesh}  by the methods \textsc{TreeApprox}\cite[\alg 4.4]{BV} and \textsc{Mesh} as described in \cite[\sect{3.2}]{bachmayr2024convergent}, generating a suitable family of meshes $\T^{k+1}$. Finally, in \Cref{alg:AdaptiveMethod}\eqref{algstep:GalSolve} an approximate Galerkin solution in $\cV(\T^{k+1})$ is computed with the method \textsc{GalerkinSolve}\cite[\alg 3.5]{bachmayr2024convergent}.

\subsection{Convergence and optimality}
In~\cite[\theo{4.3}]{bachmayr2024convergent} we have shown the error reduction by a uniform factor of the iterates generated by \Cref{alg:AdaptiveMethod}.
The requirements on the parameters to achieve $\delta<1$ and hence also achieve error reduction are
\begin{equation}\label{eq:zetaCondition}
  0<(C_{B,\Psi}+2)\zeta<1\quad\text{with}\quad C_{B,\Psi} = \frac{C_\Psi^2c_B}{c_\psi^2c_B}
\end{equation}
and
\begin{equation}\label{eq:omega0Condition}
  \omega_0 > (C_{B,\Psi}+1)\frac{\zeta}{1-\zeta}.
\end{equation}
Then it is possible to also find parameters $\opApproxnum$, $\rho$ and $\hat J$ such that $\gamma = \gamma(\zeta, \opApproxnum, \rho, \hat J)$ satisfies
\begin{equation}\label{eq:gammaCondition}
  \gamma  < \frac{\omega_0 - (1+\omega_0)\zeta}{(1+\zeta)\sqrt{C_{B,\Psi}C_B}C_\Psi}.
\end{equation}

The main result of~\cite{bachmayr2024convergent} is the following theorem on error reduction.

\begin{theorem}[{see \cite[\theo{4.3}]{bachmayr2024convergent}}]\label{thm:ErrorReduction}
Let~\eqref{eq:zetaCondition} and~\eqref{eq:omega0Condition} hold.
  Then there exist parameters $\opApproxnum,\rho,\hat J$ such that~\eqref{eq:gammaCondition} is satisfied, and the adaptive algorithm achieves the error reduction
  \begin{equation}\label{eq:ErrorReduction}
    \norm{u-u^{k+1}}_B \leq \delta \norm{u-u^{k}}_B,
  \end{equation}
  where 
  \begin{equation}\label{eq:delta_error_reduction}
    \delta = \biggl(  1  - \frac{\bigl( \omega_0 - (1 + \omega_0)\zeta \bigr)^2 }{ C_{B,\Psi} } + \gamma^2 ( 1 + \zeta )^2 C_\Psi^2 C_B  \biggr)^{\frac12}.
  \end{equation}
\end{theorem}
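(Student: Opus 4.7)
The plan is to follow the error-reduction framework of~\cite[\theo{4.3}]{bachmayr2024convergent}, whose driving idea is a Pythagoras-type identity in the energy norm: if $u^\star_{k+1}$ denotes the \emph{exact} Galerkin projection of $u$ onto $\cV(\T^{k+1})$, then $\norm{u - u^\star_{k+1}}_B^2 = \norm{u - u^k}_B^2 - \norm{u^\star_{k+1} - u^k}_B^2$, so it suffices to bound $\norm{u^\star_{k+1}-u^k}_B^2$ from below by a fixed fraction of $\norm{u-u^k}_B^2$ and then absorb the perturbation caused by replacing $u^\star_{k+1}$ with the inexact output $u^{k+1}$ of \textsc{GalerkinSolve}. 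The parameters $\zeta$, $\omega_0$ and $\gamma$ track the three successive degradations: approximate residual, tree-restricted refinement, and inexact solve, and are exactly the quantities appearing in~\eqref{eq:delta_error_reduction}.

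\textbf{Chain of estimates.} First, the frame property links the true residual coefficient vector $\bz^k$ and the error via $c_\Psi \norm{u-u^k}_\cV \le \norm{\bz^k}_{\ell_2} \le C_\Psi \norm{u-u^k}_\cV$, which combined with~\eqref{eq:energienormequiv} yields a two-sided equivalence between $\norm{\bz^k}_{\ell_2}$ and $\norm{u-u^k}_B$ with ratio controlled by $C_{B,\Psi}$. By construction, the output of \textsc{ResEstimate} satisfies $\norm{\hat\br^k - \bz^k}_{\ell_2} \le \zeta\norm{\bz^k}_{\ell_2}$, so $\hat\br^k$ is a relative $\zeta$-perturbation of $\bz^k$. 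The tree-approximation guarantee of step~\eqref{algstep:TreeApprox} gives $\norm{\hat\br^k|_{\Lambda^{k+1}}}_{\ell_2} \ge \omega_0\norm{\hat\br^k}_{\ell_2}$, which by the triangle inequality converts into $\norm{\bz^k|_{\Lambda^{k+1}}}_{\ell_2} \ge \bigl(\omega_0-(1+\omega_0)\zeta\bigr)\norm{\bz^k}_{\ell_2}$, a positive fraction thanks to~\eqref{eq:omega0Condition}. Interpreted via the frame and Galerkin orthogonality on $\cV(\T^{k+1})$, this bulk estimate produces the first subtractive term $(\omega_0-(1+\omega_0)\zeta)^2/C_{B,\Psi}$ inside the square root of~\eqref{eq:delta_error_reduction}. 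Finally, \textsc{GalerkinSolve} introduces an error bounded through its tolerance by $\gamma(1+\zeta)C_\Psi C_B^{1/2}\norm{u-u^k}_B$; its square gives the second, additive term in~\eqref{eq:delta_error_reduction}. Choosing $\opApproxnum,\rho,\hat J$ so that the resulting $\gamma$ meets~\eqref{eq:gammaCondition} keeps $\delta < 1$.

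\textbf{Main obstacle.} Relative to~\cite{bachmayr2024convergent}, the nontrivial input needed for the above chain to go through unchanged is the relative-accuracy certificate $b_k-\eta_k \le \norm{\hat\br^k-\bz^k}_{\ell_2} \le \zeta\norm{\bz^k}_{\ell_2} \le b_k$ returned by \textsc{ResEstimate}. In the affine case, this was established in~\cite[\alg{3.4}]{bachmayr2024convergent} via the truncation compression derived from the three-term recursion~\eqref{eq:threetermrecursion}. In the present more general setting, one feeds the operator approximations of \Cref{sec:OperatorCompression} into \textsc{Apply} (\Cref{alg:apply_semidiscr}), and \Cref{prop:semidiscrapply} ensures that any prescribed absolute tolerance is reached with controlled support growth. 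The hard part of the proof is therefore to check that the version of \textsc{ResEstimate} used here (\Cref{alg:ResEstimate}), once driven by these new approximations, still delivers the same two-sided relative bound on $\hat\br^k-\bz^k$ and on $\norm{\bz^k}_{\ell_2}$; once this is in place, the energy-norm bookkeeping of~\cite[\theo{4.3}]{bachmayr2024convergent} applies verbatim and yields~\eqref{eq:ErrorReduction} with $\delta$ as in~\eqref{eq:delta_error_reduction}.
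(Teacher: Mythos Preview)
The paper does not give its own proof of this theorem: it is stated with the attribution ``see \cite[\theo{4.3}]{bachmayr2024convergent}'' and no proof environment follows; the result is simply recalled from the companion paper. Your outline faithfully summarizes the argument of that cited proof (Pythagoras in the energy norm, bulk chasing through the $\zeta$-perturbed residual, and absorption of the inexact-solve error), and you correctly isolate the one point that needs checking in the present generalized setting, namely that \textsc{ResEstimate} driven by the new operator compressions of \Cref{sec:OperatorCompression} still delivers the relative bound $\norm{\hat\br^k-\bz^k}_{\ell_2}\le\zeta\norm{\bz^k}_{\ell_2}$; this is precisely what \Cref{prop:ResEstimate} supplies (see the remark there that ``\eqref{eq:reserrbound} follows in complete analogy to the proof of \cite[\propo{3.7}]{bachmayr2024convergent}'').
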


One of the main results of the present work is that the iterates in \Cref{alg:AdaptiveMethod} are also quasi-optimal approximations to the solution whenever $u \in \mathcal A^s$ with $s > 0$, where $\mathcal A^s \subset \cV$ is the class of solutions that can be approximated at rate $s$ with respect to the total number of degrees of freedom, that is,
\[
   \mathcal A^s = \{ v \in \cV \colon \abs{ v }_{\mathcal A^s } < \infty \} \,,\qquad
      \abs{v}_{\mathcal A^s} 
  =
  \sup_{N > 0}
 N^s 
  \min_{N(\mathbb T)\leq N}
  \min_{v_N\in \mathcal V(\mathbb T)}\norm{v - v_N}_\mathcal V
   < \infty.
\]
Moreover, we show that for $s < \alpha /d$, the computational costs have scaling arbitrarily close to the optimal rate.

We obtain these optimality results under the additional condition
\begin{equation}\label{eq:omega1Condition}
  \omega_1(1-\zeta) +\zeta < (1-2\zeta) {\frac{c_\mathrm{stable}\sqrt{c_B}}{C_\Psi \sqrt{C_B}}},
\end{equation}
where $\cstable$ is a constant only dependent on $\hat\cT_0$ introduced in \Cref{sec: Stability conforming frames}. We prove the following result in \Cref{sec:proofOptimalityResult}.
\begin{theorem}\label{thm: quasioptimal approximation}
  Let the parameters of \Cref{alg:AdaptiveMethod} satisfy \eqref{eq:zetaCondition}, \eqref{eq:omega0Condition}, \eqref{eq:gammaCondition}, and~\eqref{eq:omega1Condition}, and
  let the solution $u$ of~\eqref{eq:diffusionequation} satisfy $u \in \mathcal A^s$.
  Then we have the following:
  \begin{enumerate}[{\upshape(i)}]
  \item \emph{Quasi-optimal approximations to $u$:}
  The iterates $u^k$ of \Cref{alg:AdaptiveMethod} satisfy 
  \begin{equation*}\label{eq:quasi_optimal_approximation}
  N(\mathbb T^k) - 
  N(\mathbb T^0) \leq C_1C_2^\frac1s \bigl(1- \delta^{\frac1s}\bigr)^{-1} \norm{u-u^k}_{\cV}^{-\frac1s} \norm{u}_{\mathcal A^s}^{\frac1s} \,,
  \end{equation*}
  where $C_1>0$ depends only on shape regularity, $C_2\geq1$ is independent of $u$, $k$ and $s$, and $\delta<1$ is given by~\eqref{eq:delta_error_reduction}.
  \item \emph{Optimal computational complexity in the affine case up to logarithmic factor:}
  If the diffusion coefficient is affine linear as in~\eqref{eq:affinecoeff} and Assumptions \ref{ass:wavelettheta} and \ref{ass:pwpolytheta} hold, then if $s<\frac{\alpha}{d}$, 
  the number of elementary operations in the algorithm is bounded by 
  \begin{equation*}\label{eq:NumberOfOperationsAffine}
    C_3 \bigl(1+\abs{\log\norm{u-u^k}_\cV} +\log{\norm{u}_{\cA^s}}\bigr)^3
   \norm{u-u^k}_\cV^{-\frac1{s}} \norm{u}_{\cA^s}^{\frac1{s}} \,,
  \end{equation*}
  with $C_3$ independent of $k$.
  \item \emph{Almost optimal computational complexity in the non-affine case:}
  If the diffusion coefficient is given by~\eqref{eeq:NonlinearAffineDiffusion} and if Assumptions \ref{ass:wavelettheta}, \ref{ass:pwpolytheta}, and \ref{ass:AnalyticContinuation} hold, then there exist sequences of operator approximations $B_\opApproxnum$ so that for any $s'<s<\frac{\alpha}{d}$, 
  the number of elementary operations in the algorithm is bounded by 
  \begin{equation*}\label{eq:NumberOfOperations}
    C_4\bigl(1+\abs{\log\norm{u-u^k}_\cV} +\log{\norm{u}_{\cA^s}}\bigr)^r
    \norm{u-u^k}_\cV^{-\frac1{s'}} \norm{u}_{\cA^s}^{\frac1{s'}} \,,
  \end{equation*}
 where $C_4 > 0$ is independent of $k$, and $r>0$ is independent of $s', s$ and $k$.
\end{enumerate}
\end{theorem}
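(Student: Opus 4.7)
The plan is to leverage the uniform error reduction from \Cref{thm:ErrorReduction} together with a quasi-optimal marking property of the tree-based refinement step, and then combine this with the complexity of the individual subroutines. This follows the general template of optimality proofs for adaptive algorithms (as for the wavelet precursor \cite{BV}), but must be adapted to the frame-based residual representation and to the new operator compression scheme from \Cref{sec:OperatorCompression}.

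For part~(i), I would first quantify the relation between the residual coefficient vector $\hat\br^k$ and the energy-norm error: combining the frame stability constant $\cstable$ from \Cref{sec: Stability conforming frames} with the quasi-optimality of the Galerkin projection under \eqref{eq:energienormequiv}, $\norm{\hat\br^k}_{\ell_2}$ is equivalent to $\norm{u-u^k}_\cV$ up to constants depending on $c_B, C_B, C_\Psi$ and $\cstable$. Then I would invoke $u\in\cA^s$ to produce, for each step $k$, a reference tree $\tilde\Lambda\supset\Lambda(\cT^k)$ of cardinality $\lesssim \norm{u-u^k}_\cV^{-1/s}\norm{u}_{\cA^s}^{1/s}$ whose restricted residual satisfies $\norm{\hat\br^k|_{\tilde\Lambda}}_{\ell_2}\geq \omega_1\norm{\hat\br^k}_{\ell_2}$. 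The near-best property of \textsc{TreeApprox} in \Cref{alg:AdaptiveMethod}\eqref{algstep:TreeApprox} then transfers this bound to $\#(\Lambda^{k+1}\setminus\Lambda(\cT^k))$. Summing in $k$ using $\norm{u-u^k}_\cV\leq\delta^k\norm{u-u^0}_\cV$ yields a geometric series in $\delta^{1/s}$, giving the claimed bound $N(\mathbb T^k)-N(\mathbb T^0)\lesssim(1-\delta^{1/s})^{-1}\norm{u-u^k}_\cV^{-1/s}\norm{u}_{\cA^s}^{1/s}$. Condition \eqref{eq:omega1Condition} is what makes the existence of the reference tree $\tilde\Lambda$ with this size possible; it encodes that the bulk parameter $\omega_1$ is small enough that catching an $\omega_1$-fraction of the residual already forces tree-size comparable to the best approximation.

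For part~(ii), the degree-of-freedom bound from (i) must be combined with per-step cost bounds. In the affine case, the three-term recursion \eqref{eq:threetermrecursion} yields $\alpha'=\alpha$ in \eqref{eq: approx and cost}, so \Cref{prop:semidiscrapply} bounds the cost of \textsc{Apply} by $\eta^{-1/s}\lesssim\norm{u-u^k}_\cV^{-1/s}\norm{u}_{\cA^s}^{1/s}$ for any $s<\alpha/d$, with an additional factor of $(1+\abs{\log\eta}+\log\norm{u}_{\cA^s})$ from \eqref{eq:elljest}. The remaining routines \textsc{ResEstimate}, \textsc{TreeApprox} and \textsc{GalerkinSolve} cost at most linearly in the output size up to logarithmic factors (from sorting/binning and from the iterative Galerkin solve driven to relative accuracy $\rho$). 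Summing the geometric cost series over all steps to reach accuracy $\norm{u-u^k}_\cV$ gives the stated bound, with the power of the logarithm accumulating to $3$.

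For part~(iii), the only change relative to (ii) is that the operator compression in \Cref{thm: operator compression} and \Cref{thm: polynomial operator compression} now only provides \eqref{eq: approx and cost} with $\alpha'<\alpha$, at the price of constants $\Ccomp,\Ccost$ that degenerate as $\alpha'\uparrow\alpha$. Given $s'<s<\alpha/d$, I would pick $\alpha'\in(s'd,s\,d)$ so that \Cref{prop:semidiscrapply} applies with rate $s'$, and the cost of \textsc{Apply} becomes $\eta^{-1/s'}\norm{u}_{\cA^s}^{1/s'}$; proceeding as in (ii) gives the claimed bound with some $r>0$ coming from the accumulated polylogarithmic factors in the subroutines and the operator-compression construction. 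The main obstacle throughout is the quasi-optimal marking argument in (i): because the frame is redundant and the spatial meshes for different Legendre indices are refined independently, the identification of $\omega_1\norm{\hat\br^k}_{\ell_2}$ with a usable lower bound on the true residual, uniformly in $k$, is delicate and is exactly where \eqref{eq:omega1Condition} and the frame stability constant $\cstable$ are needed.
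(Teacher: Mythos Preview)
Your overall architecture for part~(i) matches the paper's, but two load-bearing steps are missing or only gestured at.

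First, you jump from ``summing $\#(\Lambda^{k+1}\setminus\Lambda(\T^k))$ in $k$'' directly to a bound on $N(\T^k)-N(\T^0)$. This is not automatic: the step \textsc{Mesh} performs conforming closure, and a single increment $N(\T^{j+1})-N(\T^j)$ is \emph{not} controlled by $\#(\Lambda^{j+1}\setminus\Lambda(\T^j))$. What does hold is the amortized bound $N(\T^k)-N(\T^0)\lesssim\sum_{j=1}^k\#(\Lambda^{j}\setminus\Lambda(\T^{j-1}))$, which the paper states as \Cref{lem:amortizedCosts} and derives from the Binev--Dahmen--DeVore/Stevenson complexity result for newest-vertex bisection. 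You need this ingredient explicitly.

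Second, the sentence ``invoke $u\in\cA^s$ to produce a reference tree $\tilde\Lambda$ whose restricted residual satisfies $\norm{\hat\br^k|_{\tilde\Lambda}}_{\ell_2}\geq\omega_1\norm{\hat\br^k}_{\ell_2}$'' hides the entire content of the argument. The paper does this in two moves. In \Cref{lem:AbstractOptimality} it takes a \emph{conforming} set $\hat\Lambda\supset\Lambda(\T^k)$ realising a $\beta$-approximation, forms the Galerkin solution $\tilde u$ on $\cV(\hat\Lambda\cup\Lambda(\T^k))$, uses Galerkin orthogonality to obtain $\Psi(Bu^k-f)|_{\hat\Lambda}=\Psi B(u^k-\tilde u)|_{\hat\Lambda}$, and then pairs this with the \emph{stable} frame coefficients $\bz$ of $\tilde u-u^k$ (this is where $\cstable$ enters, not in a norm equivalence for $\hat\br^k$ as you suggest) to obtain $\norm{\Psi(Bu^k-f)|_{\hat\Lambda}}_{\ell_2}\geq\hat\omega\norm{\Psi(Bu^k-f)}_{\ell_2}$. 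Then \Cref{lem:LambdaEstimate} transfers this from the true residual to $\hat\br^k$ via the $\zeta$-accuracy of \textsc{ResEstimate}, which is precisely what forces the form of condition~\eqref{eq:omega1Condition}. Finally, since conforming sets have tree structure, the \textsc{TreeApprox} comparison property applies. Your proposal correctly flags this step as delicate but does not supply the mechanism; without the Galerkin-orthogonality/stable-coefficient pairing, there is no reason a good \emph{approximation} set should capture a fixed fraction of the \emph{residual}.

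For (ii) and (iii) your sketch is broadly right. One correction for (iii): the loss from $s$ to $s'$ in the paper does not come solely from choosing $\alpha'<\alpha$ in \Cref{prop:semidiscrapply}. The cost bound \eqref{eq:ropest} in \Cref{prop:ResEstimate} carries an additional factor $\eta^{-p}N(\T)$ (from the growing polynomial degrees in the nonlinear compression), and combining $N(\T^j)\lesssim\norm{u-u^j}_\cV^{-1/s}\norm{u}_{\cA^s}^{1/s}$ with $\eta_j\eqsim\norm{u-u^j}_\cV$ produces the exponent $p+\tfrac1s=\tfrac1{s'}$; this is where the arbitrary $s'<s$ actually arises.
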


\section{Compressible infinite elliptic systems of PDEs}\label{sec:OperatorCompression}

As a first first step in the adaptive scheme, we require a semidiscrete adaptive compression of the operator $B\colon \mathcal V\to\mathcal V'$. In the affine linear case~\eqref{eq:affinecoeff} a suitable compression is given by the truncated operators $B_\opApproxnum$ by
\begin{equation}\label{eq: Btrunclinear}
  \langle B_\opApproxnum v , w \rangle = \int_Y \int_D \Bigl(\theta_0 + \sum_{\substack{\mu \in \mathcal{M} \\ \abs{\mu} < \opApproxnum}} y_\mu\theta_\mu \Bigr) \nabla v(y) \cdot \nabla w(y)\sdd x \sdd\sigma(y) \quad \text{for all $v,w \in \cV$}
\end{equation} 
for $\opApproxnum \in \mathbb N$. A similar truncated operator can also be defined for the log-affine case 
\begin{equation}\label{eq: Btrunclog}
  \langle B_\opApproxnum v , w \rangle = \int_Y \int_D \exp\Bigl(\theta_0 + \sum_{\substack{\mu \in \mathcal{M} \\ \abs{\mu} < \opApproxnum}} y_\mu\theta_\mu \Bigr) \nabla v(y) \cdot \nabla w(y)\sdd x \sdd\sigma(y) \quad \text{for all $v,w \in \cV$.}
\end{equation}
In both cases, we get exponential decay with respect to $\opApproxnum$ of the approximation error as long as the coefficients decay themselves, that is,
\begin{equation}
  \label{eq:multilevel2}
    \sum_{|\mu|=\opApproxnum} \abs{\theta_{\mu}} \leq C_2 2^{-\alpha  \opApproxnum} \quad \text{a.e.~in $D$.}
  \end{equation}
\begin{prop}\label{prop:operator compression}
  There is a constant $C_3>0$ depending on $C_2$, $\alpha$, and in the case of~\eqref{eq: Btrunclog} on $C_B$, such that for all $\opApproxnum \geq 0$,
 \[
  \norm{B - B_\opApproxnum}_{\cV\to\cV'} \leq C_3 2^{-\alpha\opApproxnum}.
 \]
\end{prop}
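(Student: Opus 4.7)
The strategy in both cases is to bound the pointwise (in $x$, $y$) difference of the diffusion coefficients $a(y) - a_n(y)$ uniformly by $C\, 2^{-\alpha n}$, and then to insert this estimate into the definition of $B - B_n$ and conclude via Cauchy--Schwarz in space and Fubini in $y$. The only real content is to quantify the tail of the coefficient expansion; the passage from the coefficient bound to the operator-norm bound is automatic because $B$ and $B_n$ differ only through the diffusion coefficient.

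\textbf{Tail of the coefficient expansion.} First I would observe that assumption~\eqref{eq:multilevel2} together with a geometric series gives
\begin{equation*}
  \sum_{|\mu| \geq n} |\theta_\mu(x)| \;\leq\; \sum_{\ell \geq n} \sum_{|\mu| = \ell} |\theta_\mu(x)| \;\leq\; \sum_{\ell \geq n} C_2\, 2^{-\alpha \ell} \;\leq\; \frac{C_2}{1 - 2^{-\alpha}}\, 2^{-\alpha n}
\end{equation*}
for a.e.\ $x \in D$. In the affine case~\eqref{eq: Btrunclinear}, using $|y_\mu| \leq 1$ this directly yields
\begin{equation*}
  \esssup_{x \in D}\, \sup_{y \in Y}\, |a(x,y) - a_n(x,y)| \;\leq\; \sum_{|\mu| \geq n} |\theta_\mu(x)| \;\lesssim\; 2^{-\alpha n}.
\end{equation*}

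\textbf{Log-affine case.} Here I would factor
\begin{equation*}
  a(y) - a_n(y) \;=\; a_n(y)\, \Bigl( \exp\bigl(R_n(y)\bigr) - 1 \Bigr), \qquad R_n(y) := \sum_{|\mu|\geq n} y_\mu \theta_\mu.
\end{equation*}
The pointwise bound $|R_n(y)| \leq \sum_{|\mu| \geq n} |\theta_\mu| \lesssim 2^{-\alpha n}$ is uniform in $y \in Y$, so for $n$ large enough $|R_n(y)| \leq 1$ and the elementary estimate $|e^t - 1| \leq e\, |t|$ for $|t| \leq 1$ gives $|\exp(R_n(y)) - 1| \lesssim 2^{-\alpha n}$; for the finitely many small~$n$ the bound is absorbed into the constant. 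Since $0 \leq a_n(y) \leq C_B$ uniformly in $x$ and $y$ (by the same argument as in~\eqref{eq: CBlinear}, applied to the truncated sum), we conclude again
\begin{equation*}
  \esssup_{x \in D}\, \sup_{y \in Y}\, |a(x,y) - a_n(x,y)| \;\lesssim\; C_B\, 2^{-\alpha n}.
\end{equation*}

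\textbf{From coefficient to operator bound.} For arbitrary $v, w \in \cV$, Cauchy--Schwarz in $x$ and in $y$ yields
\begin{equation*}
  |\langle (B - B_n) v, w\rangle| \;\leq\; \esssup_{D\times Y}\, |a - a_n| \;\int_Y \int_D |\nabla v(y)|\, |\nabla w(y)| \sdd x \sdd\sigma(y) \;\leq\; C\, 2^{-\alpha n}\, \|v\|_\cV\, \|w\|_\cV,
\end{equation*}
which is the claimed estimate with $C_3$ depending on $C_2$, $\alpha$, and (in the log-affine case) on $C_B$.

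\textbf{Main obstacle.} The only nontrivial step is the log-affine case, where one must ensure that the prefactor $a_n(y)$ stays uniformly bounded and that the linearization $e^{R_n} - 1 \approx R_n$ is valid on all of $Y$; this is handled by the uniform ellipticity assumption (giving $a_n \leq C_B$) together with the already-established uniform smallness of $R_n$. Everything else is bookkeeping.
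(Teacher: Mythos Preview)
Your proof is correct and follows essentially the same route as the paper. The paper's own proof is a two-line remark: it cites prior work for the affine case and, for the log-affine case, simply notes that one picks up an extra factor equal to the Lipschitz constant of $\exp$ on the relevant range, which is $C_B$. Your multiplicative factorisation $a - a_n = a_n(e^{R_n}-1)$ together with $a_n \le C_B$ and $|e^{R_n}-1| \lesssim |R_n|$ is the same estimate written out, just arranged slightly differently; the constants and their dependencies agree.
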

\begin{proof}
  For~\eqref{eq: Btrunclinear} this is~\cite[\propo{3.1}]{bachmayr2024convergent} or~\cite[\propo{3.2}]{BV}. In the case of~\eqref{eq: Btrunclog}, we have an additional factor given by the Lipschitz constant of $\exp$, which equals $C_B$.
\end{proof}

\subsection{Stochastic compression by stochastic truncation}

The truncated operator $B_\opApproxnum$ results from $B$ by replacing $[B]_{\nu\nu'}$ by $0$ if $\nu_\mu\neq \nu'_\mu$ with $\abs\mu\geq \opApproxnum$.
In the affine linear case~\eqref{eq:affinecoeff}, this truncation already suffices, as the stochastic components $[B]_{\nu\nu'}$ are given by the three term recursion~\eqref{eq:threetermrecursion} so that 
only $~2^{d\opApproxnum}$ stochastic components of $B_\opApproxnum$  are nonzero. In the log-affine case~\eqref{eq:loglinearcoeff} however, $[B]_{\nu\nu'}$ vanishes only if $\nu$ and $\nu'$ differs at entries at $\mu$ and $\mu'$ and $\theta_\mu\theta_{\mu'} = 0 $ in $ D$, as will be seen later in \Cref{lem: decay of operator coefficient}. Therefore, applying the truncated operator $B_\opApproxnum$ still leads to infinitely many stochastic components. This makes a more involved compression essential.

We first analyse the situation when only a collection of $A_{\nu\nu'}$ is applied. 
We make use of the following modification of Schur's lemma to estimate the truncation error. 
The proof is given in \Cref{sec:lemmas}.
\begin{lemma}\label{lem: Stoch Schur Lemma}
  Let the operator $B\colon \mathcal V\to \mathcal V'$ be given by
  \[
  \langle Bv, w \rangle = \sum_{\nu\in \mathcal F} \sum_{\nu'\in \mathcal F} \int_D [a]_{\nu\nu'}\nabla [v]_{\nu'}\cdot \nabla [w]_{\nu} \sdd x
  \]
  with $[a]_{\nu\nu'} = [a]_{\nu'\nu}$.
  Then 
  \[
  \norm{B}_{\cV\to \cV'} \leq  \max_{\nu\in \mathcal F}{\Bignorm{\sum_{\nu'\in \mathcal F} \abs{[a]_{\nu\nu'}}}_{L_\infty(D)}}.
  \]
\end{lemma}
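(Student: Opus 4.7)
The statement is a classical Schur-type bound adapted to the semi-discrete stochastic operator: given a kernel that is symmetric in the stochastic indices, the operator norm on $\cV$ is controlled by the pointwise absolute row sum in $L_\infty(D)$. My plan is to imitate the standard proof of Schur's lemma by a double Cauchy--Schwarz, once at the level of the stochastic index sums and once in the spatial integral, using Parseval in the Legendre basis to convert Hilbert sums of components back to $\cV$-norms.

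First I would set $M = \max_{\nu}\bignorm{\sum_{\nu'}\abs{[a]_{\nu\nu'}}}_{L_\infty(D)}$ and fix $v,w\in\cV$ with finite expansions (or work with finite partial sums and pass to the limit). The starting point is the pointwise inequality
\[
\Bigabs{\sum_{\nu,\nu'} [a]_{\nu\nu'}(x)\,\nabla[v]_{\nu'}(x)\cdot\nabla[w]_\nu(x)}
\leq \sum_{\nu,\nu'}\sqrt{\abs{[a]_{\nu\nu'}(x)}}\,\abs{\nabla[w]_\nu(x)}\cdot\sqrt{\abs{[a]_{\nu\nu'}(x)}}\,\abs{\nabla[v]_{\nu'}(x)}.
\]
Applying Cauchy--Schwarz for the sum over $(\nu,\nu')$ then yields pointwise
\[
\Bigl(\sum_{\nu,\nu'}\abs{[a]_{\nu\nu'}(x)}\,\abs{\nabla[w]_\nu(x)}^2\Bigr)^{\!1/2}\Bigl(\sum_{\nu,\nu'}\abs{[a]_{\nu\nu'}(x)}\,\abs{\nabla[v]_{\nu'}(x)}^2\Bigr)^{\!1/2}.
\]

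The key step, and the place where the symmetry assumption enters, is to reorder the summations in the two factors. In the first factor I sum over $\nu'$ first and use $\sum_{\nu'}\abs{[a]_{\nu\nu'}(x)}\leq M$; in the second factor I sum over $\nu$ first and use the analogous bound $\sum_{\nu}\abs{[a]_{\nu\nu'}(x)}\leq M$, which holds because $[a]_{\nu\nu'}=[a]_{\nu'\nu}$ makes the quantity on the right-hand side invariant under swapping the roles of the two indices. This gives the pointwise estimate
\[
\abs{\textstyle\sum_{\nu,\nu'}[a]_{\nu\nu'}(x)\nabla[v]_{\nu'}\cdot\nabla[w]_\nu}\leq M\Bigl(\sum_\nu\abs{\nabla[w]_\nu(x)}^2\Bigr)^{\!1/2}\Bigl(\sum_{\nu'}\abs{\nabla[v]_{\nu'}(x)}^2\Bigr)^{\!1/2}.
\]

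Next I integrate this over $D$ and apply Cauchy--Schwarz in $L_2(D)$. Using the Parseval identity $\|w\|_\cV^2 = \sum_\nu \|[w]_\nu\|_V^2 = \int_D \sum_\nu |\nabla[w]_\nu|^2\sdd x$ from the orthonormality of the product Legendre basis, the two integrated factors become $\|w\|_\cV$ and $\|v\|_\cV$ respectively, yielding $\abs{\langle Bv,w\rangle}\leq M\|v\|_\cV\|w\|_\cV$ and hence the claim by duality. I do not anticipate any real obstacles: the only point requiring care is ensuring that the interchange of sums and integrals is justified (which is handled by first taking $v,w$ with finitely supported Legendre expansions and passing to the limit, and by Tonelli for the nonnegative integrands used above), and verifying that the symmetry of $[a]_{\nu\nu'}$ indeed lets one bound both reorderings by the single quantity $M$.
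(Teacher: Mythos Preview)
Your proof is correct and follows essentially the same Schur-test argument as the paper: Cauchy--Schwarz with the $\sqrt{\abs{[a]_{\nu\nu'}}}$ weight splitting, followed by the symmetry $[a]_{\nu\nu'}=[a]_{\nu'\nu}$ to bound both the row and column sums by $M$. The only cosmetic difference is that the paper estimates $\norm{Bv}_{\cV'}$ directly via the $L_2$ flux representation of the Legendre components of $Bv$, whereas you work through the bilinear form $\langle Bv,w\rangle$ and conclude by duality; the underlying inequalities are the same.
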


We make use of the following approximation result.
\begin{lemma}\label{lem: decay interaction Legendre}
  Let $f:\mathcal E_\rho\to \C$ be holomorphic in the ellipse $\mathcal E_\rho = \bigl\{\frac12(z+z^{-1})\in \C\colon  \abs{z}< \rho \bigr\}$ and continuous up to its boundary. Let $\sigma_1$ be the uniform measure $\sdd \sigma_1(y) =\frac{\sdd y}{2}$. Then  for $k>k'$,
  \[
    \biggabs{\int_{-1}^1 f(y) L_k(y) L_{k'}(y)\sdd \sigma_1(y)} \leq 2 \max_{z\in \mathcal E_\rho}{\abs{f(z)}} \frac{\rho^2}{\rho^2-1} \rho^{-(k-k')}
  \]  
  and 
  \[
    \biggabs{\int_{-1}^1 f(y) L_k(y) L_{k}(y)\sdd \sigma_1(y)} \leq  \max_{z\in [-1,1]}{\abs{f(z)}} .
  \] 
\end{lemma}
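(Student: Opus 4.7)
The plan is to handle the case $k = k'$ immediately by Cauchy--Schwarz, and to attack the case $k > k'$ by expanding $f$ in Chebyshev polynomials and exploiting two orthogonality-type cancellations that cut the resulting series.

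For $k = k'$, the bound $\abs{\int_{-1}^1 f L_k^2 \sdd \sigma_1} \le \max_{[-1,1]} \abs{f}$ is immediate since $\sigma_1$ is a probability measure and $L_k$ is $L^2(\sigma_1)$-normalized.

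For $k > k'$, my first step would be to expand $f$ in Chebyshev polynomials of the first kind, $f = \sum_n a_n T_n$, and apply the classical Bernstein bound $\abs{a_n} \le 2 M \rho^{-n}$ with $M = \max_{\mathcal E_\rho}\abs{f}$, which follows from deforming the defining contour of $a_n$ to the Joukowski image of $\{\abs{z} = \rho\}$. Interchanging sum and integral then reduces the task to estimating
\[
\sum_{n \ge 0} a_n \int_{-1}^1 T_n L_k L_{k'} \sdd \sigma_1 .
\]
Two observations cut this series down to only every second index starting at $k - k'$. First, since $T_n L_{k'}$ is a polynomial of degree $n+k'$, the integral vanishes whenever $n+k' < k$ by $L^2(\sigma_1)$-orthogonality of $L_k$ to polynomials of degree less than $k$. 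Second, the triple product $T_n L_k L_{k'}$ has parity $(-1)^{n+k+k'}$, so the integral vanishes unless $n \equiv k-k' \pmod 2$. For each surviving index, the crude bound $\abs{\int T_n L_k L_{k'} \sdd \sigma_1} \le \norm{T_n}_{L^\infty} \norm{L_k L_{k'}}_{L^1(\sigma_1)} \le 1$ then follows from $\norm{T_n}_{L^\infty} = 1$ and Cauchy--Schwarz applied to $L_k, L_{k'}$.

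The main subtlety will be securing the sharp prefactor $\rho^2/(\rho^2-1)$ rather than the weaker $\rho/(\rho-1)$ that a direct sum over all $n \ge k-k'$ would yield. The parity restriction is precisely what forces the surviving indices to be $\{k-k', k-k'+2, \ldots\}$, so the geometric series has ratio $\rho^{-2}$ rather than $\rho^{-1}$; summing then gives $2 M \rho^{-(k-k')} / (1 - \rho^{-2}) = 2 M \rho^2 (\rho^2-1)^{-1} \rho^{-(k-k')}$, matching the claim.
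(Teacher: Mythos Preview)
Your proposal is correct and follows essentially the same route as the paper: Chebyshev expansion of $f$ with the Bernstein coefficient bound, orthogonality of $L_k$ to lower-degree polynomials to cut off $n<k-k'$, parity to retain only $n\equiv k-k'\pmod 2$, the trivial bound $\bigl|\int T_n L_k L_{k'}\,d\sigma_1\bigr|\le 1$, and summation of the resulting geometric series with ratio $\rho^{-2}$. The $k=k'$ case is handled identically.
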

\begin{proof}
  Since $f$ is holomorphic in $\mathcal E_\rho$, we can expand $f = \sum_{\ell = 0}^\infty f_\ell T_\ell$ as a sum of Chebyshev polynomials with $\abs{f_\ell} \leq 2\max_{z\in\mathcal E_\rho}\abs{f(z)} \rho^{-\ell}$; see e.g.,
  \cite[\theo{8.1}]{Trefethen:20}. Therefore, we have 
  \begin{align*}
    \biggabs{\int_{-1}^1 f(y) L_k(y) L_{k'}(y)\sdd \sigma_1(y)} 
    &=
    \biggabs{\int_{-1}^1 \sum_{\ell = 0}^\infty f_\ell T_\ell(y) L_k(y) L_{k'}(y)\sdd \sigma_1(y)} \\
    &=  \biggabs{\int_{-1}^1 \sum_{\ell = k-k'}^\infty f_\ell T_\ell(y)L_k(y) L_{k'}(y)\sdd \sigma_1(y)} ,
    \end{align*}
where we have used that $L_k$ is orthogonal to polynomials of degree less than $k$. Moreover, as the integral of an odd function vanishes and by the decay of Chebyshev coefficients, 
    \begin{align*}
      \biggabs{\int_{-1}^1 f(y) L_k(y) L_{k'}(y)\sdd \sigma_1(y)} 
    &=  \biggabs{\int_{-1}^1 \sum_{\ell = 0}^\infty f_{k-k'+2\ell} T_{k-k'+2\ell}(y)L_k(y) L_{k'}(y)\sdd \sigma_1(y)} \\
    &\leq \sum_{\ell = 0}^\infty \abs{f_{k-k'+2\ell}}
    \leq 2 \max_{z\in \mathcal E_\rho}{\abs{f(z)}} \frac{\rho^2}{\rho^2-1} \rho^{-(k-k')}.
  \end{align*}
 To obtain the second statement, we observe that
\[
    \biggabs{\int_{-1}^1 f(y) L_k(y) L_{k}(y)\sdd \sigma_1(y)} \leq
    \max_{z\in [-1,1]}{\abs{f(z)}}\int_{-1}^1 L_k(y)^2 \sdd \sigma_1(y) =  \max_{z\in [-1,1]}{\abs{f(z)}} .  \qedhere
\] 
\end{proof}

For further analysis, we show the following disjointness of supports of the functions $\theta_\mu$.


\begin{lemma}\label{lem:DisjointSupport}
  Let \Cref{ass:wavelettheta} be satisfied. 
  Then there exists a number $\sublev$ dependent on $d$ and the hidden constants in \Cref{ass:wavelettheta} and a level assignment $\abs\cdot_\sublev\colon \cM \to \N_0$ satisfying
  \begin{enumerate}[{\rm(i)}] 
    \item \label{ass: same level disjoint support} if $\abs\mu_\sublev = \abs{\mu'}_\sublev$ and $\mu\neq \mu'$, then $\theta_\mu\theta_{\mu'}=0$ in $L_\infty(D)$, 
    \item \label{ass: diff level disjoint support}
    for all $\ell< \abs\mu_\sublev$ there is at most one $\mu'$ with $\ell = \abs{\mu'}_\sublev$ and $\theta_\mu\theta_{\mu'}\neq 0$ in $L_\infty(D)$,
    \item \label{ass: number of levelfunctions} $\displaystyle\#\{\mu\colon \abs\mu_\sublev = \ell\} \lesssim 2^{\frac{d}{\sublev}\ell}$,
    \item \label{ass: coefficient decay}$\displaystyle\sum_{\abs{\mu}=\ell} \abs{\theta_\mu}_\sublev \lesssim 2^{-\frac{\alpha}{\sublev}\ell}$.
    \end{enumerate}
\end{lemma}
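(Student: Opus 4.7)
The plan is to construct a coloring $c\colon \cM\to\{0,\dots,\sublev-1\}$ with $\sublev$ depending only on the hidden constants in \Cref{ass:wavelettheta}, and then set $\abs{\mu}_\sublev := \sublev\abs{\mu} + c(\mu)$.

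The first preparatory step is a uniform local overlap bound: for every $\mu\in\cM$ and every $k' \leq \abs{\mu}$, the number of $\mu' \neq \mu$ with $\abs{\mu'} = k'$ and $\theta_{\mu'}\theta_\mu \neq 0$ is bounded by some constant $C_1$ independent of $\mu$ and $k'$. To see this, enclose $\supp\theta_\mu$ in a ball of radius $\lesssim 2^{-\abs\mu} \leq 2^{-k'}$ via \Cref{ass:wavelettheta}\eqref{ass:waveletthetaDiam}; any overlapping $\mu'$ at level $k'$ meets this ball, so \Cref{ass:wavelettheta}\eqref{ass:waveletthetaLocalization} with $r\eqsim 2^{-k'}$ yields the bound $O(1)$.

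The core combinatorial construction is then as follows. For each $k'\in\N_0$ define a conflict graph $H_{k'}$ on $\{\mu\in\cM\colon \abs{\mu}=k'\}$ by joining $\mu'\neq\mu''$ whenever there exists $\mu\in\cM$ with $\abs{\mu}\geq k'$ satisfying $\supp\theta_\mu\cap\supp\theta_{\mu'}\neq\emptyset$ and $\supp\theta_\mu\cap\supp\theta_{\mu''}\neq\emptyset$ (allowing the witness $\mu\in\{\mu',\mu''\}$ subsumes direct overlap of $\mu',\mu''$). The key geometric estimate is that $H_{k'}$ has degree bounded uniformly in $k'$: since $\diam\supp\theta_\mu\lesssim 2^{-\abs\mu}\leq 2^{-k'}$, a triangle inequality through $\supp\theta_\mu$ places any $H_{k'}$-neighbor $\mu''$ of $\mu'$ within distance $\lesssim 2^{-k'}$ of a fixed point in $\supp\theta_{\mu'}$, and \Cref{ass:wavelettheta}\eqref{ass:waveletthetaLocalization} again gives $O(1)$ such $\mu''$. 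Fix $\sublev := \Delta+1$ for this common degree bound $\Delta$, and greedy-color each $H_{k'}$ to obtain $c$.

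With $\abs{\mu}_\sublev := \sublev\abs\mu + c(\mu)$, the four assertions reduce to unpacking the definition. For~\eqref{ass: same level disjoint support}, two distinct indices with equal $\abs{\cdot}_\sublev$ share both an old level $k'$ and a color, and a direct overlap would be a forbidden $H_{k'}$-edge. For~\eqref{ass: diff level disjoint support}, two distinct $\mu',\mu''$ at a new level $\ell<\abs\mu_\sublev$ both overlapping $\mu$ share an old level $k'\leq\abs\mu$ and a color, yet $\mu$ is then a valid $H_{k'}$-witness, yielding the same contradiction. For~\eqref{ass: number of levelfunctions}, $\#\{\abs\mu_\sublev=\ell\}\leq \#\{\abs\mu=\floor{\ell/\sublev}\}\lesssim 2^{d\floor{\ell/\sublev}}\leq 2^{d\ell/\sublev}$. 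For~\eqref{ass: coefficient decay}, \eqref{ass: same level disjoint support} makes the supports at a common new level pairwise disjoint, so $\sum_{\abs\mu_\sublev=\ell}\abs{\theta_\mu(x)}$ has at most one nonzero term of size $\lesssim 2^{-\alpha\floor{\ell/\sublev}}\lesssim 2^{-\alpha\ell/\sublev}$.

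The principal obstacle is the bounded-degree claim for $H_{k'}$. It is essential that the witness $\mu$ is restricted to old levels $\geq k'$, mirroring the constraint $\ell<\abs\mu_\sublev$ in~\eqref{ass: diff level disjoint support}; a coarser witness could tie together $\Omega(2^{d(k'-\abs\mu)})$ indices at level $k'$ through a single large support, destroying any uniform degree bound and with it the whole construction.
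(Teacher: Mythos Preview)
Your proof is correct and takes a genuinely different route from the paper's. The paper argues geometrically: after normalizing so that $\diam\supp\theta_\mu\leq 2^{-|\mu|}$, it covers $D$ at each level $\ell$ by balls of radius $r_\ell=2^{-\ell}\sqrt d$ centered at the lattice $2^{-\ell}\Z^d$ and observes that functions whose supports meet balls at lattice points $x,y$ with $\|x-y\|\geq 3r_\ell$ must be disjoint; the $(3d)^d$ cosets of $2^{-\ell}(3d\Z)^d$ then provide the sublevels, with~(ii) ``achieved similarly.'' You instead build, at each old level $k'$, a conflict graph $H_{k'}$ that already encodes both the same-level disjointness needed for~(i) and the cross-level uniqueness needed for~(ii) through the witness $\mu$ at level $\geq k'$, and reduce everything to a bounded-degree estimate followed by greedy coloring.

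Your approach buys a cleaner and more self-contained treatment of~(ii): the witness mechanism makes the required uniqueness an immediate consequence of the coloring, whereas in the lattice picture one still has to argue that the coset structure respects overlaps coming from finer levels. The paper's argument, on the other hand, yields an explicit value $Q=(3d)^d$ (up to a further splitting to handle multiple functions per ball), while yours gives $Q=\Delta+1$ with $\Delta$ determined implicitly by the constants in \Cref{ass:wavelettheta}. Your remark that the restriction $|\mu|\geq k'$ on the witness is essential for the bounded-degree estimate is the key insight that makes the graph-coloring route work and neatly matches the asymmetry $\ell<|\mu|_\sublev$ in~(ii).
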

\begin{proof}
  Without loss of generality, assume that 
  $\diam\supp\theta_\mu\leq 2^{-|\mu|}$. Let $r_\ell = 2^{-\ell}\sqrt{d}$. Then 
  \[
  D  =  \bigcup_{x \in 2^{-\ell} \Z^d } B(x,r_\ell )\cap D.
  \]
  Also, for $x,y\in 2^{-\ell} \Z^d$ with $\norm{x-y}\geq 3 r_\ell$ and $\abs{\mu} = \abs{\mu'} = \ell$ with $\supp\theta_\mu\cap B(x, r_j)\neq \emptyset$, $\supp\theta_{\mu'}\cap B(y, r_j)\neq \emptyset$ we have $\theta_\mu\theta_{\mu'} = 0$. This condition is satisfied for $x,y\in 2^{-\ell} (3d\Z)^d$, and thus we can achieve~\eqref{ass: same level disjoint support} with $3^dd^d$ sublevels. The point~\eqref{ass: diff level disjoint support} is also achieved similarly.
\end{proof}

\begin{remark}
  We provide convergence rates that depend only on the ratio $\alpha / d$. Thus, for the asymptotic rate, the value of $\sublev$ in \Cref{lem:DisjointSupport} does not enter. It does, however, enter into the corresponding constants.
\end{remark}
For further approximation and complexity estimation, it makes sense to further categorize the differences $\nu - \nu'$.

\begin{definition}\label{def: level indices}
  Let 
  \[\mathcal F(\N_0) = \{ \mathbf k = (k_\ell)_{\ell\in \N_0}\colon k_\ell\in \N_0 \text{ and } k_\ell = 0 \text{ for all but finitely many $
  \ell \in \N_0$}\}
  \] 
  be the set of sequences with values in the natural numbers with only finitely many nonzero entries. Then we define the set-valued functions
\[
    \mathcal S_\mathbf k(\nu) = \Bigl\{\nu'\in \mathcal F\colon \sum_{\abs{\mu}=\ell} \abs{\nu_\mu-\nu'_\mu} = k_\ell, \prod_{\mu\colon \nu_\mu- \nu'_\mu\neq 0} \theta_\mu \neq 0 \in L_\infty(D)\Bigr\}.
\] Here, the empty product equals one, so that $\mathcal S_\mathbf 0(\nu) = \{\nu \}$.
\end{definition}

\begin{lemma} \label{lem: decay of operator coefficient} Let \Cref{ass:wavelettheta} be satisfied, $\sublev$ be from \Cref{lem:DisjointSupport}, and the diffusion coefficient given by~\eqref{eq:loglinearcoeff}. Then $[a]_{\nu\nu'}\neq 0$ implies $\nu'\in \mathcal S_\mathbf k(\nu)$ for some $\mathbf k \in \mathcal F(\N_0)$. Moreover, for any $\nu\in \mathcal F$,  $0<\alpha'< \alpha$ and $\rho>1$, we get the estimate
\begin{equation}\label{eq:CoefficientSumEstimate}
  \sum_{\nu'\in \mathcal S_\mathbf k(\nu) } \abs{ [a]_{\nu\nu'}} \lesssim  
  4^{\# \{\ell\colon k_\ell > 0\}} \prod_{\ell\in \N_0} (\rho 2^{\frac{\alpha'}{\sublev}\ell})^{-k_\ell}
\end{equation}
in $D$ with constants dependent on $\alpha', \alpha$, $\rho$, and $\sublev$,
and 
\begin{equation}\label{eq: number of coefficients}
\#\mathcal S_\mathbf k(\nu) \lesssim 2^{\# \{\ell\colon k_\ell > 0\}} 2^{\frac{d}{\sublev}\max \{\ell\colon k_\ell > 0\}}.\end{equation}

\end{lemma}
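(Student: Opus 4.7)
The plan is to prove the three assertions in sequence, each leveraging the product structure of the log-affine coefficient together with the disjoint-support decomposition of \Cref{lem:DisjointSupport}.

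First I would establish the claim that $[a]_{\nu\nu'}\neq 0$ implies $\nu'\in \mathcal S_\mathbf k(\nu)$ for appropriate $\mathbf k$ by a direct factorisation: since $a(x,y) = e^{\theta_0(x)}\prod_\mu e^{y_\mu \theta_\mu(x)}$ and $L_\nu L_{\nu'} = \prod_\mu L_{\nu_\mu} L_{\nu'_\mu}$, Fubini yields
\[
 [a]_{\nu\nu'}(x) = e^{\theta_0(x)} \prod_{\mu \in \mathcal M} \int_{-1}^1 e^{y\theta_\mu(x)} L_{\nu_\mu}(y) L_{\nu'_\mu}(y) \sdd\sigma_1(y).
\]
Whenever $\theta_\mu(x)=0$, the $\mu$th integral reduces to $\delta_{\nu_\mu,\nu'_\mu}$ by Legendre orthonormality. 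Hence at any point $x$ with $[a]_{\nu\nu'}(x)\neq 0$, every index $\mu$ with $\nu_\mu\neq \nu'_\mu$ satisfies $\theta_\mu(x)\neq 0$, so $\prod_{\mu: \nu_\mu \neq \nu'_\mu}\theta_\mu$ is nonzero in $L_\infty(D)$, proving the first claim with $k_\ell=\sum_{|\mu|=\ell}|\nu_\mu-\nu'_\mu|$ (which has finite support since $\nu,\nu'\in\mathcal F$).

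Next I would produce a pointwise bound by applying \Cref{lem: decay interaction Legendre} to each entire factor $y\mapsto e^{y\theta_\mu(x)}$ with the level-adapted radius $\tilde\rho_\ell = \rho\,2^{\alpha'\ell/\sublev}$ for $\ell = |\mu|$. For $\nu_\mu\neq \nu'_\mu$ this yields a decay factor $\tilde\rho_\ell^{-|\nu_\mu-\nu'_\mu|}$ and a prefactor of order $\tfrac{\tilde\rho_\ell^2}{\tilde\rho_\ell^2-1}\exp\bigl(\tfrac{\tilde\rho_\ell+\tilde\rho_\ell^{-1}}{2}|\theta_\mu(x)|\bigr)$, while the diagonal terms contribute at most $e^{|\theta_\mu(x)|}$. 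Using \Cref{lem:DisjointSupport} (at each $x$, at most one $\mu$ per sublevel is active), \Cref{ass:wavelettheta}\eqref{ass:waveletthetaDecay}, and the condition $\alpha'<\alpha$, the series $\sum_\mu \tilde\rho_{|\mu|}|\theta_\mu(x)|$ converges uniformly in $x$, so the product of all exponential prefactors is bounded by a constant $C(\rho,\alpha,\alpha',\sublev)$. This yields the pointwise bound
\[
 |[a]_{\nu\nu'}(x)| \lesssim C_\rho^{n(\nu,\nu')} \prod_\ell (\rho\, 2^{\alpha'\ell/\sublev})^{-k_\ell}, \qquad n(\nu,\nu')=\#\{\mu: \nu_\mu \neq \nu'_\mu\},
\]
with $C_\rho = \tfrac{2\rho^2}{\rho^2-1}$.

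For the sum bound \eqref{eq:CoefficientSumEstimate}, since $\prod_\ell(\rho\,2^{\alpha'\ell/\sublev})^{-k_\ell}$ depends only on $\mathbf k$, it remains to bound $\sum_{\nu'\in\mathcal S_\mathbf k(\nu)} C_\rho^{n(\nu,\nu')}$. The disjoint-support constraint caps the number of active $\mu$ at each level by $\sublev$, so the sum factorises over levels. At each active level $\ell$ I enumerate the $(d_\mu)$ positive compositions of $k_\ell$ into at most $\sublev$ parts ($\binom{k_\ell-1}{n_\ell-1}$ such, with two sign choices per nonzero $d_\mu$), yielding a combinatorial factor polynomial in $k_\ell$; this factor is absorbed against the geometric decay via the elementary estimate $p(k)\, r^{-k} \lesssim ((1+\varepsilon)/r)^k$, leaving a uniform per-level contribution that I collect into the claimed $4^{\#\{\ell: k_\ell>0\}}$. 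For the cardinality bound \eqref{eq: number of coefficients}, the same level-wise enumeration applies: once the top-level index $\mu^\star$ at $\ell_{\max}=\max\{\ell: k_\ell>0\}$ is fixed (with $\lesssim 2^{d\ell_{\max}/\sublev}$ choices from \Cref{lem:DisjointSupport}\eqref{ass: number of levelfunctions}), \Cref{lem:DisjointSupport}\eqref{ass: diff level disjoint support} restricts every lower active level to a uniformly bounded set of admissible indices with compatible supports, producing the stated estimate.

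The main obstacle is the combinatorial step in the sum: the stars-and-bars count of positive compositions of $k_\ell$ grows polynomially in $k_\ell$, and must be shown to be tamed by $(\rho\,2^{\alpha'\ell/\sublev})^{-k_\ell}$. The remedy is standard---trade a small margin of the geometric decay against the polynomial prefactors---but careful bookkeeping is required to ensure that the per-level overhead remains uniform across all levels and collapses into the advertised factor $4^{\#\{\ell: k_\ell>0\}}$, while the dependence of the absorbed constants on $\rho$, $\alpha$, $\alpha'$, and $\sublev$ is tracked and placed into the implicit constant.
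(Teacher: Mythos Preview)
Your treatment of the first assertion and of the cardinality bound \eqref{eq: number of coefficients} matches the paper's proof. The issue is in your argument for the sum bound \eqref{eq:CoefficientSumEstimate}.

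You claim that the disjoint-support constraint caps the number of active $\mu$ at each level by $\sublev$, and you then enumerate compositions of $k_\ell$ into up to $\sublev$ parts. This mis-reads \Cref{lem:DisjointSupport}. After passing to the level assignment $|\cdot|_\sublev$ (which the paper does as its first step, and which is implicit in the exponents $\alpha'/\sublev$ and $d/\sublev$ in the statement), part \eqref{ass: same level disjoint support} says that functions at the \emph{same} $\sublev$-level have disjoint supports. Hence if $\prod_{\mu:\nu_\mu\neq\nu'_\mu}\theta_\mu\neq 0$, there is \emph{at most one} active $\mu$ per $\sublev$-level, not $\sublev$ many. In particular $k_\ell=|\nu_{\mu_\ell}-\nu'_{\mu_\ell}|$ for a single $\mu_\ell$, and there is no composition to enumerate. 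You in fact use precisely this one-per-level structure in your cardinality argument (where you correctly invoke \Cref{lem:DisjointSupport}\eqref{ass: diff level disjoint support}); you should apply the same observation to the sum.

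Concretely: at a fixed $x\in D$, by \eqref{ass: same level disjoint support} only the unique $\mu_\ell$ with $\theta_{\mu_\ell}(x)\neq 0$ at each $\sublev$-level $\ell$ can contribute, so among all $\nu'\in\mathcal S_{\mathbf k}(\nu)$ at most $2^{\#\{\ell:k_\ell>0\}}$ (the sign choices $\nu'_{\mu_\ell}=\nu_{\mu_\ell}\pm k_\ell$) give $[a]_{\nu\nu'}(x)\neq 0$. Combining this with the pointwise bound from \Cref{lem: decay interaction Legendre}, which itself carries a factor $2^{\#\{\ell:k_\ell>0\}}$, gives the stated $4^{\#\{\ell:k_\ell>0\}}$ directly, with no need to trade geometric decay against polynomial prefactors. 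Your stars-and-bars route, by contrast, would bound a sum of uniform constants over all of $\mathcal S_{\mathbf k}(\nu)$, picking up an unwanted factor $\sim 2^{d\ell_{\max}/\sublev}$ unless you separately exploit the disjointness of supports across different top-level choices; and the absorption step you sketch would alter the $\rho$ in the bound rather than produce the advertised constant $4$.
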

\begin{proof}
Without loss of generality, we assume $\abs{\cdot} = \abs{\cdot}_\sublev$ from \Cref{lem:DisjointSupport}.
Recall that the diffusion coefficient $[a]_{\nu\nu'}$ is given by 
\[
[a]_{\nu\nu'} = \prod_{\mu \in \mathcal M} \int_{-1}^1 \exp(y_\mu\theta_\mu) L_{\nu_\mu}(y_\nu)
L_{\nu'_\mu}(y_\nu) \sdd \sigma_1(y_\mu).
\]
Now let $\nu_\mu-\nu'_\mu\neq 0 $ for some $\mu\in \mathcal M$ and $x\notin \supp \theta_\mu$. By orthogonality of the Legendre polynomials it follows that 
$x\notin \supp [a]_{\nu\nu'}$. As a first consequence,
$[a]_{\nu\nu'}= 0 $ if $\nu'\notin \bigcup_{\mathbf k\in \mathcal F(\N_0)} \mathcal S_\mathbf k(\nu)$ by definition of $\mathcal N_k(\nu)$. Furthermore, we note the following property of $\nu'\in \mathcal S_\mathbf k(\nu)$: let $L = \max \{\ell\colon k_\ell>0 \}$ and let $\nu_\mu-\nu'_\mu\neq 0$ with $\abs{\mu}=L$. Then the other entries of $\nu-\nu'$ are determined up to sign due to \Cref{lem:DisjointSupport}\eqref{ass: diff level disjoint support}. This directly gives \eqref{eq: number of coefficients} due to \Cref{lem:DisjointSupport}\eqref{ass: number of levelfunctions}. Also, different choices of $\mu$ with $\abs{\mu}= L$ lead to disjoint supports due to \Cref{lem:DisjointSupport}\eqref{ass: same level disjoint support}. 

It remains to provide the estimate~\eqref{eq:CoefficientSumEstimate}.
Let $\mu$ with $\abs{\mu} = L$ and $\nu_\mu- \nu'_\mu\neq 0$. Then $[a]_{\nu\nu'}= 0$ on $D\setminus\supp \theta_{\mu}$, and on $\supp \theta_{\mu}$ we have the estimates
\begin{multline*}
  \biggabs{\int_{-1}^1 \exp(y_{\mu'}\theta_{\mu'}) L_{\nu_{\mu'}}(y_{\mu'})
  L_{\nu'_{\mu'}}(y_{\mu'}) \sdd \sigma_1(y_{\mu'})} 
  \\
  \leq \begin{cases}
    1 & \text{if  $\theta_{\mu'}\theta_\mu = 0 $ in $L_\infty(D)$,}\\
    \exp\Bigl(\frac{\theta_{\mu'}}2(\rho_{\mu'}+\rho_{\mu'}^{-1})\Bigr) \frac{\rho_{\mu'}^2}{\rho_{\mu'}^2 -1} 
    &\text{if $\nu_{\mu'} - \nu'_{\mu'} =0$, }\\
    2 \exp\Bigl(\frac{\theta_{\mu'}}2(\rho_{\mu'}+{\rho_{\mu'}^{-1}})\Bigl) \frac{\rho_{\mu'}^2}{\rho_{\mu'}^2 -1} \rho^{-\abs{\nu_{\mu'}-\nu'_{\mu'}}}
    &\text{else, }
  \end{cases}
\end{multline*}
due to \Cref{lem: decay interaction Legendre}.
Setting $\rho_{\mu'} = \rho 2^{\alpha' \abs{\mu'}}$, we get 
\begin{align*}
  \abs{[a]_{\nu\nu'}}
  &\leq
  2^{\# \{\ell\colon k_\ell > 0\}}
  \prod_{\ell = 0}^\infty \exp(c \rho 2^{-(\alpha - \alpha')\ell}) \frac{\rho^2 2^{2\alpha'\ell}}{\rho^2 2^{2\alpha'\ell}-1}(\rho 2^{\alpha'\ell})^{-k_\ell}\\
  &=
  2^{\# \{\ell\colon k_\ell > 0\}} \exp
    \sum_{\ell = 0}^\infty \Bigl(c \rho 2^{-(\alpha - \alpha')\ell} - 
    \ln(1 - \rho^{-2} 2^{-2\alpha'\ell})
  \Bigr)
  \prod_{\ell = 0}^\infty (\rho 2^{\alpha'\ell})^{-k_\ell}
  ,
\end{align*}
where $c$ is the hidden constant in \Cref{lem:DisjointSupport}\eqref{ass: coefficient decay} 
and by Lipschitz continuity of the logarithm,
\[
  \sum_{\ell = 0}^\infty \Bigl(c \rho 2^{-(\alpha - \alpha')\ell} - 
  \ln\bigl(1 - \rho^{-2} 2^{-2\alpha'\ell}\bigr)\Bigr)
  \leq c\rho \frac1{1 - 2^{-(\alpha-\alpha')}} + \frac{1}{(\rho^2-1)(1-2^{-2\alpha'})} .
\]
The estimate~\eqref{eq:CoefficientSumEstimate} follows by summation and taking disjoint supports into account.
\end{proof}

The combination of \Cref{lem: Stoch Schur Lemma} and \Cref{lem: decay of operator coefficient} suggest to approximate the operator $B$ by 
\begin{equation}\label{eq: approx operator}
  \langle B_\mathcal Kv, w\rangle = \sum_{\mathbf k\in \mathcal K}\sum_{\nu\in \mathcal F}\sum_{ \nu'\in \mathcal S_\mathbf k(\nu) } 
  \int_D [a]_{\nu\nu'} \nabla v_{\nu'}\cdot \nabla w_{\nu} \sdd x
\end{equation}
with $\mathcal K \subset \mathcal F(\N_0)$. An application of this operator to $L_\nu(y) v_\nu(x)$ results in a functional consisting of at most
$\sum_{\mathbf k \in \mathcal K} \# \mathcal S_\mathbf k(\nu)$ summands of product Legendre polynomials with coefficients in $V'$. With $Q$ from \Cref{lem:DisjointSupport}, we thus assign to each $\mathbf k\in \mathcal F(\N_0)$ the number
\begin{equation}
\label{eq: cost of coefficient}
\cost(\mathbf k) = \#\mathcal S_\mathbf k(\nu) \lesssim 2^{\# \{\ell\colon k_\ell > 0\}} 2^{\frac{d}{\sublev}\max \{\ell\colon k_\ell > 0\}}.
\end{equation}
The approximation error is controlled by \Cref{lem: Stoch Schur Lemma} and \eqref{eq:CoefficientSumEstimate}. We are now interested in estimates of the decay
\[
\min_{\sum_{\mathbf k\in\mathcal K} \cost(\mathbf k) \leq N} \norm{B-B_\mathcal K}_{\cV \to \cV'}
\]
with respect to $N$. To this end, we use the following variant of Stechkin's Lemma. Its proof is given in \Cref{sec:lemmas}.
\begin{lemma}
\label{lem: Stechkin}
Let $(a_i)_{i\in\N}\subset \R_+$ be nonincreasing, and let $b_i>0$. Then for $0<p<1$,
\[
     \Bigl(\sum_{i =   k+1}^\infty a_i b_i \Bigr)
    \leq
    \Bigl(\sum_{i\in \N} a_i^p b_i\Bigr)^\frac1p
    \Bigl(\sum_{i=1 }^k b_i\Bigr)^{1-\frac1p}.
\] 
\end{lemma}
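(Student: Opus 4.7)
The plan is to adapt the classical Stechkin argument to the setting with positive weights $b_i$, exploiting the monotonicity of $(a_i)$ and the fact that $p < 1$ makes $1-p$ positive so that exponents behave as needed.

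First I would extract pointwise control on the cut-off value $a_{k+1}$. Since $(a_i)$ is nonincreasing, $a_{k+1}^p \leq a_i^p$ for all $i \leq k$, so
\[
a_{k+1}^p \sum_{i=1}^{k} b_i \;\leq\; \sum_{i=1}^{k} a_i^p b_i \;\leq\; \sum_{i\in\N} a_i^p b_i,
\]
which after taking $p$-th roots gives $a_{k+1} \leq \bigl(\sum_i a_i^p b_i\bigr)^{1/p} \bigl(\sum_{i=1}^k b_i\bigr)^{-1/p}$.

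Next I would handle the tail sum by splitting $a_i = a_i^{1-p}\,a_i^{p}$. Because $1-p > 0$ and $a_i \leq a_{k+1}$ for $i \geq k+1$, we have $a_i^{1-p} \leq a_{k+1}^{1-p}$, whence
\[
\sum_{i=k+1}^{\infty} a_i b_i \;=\; \sum_{i=k+1}^{\infty} a_i^{1-p} a_i^{p} b_i \;\leq\; a_{k+1}^{1-p}\sum_{i\in\N} a_i^p b_i.
\]
Substituting the bound on $a_{k+1}$ from the previous step and simplifying the exponents then yields
\[
\sum_{i=k+1}^{\infty} a_i b_i \;\leq\; \Bigl(\sum_{i\in\N} a_i^p b_i\Bigr)^{1/p} \Bigl(\sum_{i=1}^{k} b_i\Bigr)^{1-1/p},
\]
which is the claimed inequality.

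There is no genuine obstacle here: both estimates are one-line consequences of monotonicity, and the hypothesis $p < 1$ is used twice, first to guarantee $1-p > 0$ so the inequality $a_i^{1-p} \leq a_{k+1}^{1-p}$ goes in the right direction, and second implicitly through the fact that the exponent $1 - 1/p$ on the RHS is negative (as expected, since the RHS should shrink with $k$). The only point worth double-checking when writing up the proof is the trivial case $\sum_{i=1}^k b_i = 0$ (empty if $k=0$ and all $b_i$ positive makes this nonzero for $k \geq 1$), which does not occur under the standing assumption $b_i > 0$ and $k \geq 1$.
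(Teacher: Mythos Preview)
Your argument is correct: the two-step estimate (first bounding $a_{k+1}$ via $\sum_{i\le k} a_i^p b_i$, then splitting $a_i = a_i^{1-p}a_i^p$ in the tail and using $a_i^{1-p}\le a_{k+1}^{1-p}$) is exactly the discrete weighted Stechkin lemma, and the exponent bookkeeping is right.

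The paper proves the same inequality by a rather different, more analytic route. It passes to step functions $a(x)$, $b(x)$ and the cumulative weight $B(x)=\int_0^x b$, rewrites the tail as $\int_{B(k)}^\infty a(B^{-1}(y))\,dy/B(k)$, splits this into blocks of length $B(k)$, uses the $\ell^p$-subadditivity $\sum c_n \le (\sum c_n^p)^{1/p}$, and then compares the resulting Riemann sum back to the integral $\int a^p b$. Your purely discrete argument is shorter and avoids the change of variables and the Riemann-sum comparison entirely; the paper's version has the flavor of a continuous rearrangement proof, which perhaps generalizes more readily to other settings, but for the statement as given your approach is the more direct one.
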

We want to apply \Cref{lem: Stechkin} to sequences indexed by $\mathcal F(\N_0)$ with certain decay conditions. As an upper bound we use the following lemma.

\begin{lemma}\label{lem: summability of reference sequence}
Let $0<\alpha$, $d\in\N_0$ and  $\rho >1$. Furthermore, for $\mathbf k \in \mathcal F(\N_0)$ let 
\[
a_\mathbf k = 2^{-d\max \{\ell\colon k_\ell > 0\}}
\prod_{\ell\in \N_0} \bigl(\rho 2^{\alpha \ell}\bigr)^{-k_\ell}\quad \text{and}
\quad
b_\mathbf k = 2^{\# \{\ell\colon k_\ell > 0\}}2^{d\max \{ \ell\colon k_\ell > 0\}}
\]
Then for $p>0$ satisfying $\frac 1p -1 < \frac\alpha d$, we have 
\begin{equation}\label{eq: summability of ref sequence}
  \sum_{\mathbf k\in \mathcal F(\N_0)} a_\mathbf k^p b_\mathbf k
  \leq 
  1 + \rho^{-p}
  \exp\biggl(
    \frac{\rho^{-p} (2 -\rho^{-p})}{(1- \rho^{-p})(1-2^{-\alpha  p })}\biggr)
    \frac{1}{1 - 
    2^{( d (1-p) -\alpha p )}}
  <\infty.
\end{equation}
Let $\mathbf k_i$ be an ordering of $\mathcal F(\N_0)$ such that $a_{\mathbf k_i}\geq a_{\mathbf k_{i+1}}$ for all $i \in \N$. 
Then if $d>0$
\begin{equation}\label{eq: cost growth ref sequence}
b_{\mathbf k_{i+1}}\leq 2^d \max_{j\leq i}b_{\mathbf k_{i+1}}.
\end{equation}
\end{lemma}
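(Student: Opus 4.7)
The statement splits into a summability estimate for $\sum_\mathbf k a_\mathbf k^p b_\mathbf k$ and a control on how $b_{\mathbf k_i}$ can grow along any non-increasing $a$-ordering. Both rest on the observation that $a_\mathbf k$ and $b_\mathbf k$ depend on $\mathbf k$ only through the two level statistics $L(\mathbf k) = \max\{\ell:k_\ell>0\}$ and $S(\mathbf k) = \#\{\ell:k_\ell>0\}$ together with the individual $k_\ell$, which makes everything factor multiplicatively across levels.

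For \eqref{eq: summability of ref sequence}, I would set $u_\ell = (\rho 2^{\alpha\ell})^{-p}$, write $a_\mathbf k^p b_\mathbf k = 2^{d(1-p) L(\mathbf k)}\, 2^{S(\mathbf k)} \prod_\ell u_\ell^{k_\ell}$, isolate the contribution $1$ coming from $\mathbf k = \mathbf 0$, and group the remaining terms by the value of $L$. For each fixed $L$, the sum factorizes via the geometric identities $\sum_{k\geq 0} 2^{[k>0]} u_\ell^k = \frac{1+u_\ell}{1-u_\ell}$ (for $\ell<L$) and $\sum_{k\geq 1} 2 u_L^k = \frac{2u_L}{1-u_L}$, yielding
\begin{equation*}
\sum_{\mathbf k \neq \mathbf 0} a_\mathbf k^p b_\mathbf k = \sum_{L=0}^\infty 2^{d(1-p)L}\,\frac{2u_L}{1-u_L}\prod_{\ell=0}^{L-1}\frac{1+u_\ell}{1-u_\ell}.
\end{equation*}
Extending the finite product to $\ell = \infty$ and applying the elementary inequality $\log\frac{1+x}{1-x} \leq \frac{x(2-x)}{1-x}$ for $x \in [0,1)$ together with $\sum_\ell u_\ell = \rho^{-p}/(1-2^{-\alpha p})$ produces the claimed exponential constant. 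The remaining sum in $L$ is geometric with ratio $2^{d(1-p)-\alpha p}$; convergence is equivalent to $\frac{1}{p}-1<\frac{\alpha}{d}$ and produces the factor $1/(1-2^{d(1-p)-\alpha p})$.

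For \eqref{eq: cost growth ref sequence}, I would show that every $\mathbf k \neq \mathbf 0$ admits a predecessor $\mathbf k^\circ$ with $a_{\mathbf k^\circ} > a_\mathbf k$ (so that $\mathbf k^\circ$ must appear strictly earlier in any non-increasing $a$-ordering) and $b_{\mathbf k^\circ} \geq 2^{-d} b_\mathbf k$. A short case split suffices. If some $k_{\ell^*} \geq 2$, decrementing that component preserves both $L$ and $S$, hence $b$, while multiplying $a$ by $\rho 2^{\alpha\ell^*} > 1$. If $\mathbf k \in \{0,1\}^{\N_0}$ with $S \geq 2$, removing an indicator at any $\ell' \neq L$ keeps $L$ fixed and halves $2^S$, so $b_{\mathbf k^\circ} = \tfrac{1}{2} b_\mathbf k \geq 2^{-d} b_\mathbf k$ using $d \geq 1$. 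In the remaining case $\mathbf k = e_L$, I would choose $\mathbf k^\circ = e_{L-1}$ for $L \geq 1$, giving $a_{\mathbf k^\circ}/a_\mathbf k = 2^{d+\alpha}$ and $b_{\mathbf k^\circ} = 2^{-d} b_\mathbf k$ exactly, and $\mathbf k^\circ = \mathbf 0$ for $L = 0$, again using $d \geq 1$.

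The bookkeeping in Part 1 is the most subtle piece: the algebraic identity $1 + 2v/(1-v) = (1+v)/(1-v)$ is precisely what allows the combinatorial factor $2^S$ and the level-wise geometric sums to combine into an infinite product with a uniform exponential majorant, and the sharper inequality $\log\frac{1+x}{1-x} \leq \frac{x(2-x)}{1-x}$ (rather than the cruder $2x/(1-x)$) is required to reproduce the displayed constant. Part 2 is conceptually direct once the ``$b$-halving move'' has been identified, and the worst case $\mathbf k = e_L$ shows that the factor $2^d$ is sharp.
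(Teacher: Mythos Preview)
Your proposal is correct and follows essentially the same route as the paper: for \eqref{eq: summability of ref sequence} both arguments group the nonzero $\mathbf k$ by $L=\max\{\ell:k_\ell>0\}$, factorize level by level via $1+\tfrac{2u_\ell}{1-u_\ell}=\tfrac{1+u_\ell}{1-u_\ell}$, bound the resulting product by an exponential using $u_\ell\le\rho^{-p}$, and sum the remaining geometric series in $L$; for \eqref{eq: cost growth ref sequence} both exhibit, by a short case split, a predecessor $\mathbf k^\circ$ with $a_{\mathbf k^\circ}>a_{\mathbf k}$ and $b_{\mathbf k^\circ}\ge 2^{-d}b_{\mathbf k}$. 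The only cosmetic differences are that the paper organizes the Part~2 cases by the values of $k_L$ and $k_{L-1}$ rather than by ``some $k_{\ell^*}\ge 2$'', and that your treatment explicitly covers $\mathbf k=e_0$.
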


\begin{proof}
  We treat the factor $2^{d\#\max\{\ell: k_\ell>0\}}$ by introducing the sets
  \[
  \mathcal F(\N_0, L) = \{k\in \mathcal F(\N_0)\colon \text{$k_\ell = 0$ for $\ell>L$}\}
  \]
  and noting that 
  \[
    \mathcal F(\N_0) = \bigcup_{L = 0}^\infty
    \{ \mathbf k + e_L\colon 
      \mathcal F(\N_0, L)\}.
  \]
  We also use the identity
  \[
  \prod_{\ell = 0}^L\biggl( \sum_{k=0}^\infty a_{k,\ell} + \sum_{k=1}^\infty a_{k,\ell} \biggr)
  =
  \sum_{\mathbf k \in \mathcal F(\N_0,L)} 2^{\#\{\ell\colon k_\ell>0\}}\prod_{\ell = 0}^L
  a_{k_\ell,\ell}
  \]
  for general products of sequences.
We get 
  \begin{align*}
  \sum_{\mathbf k\in \mathcal F(\N_0)}
  a_\mathbf k^p b_\mathbf k
  &= a_\mathbf 0^p b_\mathbf 0 + 
  \sum_{L = 0}^\infty
  \sum_{\mathbf k\in \mathcal F(\N_0,L)}
  a_{\mathbf k + e_L}^p b_{\mathbf k + e_L}\\
  &= 1 + 
  \sum_{L = 0}^\infty
 \rho^{-p} 2^{-\alpha L p }
  2^{dL (1-p) }
  \sum_{\mathbf k \in \mathcal F(\N_0,L)}
  2^{\# \{\ell\colon k_\ell > 0\}}
  \prod_{\ell=0}^L (\rho^p 2^{\alpha'\ell p })^{-k_\ell}\\
  &= 1 + 
  \sum_{L = 0}^\infty
  \rho^{-p} 2^{-\alpha L p }
  2^{dL (1-p) }
  \prod_{\ell=0}^L 
  \biggl(
  \sum_{k =0}^\infty
  (\rho^p 2^{\alpha \ell p })^{-k}
  +
  \sum_{k =1}^\infty
  (\rho^p 2^{\alpha \ell p })^{-k}
  \biggr)\\
  &=
  1 + 
  \rho^{-p}
  \sum_{L = 0}^\infty
   2^{-\alpha L p + dL (1-p) }
  \prod_{\ell=0}^L 
  \frac{1+\rho^{-p} 2^{-\alpha \ell p }}{1- \rho^{-p} 2^{-\alpha \ell p }},
  \end{align*}
and the product in the last line is bounded 
  \begin{align*}
    \prod_{\ell=0}^L 
  \frac{1+\rho^{-p} 2^{-\alpha \ell p }}{1- \rho^{-p} 2^{-\alpha \ell p }}
    &\leq
    \exp\biggl(\sum_{\ell=0}^\infty \bigl(
    \ln
    (1+\rho^{-p} 2^{-\alpha \ell p })-\ln(1- \rho^{-p} 2^{-\alpha \ell p })\bigr)\biggr)\\
    &\leq
    \exp\biggl(\sum_{\ell=0}^\infty
    \rho^{-p}\biggl( 1+ \frac{1}{1- \rho^{-p}} \biggr)2^{-\alpha \ell p }\biggr)
    =
    \exp\biggl(
    \frac{\rho^{-p} (2 -\rho^{-p})}{(1- \rho^{-p})(1-2^{-\alpha  p })}\biggr).
  \end{align*}
using Lipschitz continuity of the logarithm in the second inequality.
We arrive at the result
\begin{align*}
  \sum_{\mathbf k\in \mathcal F(\N_0)}
  a_\mathbf k^p b_\mathbf k
  &\leq 1 + \rho^{-p}
  \exp\biggl(
    \frac{\rho^{-p} (2 -\rho^{-p})}{(1- \rho^{-p})(1-2^{-\alpha  p })}\biggr)
    \sum_{L = 0}^\infty
    2^{( d (1-p) -\alpha p ) L}\\
    &=
    1 + \rho^{-p}
  \exp\biggl(
    \frac{\rho^{-p} (2 -\rho^{-p})}{(1- \rho^{-p})(1-2^{-\alpha  p })}\biggr)
    \frac{1}{1 - 
    2^{( d (1-p) -\alpha p )}}<\infty
\end{align*}
for $ d (1-p) -\alpha p <0 $ which is~\eqref{eq: summability of ref sequence}. 

We verify the result~\eqref{eq: cost growth ref sequence} by studying three different cases of sequences $\mathbf k \in \mathcal F(\N_0)$.
Let $L = \max\{\ell\colon k_\ell >0\}$. If $k_L >1$ then 
$a_{\mathbf k -e_L} < a_{\mathbf k}$ and 
$b_{\mathbf k -e_L} = b_{\mathbf k}$  which implies~\eqref{eq: cost growth ref sequence} for 
$\mathbf k_{i+1} = \mathbf k$. 
If $k_L = 1$ and $k_{L-1} = 0$, then 
 $a_{\mathbf k -e_L + e_{L-1}} < a_{\mathbf k}$ and $2^d b_{\mathbf k -e_L + e_{L-1} } = b_{\mathbf k}$ again implying~\eqref{eq: cost growth ref sequence}. The last case is $k_L = 1$ and $k_{L-1} > 0$. Then 
 $a_{\mathbf k - e_{L-1}} < a_{\mathbf k}$ and $2 b_{\mathbf k -e_L + e_{L-1} } \leq b_{\mathbf k}$ and the result follows.
\end{proof}
We are now in the position to certify approximation rates based on $\cost(B_\mathcal K)$ for approximations of $B$.

\begin{theorem}
\label{thm: operator compression} Let \Cref{ass:wavelettheta} be satisfied, let $\sublev$ be as in \Cref{lem:DisjointSupport}, and let the diffusion coefficient be given by~\eqref{eq:loglinearcoeff}. Then 
there is a sequence $(\mathbf k_j)_{j\in \N}\subseteq \mathcal F(\N_0)$ such that the sets $\mathcal K_J = \{\mathbf k_j\colon j\leq J\}$ satisfy
\[
  \norm{B-B_{\mathcal K_J}}
\lesssim 
\biggl(\sum_{j=1}^J \cost(\mathbf k_j) \biggr)^{-s}
\]
for any $s< \frac{\alpha}d$, with constants depending on $s, \alpha$, $d$, and $\sublev$. Moreover, 
\[
  \sum_{j=1}^{J+1} \cost(\mathbf k_j) \lesssim 2^{\frac{d}{\sublev}}
  \sum_{j=1}^{J} \cost(\mathbf k_j)
.\]
\end{theorem}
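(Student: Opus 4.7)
The plan is to combine the semi-discrete Schur bound of Lemma~\ref{lem: Stoch Schur Lemma} with the pointwise coefficient decay of Lemma~\ref{lem: decay of operator coefficient} to reduce the operator-norm error of $B-B_{\mathcal K}$ to a scalar sum indexed by $\mathcal F(\N_0)$, and then to invoke Lemmas~\ref{lem: summability of reference sequence} and~\ref{lem: Stechkin} in Stechkin form to choose the set $\mathcal K_J$ adaptively. First, applying Lemma~\ref{lem: Stoch Schur Lemma} to $B-B_{\mathcal K}$ (whose components are $[a]_{\nu\nu'}$ with $\nu'\in \mathcal S_\mathbf k(\nu)$ for $\mathbf k \notin \mathcal K$) and using the pointwise bound \eqref{eq:CoefficientSumEstimate} with parameter $\alpha'<\alpha$ and some $\rho>2$ yields
\[
  \norm{B-B_{\mathcal K}}_{\cV\to\cV'} \lesssim \sum_{\mathbf k \notin \mathcal K} c_\mathbf k,\qquad
  c_\mathbf k := 4^{\#\{\ell: k_\ell>0\}} \prod_{\ell \in \N_0}\bigl(\rho\, 2^{\alpha'\ell/\sublev}\bigr)^{-k_\ell}.
\]

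The second step is to match $c_\mathbf k$ against the reference pair $(a_\mathbf k,b_\mathbf k)$ from Lemma~\ref{lem: summability of reference sequence}, so that Stechkin's lemma provides the adaptive selection. Using $2^{\#\{\ell: k_\ell>0\}}\le 2^{\sum_\ell k_\ell}=\prod_\ell 2^{k_\ell}$ to absorb one factor of $2^{\#\{\ell: k_\ell>0\}}$ into the geometric weights gives
\[
  c_\mathbf k \le 2^{\#\{\ell: k_\ell>0\}} \prod_{\ell\in\N_0}\bigl(\tilde\rho\, 2^{\alpha'\ell/\sublev}\bigr)^{-k_\ell},
  \qquad \tilde\rho := \rho/2 > 1,
\]
which is exactly $a_\mathbf k b_\mathbf k$ of Lemma~\ref{lem: summability of reference sequence} with the parameter substitutions $\alpha \leftarrow \alpha'/\sublev$, $d \leftarrow d/\sublev$, $\rho \leftarrow \tilde\rho$. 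By \eqref{eq: cost of coefficient}, the cost also satisfies $\cost(\mathbf k)\lesssim b_\mathbf k$. Order $\mathcal F(\N_0)=(\mathbf k_j)_{j\in\N}$ so that the reference weights $a_{\mathbf k_j}$ are nonincreasing, as in Lemma~\ref{lem: summability of reference sequence}, and set $\mathcal K_J:=\{\mathbf k_1,\dots,\mathbf k_J\}$.

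The third step is to apply Lemma~\ref{lem: Stechkin} with the exponent $p\in(0,1)$ satisfying $1/p-1 = s$. The summability condition $1/p-1<\alpha_{\mathrm{ref}}/d_{\mathrm{ref}} = \alpha'/d$ of Lemma~\ref{lem: summability of reference sequence} holds whenever $s<\alpha'/d$, and the constant in \eqref{eq: summability of ref sequence} is finite; Stechkin then gives
\[
  \sum_{j>J} c_{\mathbf k_j} \;\lesssim\; \Bigl(\sum_{\mathbf k}a_\mathbf k^p b_\mathbf k\Bigr)^{1/p} \Bigl(\sum_{j\le J}\cost(\mathbf k_j)\Bigr)^{-s},
\]
which combined with step one yields the claimed estimate for any $s<\alpha'/d$. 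Since $\alpha'<\alpha$ is arbitrary, the bound holds for all $s<\alpha/d$. The growth assertion $\sum_{j=1}^{J+1}\cost(\mathbf k_j)\lesssim 2^{d/\sublev}\sum_{j=1}^J\cost(\mathbf k_j)$ is immediate from \eqref{eq: cost growth ref sequence} in Lemma~\ref{lem: summability of reference sequence}, since $\cost(\mathbf k_{J+1})\lesssim b_{\mathbf k_{J+1}} \le 2^{d/\sublev}\max_{j\le J}b_{\mathbf k_j}\le 2^{d/\sublev}\sum_{j\le J}\cost(\mathbf k_j)$.

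The main obstacle will be the bookkeeping around the factor $4^{\#\{\ell: k_\ell>0\}}$ from \eqref{eq:CoefficientSumEstimate}: one has to ensure that after absorbing it into the weights $\rho^{-k_\ell}$, the reduced base $\tilde\rho$ is still strictly greater than one so that the geometric series in Lemma~\ref{lem: summability of reference sequence} and the estimate \eqref{eq:CoefficientSumEstimate} both remain valid. This is why one selects $\rho>2$ at the outset; the constants in Lemma~\ref{lem: decay of operator coefficient} then depend on this choice but the exponent $\alpha'$ (and hence the rate $s$) can still be taken arbitrarily close to $\alpha$. The rest amounts to tracking how the substitutions $d\to d/\sublev$ and $\alpha\to \alpha'/\sublev$ in Lemmas~\ref{lem: summability of reference sequence}--\ref{lem: Stechkin} propagate through the constants.
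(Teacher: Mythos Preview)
Your proposal is correct and follows essentially the same route as the paper: reduce the operator-norm error to the tail sum $\sum_{\mathbf k\notin\mathcal K}a_{\mathbf k}b_{\mathbf k}$ via Lemma~\ref{lem: Stoch Schur Lemma} and Lemma~\ref{lem: decay of operator coefficient}, absorb one factor $2^{\#\{\ell:k_\ell>0\}}$ into the geometric weights by taking $\rho=2\tilde\rho>2$, order by decreasing $a_{\mathbf k}$, and invoke Lemmas~\ref{lem: summability of reference sequence} and~\ref{lem: Stechkin}. The only cosmetic difference is that the paper normalizes by assuming $|\cdot|=|\cdot|_{\sublev}$ (so $\sublev=1$), whereas you carry the factor $\sublev$ through the parameter substitutions $\alpha\leftarrow\alpha'/\sublev$, $d\leftarrow d/\sublev$; both are equivalent.

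One small point: in your last chain for the growth assertion you write $\max_{j\le J}b_{\mathbf k_j}\le 2^{d/\sublev}\sum_{j\le J}\cost(\mathbf k_j)$, but the available inequality is $\cost(\mathbf k)\lesssim b_{\mathbf k}$, not the reverse. This step needs the two-sided equivalence $\cost(\mathbf k)\eqsim b_{\mathbf k}$, which is implicit in the construction of $\mathcal S_{\mathbf k}(\nu)$ (the count of level-$L$ indices $\mu$ is $\eqsim 2^{dL/\sublev}$ by Assumption~\ref{ass:wavelettheta}\eqref{ass:waveletthetaLocalization}) but is not stated explicitly. The paper's proof is equally terse here, so this is not a divergence in approach, just a shared bookkeeping detail to be aware of.
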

\begin{proof}
  Without loss of generality we assume $\abs{\cdot} = \abs{\cdot}_\sublev$ from \Cref{lem:DisjointSupport}.
We prove the rate by applying \Cref{lem: Stechkin} to 
sequences of the form described in \Cref{lem: summability of reference sequence}. 
Applying \Cref{lem: decay of operator coefficient} with $\rho  = 2\tilde\rho > 2$ and $\alpha'<\alpha$, for
\[
a_\mathbf k  =  
2^{-d\max \{\ell\colon k_\ell > 0\}}
\prod_{\ell\in \N_0} (\tilde\rho 2^{\alpha'\ell})^{-k_\ell}, \qquad   b_\mathbf k  =  
  2^{\# \{\ell\colon k_\ell > 0\}}2^{d\max \{\ell\colon k_\ell > 0\}}
\]
we have
\[
  \sum_{\nu'\in \mathcal S_\mathbf k(\nu) } \abs{ [a]_{\nu\nu'}}\lesssim a_\mathbf k  b_\mathbf k , \qquad
  \cost(\mathbf k )\lesssim b_\mathbf k. 
\]
We apply \Cref{lem: summability of reference sequence} to get summability of the dominating sequence $a_\mathbf k$ and hence also get summability for 
$p>0$ satisfying for $\frac{1}{p} - 1 < \frac{\alpha'}{d}$
and thus \Cref{lem: Stechkin}

It follows that the sum is finite for $\alpha' p > d(1-p)$, that is, if $\frac1p -1 < \frac{\alpha'}{d}$. Since the choice of $0<\alpha'<\alpha$ is arbitrary, the result follows with \Cref{lem: Stechkin}.
\end{proof}

\subsection{Approximation of diffusion coefficients by polynomials}

Approximation by polynomials plays a central role in the proof of \Cref{lem: decay interaction Legendre}. Indeed, it is possible to get more general approximation results for the stochastic diffusion operators by directly approximating the diffusion coefficient $a$ by polynomials in the stochastic variables. We first consider the stochastic storage costs of applying a diffusion operator that is polynomial in its stochastic variables.
\begin{lemma}\label{lem: polynomial approx legendre interaction}
Let $\tilde a(x,y)$ be of the form 
\[
\tilde a(x,y) = \sum_{\nu\in \mathcal K} a_\nu(x) y^\nu
\]
where $\mathcal K\subset \mathcal F$ with the property 
 if $\nu \in \mathcal K$ and $\nu'_\mu\leq \nu_\mu$ for all $\mu\in\mathcal M$, then $\nu'\in\mathcal K$ as well.
Then with $\tilde B\colon \mathcal V\to \mathcal V'$ given by 
\[
\langle \tilde B v, w \rangle =
\int_D \int_Y \tilde a(x,y)\nabla_x v(x,y)\cdot \nabla_x w(x,y) \sdd\sigma(y) \sdd x,
\]
for any $v \in V$ and $\nu \in \cF$, we have $\tilde B (v\otimes L_\nu) = \sum_{\nu'\in \mathcal K'} g_{\nu'}\otimes L_{\nu'}$
with certain $g_{\nu'}\in V'$, $\nu ' \in \mathcal{K}'$, where
\[
\#\mathcal K'\leq \sum_{\nu\in \mathcal K} 2^{\#\{\mu\colon \nu_\mu>0\}}.
\]
\end{lemma}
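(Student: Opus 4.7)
The plan is to determine the Legendre support of the functional $\tilde B(v\otimes L_\nu)\in \mathcal V'$ directly from the structure of the polynomial $\tilde a(x,y)L_\nu(y)$ in $y$. Since $[\tilde B(v\otimes L_\nu)]_{\nu'}$ corresponds to the functional $w\mapsto \int_D [a]^{\mathrm{eff}}_{\nu\nu'}(x)\nabla v\cdot \nabla w\sdd x$ with $[a]^{\mathrm{eff}}_{\nu\nu'}(x) = \int_Y \tilde a(x,y)L_\nu(y)L_{\nu'}(y)\sdd\sigma(y)$, a necessary condition for $\nu'\in \mathcal K'$ is that some term $a_\eta(x)\prod_\mu\int_{-1}^1 y_\mu^{\eta_\mu} L_{\nu_\mu}L_{\nu'_\mu}\sdd\sigma_1(y_\mu)$ with $\eta\in \mathcal K$ is nonzero. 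Writing $S(\eta,\nu) = \prod_\mu S_\mu(\eta_\mu,\nu_\mu)$, with $S_\mu(\eta_\mu,\nu_\mu)$ the univariate Legendre support of $y\mapsto y^{\eta_\mu}L_{\nu_\mu}(y)$, it thus suffices to bound $\#\bigcup_{\eta\in \mathcal K}S(\eta,\nu)$.

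By iteratively applying the three-term recursion $yL_n = \alpha_n L_{n+1}+\beta_n L_{n-1}$ (with $L_{-1}\equiv 0$), one obtains
\[
  S_\mu(\eta_\mu,\nu_\mu) = \bigl\{k\in\N_0\colon k\equiv \nu_\mu+\eta_\mu \pmod{2},\ \max(0,\nu_\mu-\eta_\mu)\leq k\leq \nu_\mu+\eta_\mu\bigr\}.
\]
In particular, every $\nu'\in S(\eta,\nu)$ satisfies $|\nu'_\mu - \nu_\mu|\leq \eta_\mu$ for all $\mu$. A naive estimate of $\#S(\eta,\nu)$ is $\prod_\mu(\min(\eta_\mu,\nu_\mu)+1)$, which is generally much larger than the target bound $2^{\#\{\mu\colon \eta_\mu>0\}}$, so the downward closedness of $\mathcal K$ must be exploited.

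The key step is to replace each $S(\eta,\nu)$ by the smaller extremal set $E(\eta,\nu) = \prod_\mu E_\mu(\eta_\mu,\nu_\mu)$, where $E_\mu(0,\nu_\mu) = \{\nu_\mu\}$ and $E_\mu(\eta_\mu,\nu_\mu) = \{|\nu_\mu-\eta_\mu|,\, \nu_\mu+\eta_\mu\}$ for $\eta_\mu>0$, so that $\#E(\eta,\nu)\leq 2^{\#\{\mu\colon \eta_\mu>0\}}$. Given any $\nu'\in S(\eta,\nu)$ with $\eta\in \mathcal K$, I set $\eta'_\mu := |\nu'_\mu - \nu_\mu|$; by the previous paragraph $\eta'\leq\eta$ componentwise, and so $\eta'\in \mathcal K$ by downward closedness. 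A short case distinction on the sign of $\nu'_\mu - \nu_\mu$ then shows $\nu'_\mu\in E_\mu(\eta'_\mu,\nu_\mu)$ in each component, whence $\nu'\in E(\eta',\nu)$. Therefore $\bigcup_{\eta\in \mathcal K}S(\eta,\nu)\subseteq\bigcup_{\eta'\in \mathcal K}E(\eta',\nu)$, and a union bound yields the claimed $\#\mathcal K' \leq \sum_{\eta\in \mathcal K}2^{\#\{\mu\colon \eta_\mu>0\}}$.

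The main obstacle is precisely this last reassignment step: without downward closedness of $\mathcal K$, the bound genuinely fails, since a single monomial $y^\eta$ can produce many Legendre indices in each coordinate with $\eta_\mu>0$. Downward closedness allows every such \emph{interior} index of $S(\eta,\nu)$ to be recaptured as an \emph{extremal} index of a smaller $\eta'\in \mathcal K$, and the absolute values in the definition of $E_\mu$ handle the only mild subtlety, namely that $S_\mu$ reflects off zero when $\eta_\mu>\nu_\mu$.
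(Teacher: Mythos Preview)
Your proof is correct and follows essentially the same approach as the paper: both identify the Legendre support of $y^\eta L_\nu$ as a product of intervals and use downward closedness of $\mathcal K$ to reduce the count to the extremal endpoints $\{\nu_\mu\pm\eta_\mu\}$, yielding the factor $2^{\#\{\mu:\eta_\mu>0\}}$ per index. Your explicit reassignment $\eta'_\mu=|\nu'_\mu-\nu_\mu|$ is just a constructive rephrasing of the paper's set-theoretic layer decomposition $\mathcal K_{\nu'}\setminus\bigcup_\mu\mathcal K_{\nu'-e_\mu}$, and your treatment of the reflection at zero via absolute values is in fact slightly cleaner than the paper's.
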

\begin{proof}
First we note that 
\[
  y^{\nu'} L_\nu(y) = \prod_{\mu\in\cM} y_\mu^{\nu'_\mu} L_{\nu_\mu}(y_\mu)
  = \prod_{\mu\in\cM} \Bigl(\sum_{k_\mu = \nu_\mu - \nu'_\mu}^{\nu_\mu + \nu'_\mu} 
  c_{k_\mu} L_{k_\mu}(y_\mu)\Bigr)
  = \sum_{\mathbf k\in \mathcal K_{\nu'}} \prod_{\mu\in\cM}  c_{k_\mu} L_{k_\mu}(y_\mu)
\]
with $\mathcal K_{\nu'} = \{\mathbf k\in \cF\colon \nu_\mu - \nu'_\mu\leq k_\mu\leq \nu_\mu + \nu'_\mu \}$. Moreover, 
\begin{align*}
\mathcal K_{\nu'} \setminus \Bigl( \bigcup_{\mu\in \cM\colon \nu_\mu\geq 1} \mathcal K_{\nu'-e_\mu}\Bigr) = \{\mathbf k\in \cF\colon k_\mu = \nu_\mu - \nu'_\mu \text{ or } k_\mu  = \nu_\mu + \nu'_\mu \},\\
\# \{\mathbf k\in \cF\colon k_\mu = \nu_\mu - \nu'_\mu \text{ or } k_\mu  = \nu_\mu + \nu'_\mu \} \leq 2^{\#\{\mu\colon \nu'_\mu>0\}}.
\end{align*}
The result follows by the downward closed property of $\mathcal K$.
\end{proof}

The error analysis is simpler as in \Cref{lem: Stoch Schur Lemma}. Let $a(x,y)$ be the original diffusion coefficient and $\tilde a(x,y)$ be an approximation by polynomials in the stochastic variables. Let $B$ and $\tilde B$ be the corresponding operators. Then 
\[
\norm{B-\tilde B}_{\cV \to \cV'} \leq \sup_{x\in D, y\in Y} \abs{a(x,y)-\tilde a(x,y)}.
\]
In the context of polynomials, we consider 
\begin{equation}\label{eq: level indices}
  \mathcal S_\mathbf k(\mathbf 0)=  \Bigl\{\nu\in \mathcal F\colon \sum_{\abs{\mu}=\ell} \nu_\mu = k_\ell, \prod_{\mu\colon \nu_\mu\neq 0} \theta_\mu \neq 0 \in L_\infty(D)\Bigr\},
\end{equation}
see \Cref{def: level indices}.
\begin{theorem}\label{thm: polynomial operator compression}
  Let Assumptions \ref{ass:wavelettheta} and \ref{ass:AnalyticContinuation} hold and let $\sublev$ be as in \Cref{lem:DisjointSupport}.
  Then 
  there is a sequence $(\mathbf k_\jnum)_{\jnum\in \N}\subseteq \mathcal F(\N_0)$ of multi-indices and a sequence of polynomials in the stochastic variables of the form 
  \[
  a_\Jnum(x,y) = \sum_{j = 0}^\Jnum \sum_{\nu \in \mathcal N_{\mathbf k_j}(\mathbf 0)} a^\Jnum_\nu(x) y^\nu
  \]
  such that the sets $\mathcal K_\Jnum = \{\mathbf k_\jnum\colon \jnum\leq \Jnum\}$ satisfy
  \[
    \sup_{x\in D, y\in Y} \abs{a(x,y)- a_\Jnum(x,y)}
  \lesssim 
  \biggl(\sum_{\jnum=1}^\Jnum \cost(\mathbf k_j) \biggr)^{-s}
  \]
  for any $s< \frac{\alpha'}d$ with constants depending on $s, \alpha, \alpha', d$, and $\sublev$ and 
  \[
    \sum_{\jnum=1}^{\Jnum+1} \cost(\mathbf k_\jnum) \lesssim 2^{\frac{d}{\sublev}}
    \sum_{\jnum=1}^{\Jnum} \cost(\mathbf k_\jnum)
  .\]
\end{theorem}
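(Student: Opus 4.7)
The plan is to mirror the proof of \Cref{thm: operator compression}, with the operator-norm estimate from \Cref{lem: Stoch Schur Lemma} replaced by a direct $L_\infty(D\times Y)$ estimate for the tail of a tensor-product Chebyshev expansion of $a$. As there, we identify $\abs{\cdot}$ with the reindexed level $\abs{\cdot}_\sublev$ from \Cref{lem:DisjointSupport} without loss of generality, fix $\rho>1$ and $\alpha'<\alpha$, and set $\rho_\mu = \rho\, 2^{\alpha'\abs{\mu}}$.

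The first step is to extract analyticity of $y_\mu\mapsto a(x,y)$ on the Bernstein ellipse $\mathcal E_{\rho_\mu}$ from \Cref{ass:AnalyticContinuation}. For $z_\mu\in\mathcal E_{\rho_\mu}$ one has $\abs{\operatorname{Re} z_\mu}\le (\rho_\mu+\rho_\mu^{-1})/2$ and $\abs{\operatorname{Im} z_\mu}\le (\rho_\mu-\rho_\mu^{-1})/2$. Using \Cref{lem:DisjointSupport}\eqref{ass: coefficient decay} and summing geometric series in $\ell$,
\begin{equation*}
  \sum_\mu \tfrac{\rho_\mu+\rho_\mu^{-1}}{2}\abs{\theta_\mu(x)}\le \tfrac{r_1+r_2}{2}, \qquad
  \sum_\mu \tfrac{\rho_\mu-\rho_\mu^{-1}}{2}\abs{\theta_\mu(x)}\le \tfrac{r_1-r_2}{2},
\end{equation*}
with $r_1,r_2$ as in \Cref{ass:AnalyticContinuation}. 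Hence $\sum_\mu z_\mu\theta_\mu(x)\in\mathcal R_0$ uniformly in $x\in D$, so that $z\mapsto f\bigl(\sum_\mu z_\mu\theta_\mu(x)\bigr)$ is holomorphic and bounded by $M$ on the product region $\prod_\mu\mathcal E_{\rho_\mu}$.

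Next, for the tensor Chebyshev coefficients $c_\nu(x)$ of $a(x,\cdot)$, iterated contour estimates applied variable by variable yield $\abs{c_\nu(x)}\lesssim M\prod_\mu \rho_\mu^{-\nu_\mu}$. We then define $a_\Jnum$ as the partial Chebyshev sum over $\nu\in\bigcup_{\jnum\le\Jnum}\mathcal S_{\mathbf k_\jnum}(\mathbf 0)$, rewritten in the monomial basis $\{y^\nu\}$. Since $\norm{T_{\nu_\mu}}_{L_\infty([-1,1])}\le 1$, the sup-norm remainder is bounded by
\begin{equation*}
  \sup_{x\in D,\,y\in Y}\abs{a(x,y)-a_\Jnum(x,y)}\ \le\ \sup_{x\in D}\sum_{\mathbf k\notin\mathcal K_\Jnum}\sum_{\nu\in\mathcal S_\mathbf k(\mathbf 0)}\abs{c_\nu(x)}.
\end{equation*}
For $\nu\in\mathcal S_\mathbf k(\mathbf 0)$, the constraint $\sum_{\abs{\mu}=\ell}\nu_\mu = k_\ell$ gives $\abs{c_\nu(x)}\lesssim M\prod_\ell(\rho\, 2^{\alpha'\ell})^{-k_\ell}$, and combining this with the cardinality bound \eqref{eq: number of coefficients} produces $\sum_{\nu\in\mathcal S_\mathbf k(\mathbf 0)}\abs{c_\nu(x)}\lesssim a_\mathbf k b_\mathbf k$ for precisely the sequences $(a_\mathbf k),(b_\mathbf k)$ introduced in \Cref{lem: summability of reference sequence}.

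Ordering the multi-indices $(\mathbf k_\jnum)_\jnum$ by decreasing $a_{\mathbf k_\jnum}$, \Cref{lem: summability of reference sequence} secures $\ell_p$-summability of $(a_\mathbf k^p b_\mathbf k)$ for every $p$ with $p^{-1}-1<\alpha'/d$, and \Cref{lem: Stechkin} applied to $a_i=a_{\mathbf k_i}$, $b_i=b_{\mathbf k_i}$ then delivers the tail bound $\bigl(\sum_{\jnum\le\Jnum}\cost(\mathbf k_\jnum)\bigr)^{-s}$ for every $s<\alpha'/d$; since $\alpha'<\alpha$ was arbitrary, this covers the full range $s<\alpha/d$. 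The cost-growth property follows directly from \eqref{eq: cost growth ref sequence} applied to the ordered sequence. The delicate step is the first one: the rectangle $\mathcal R_0$ in \Cref{ass:AnalyticContinuation} has been tailored precisely so that $\sum_\mu z_\mu\theta_\mu(x)$ remains inside the analyticity domain of $f$ uniformly in $x\in D$ as each $z_\mu$ ranges over the geometrically enlarging Bernstein ellipses $\mathcal E_{\rho_\mu}$, and all subsequent estimates are then straightforward adaptations of those in \Cref{thm: operator compression}.
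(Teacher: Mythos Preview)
Your overall strategy is close to the paper's, and the analyticity verification in your first step is correct: for each fixed $x$, the disjoint-support property of \Cref{lem:DisjointSupport}\eqref{ass: same level disjoint support} collapses $\sum_\mu z_\mu\theta_\mu(x)$ to a sum over levels, so the bounds on real and imaginary parts match $r_1,r_2$. However, there is a genuine gap in the tail estimate.

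The bound $\abs{c_\nu(x)}\lesssim M\prod_\mu \rho_\mu^{-\nu_\mu}$ as written hides a factor $2^{\#\supp\nu}$ coming from iterated Chebyshev estimates, which is not uniform in $\nu$. More seriously, the step ``combining this with the cardinality bound \eqref{eq: number of coefficients} produces $\sum_{\nu\in\mathcal S_{\mathbf k}(\mathbf 0)}\abs{c_\nu(x)}\lesssim a_{\mathbf k}b_{\mathbf k}$'' does not work: \eqref{eq: number of coefficients} gives $\#\mathcal S_{\mathbf k}(\mathbf 0)\lesssim b_{\mathbf k}=2^{\#\{\ell:k_\ell>0\}}2^{d\max\{\ell:k_\ell>0\}}$, and multiplying by your per-$\nu$ bound yields $2^{d\max\{\ell:k_\ell>0\}}\,a_{\mathbf k}b_{\mathbf k}$ rather than $a_{\mathbf k}b_{\mathbf k}$. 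This extra factor $2^{d\max\{\ell\}}$ ruins the summability condition in \Cref{lem: summability of reference sequence} and degrades the achievable rate.

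The fix is to observe that for each \emph{fixed} $x$, at most one $\nu\in\mathcal S_{\mathbf k}(\mathbf 0)$ has $c_\nu(x)\neq 0$: indeed $c_\nu(x)=0$ unless $\theta_\mu(x)\neq 0$ for every $\mu\in\supp\nu$, and \Cref{lem:DisjointSupport}\eqref{ass: same level disjoint support} forces at most one such $\mu$ per level. With this, the sum over $\mathcal S_{\mathbf k}(\mathbf 0)$ collapses to a single term and (including the correct $2^{\#\{\ell\}}$ factor) one recovers $a_{\mathbf k}b_{\mathbf k}$. The paper makes this transparent by expanding $f\bigl(\sum_\ell z_\ell 2^{-\alpha\ell}\bigr)$ in tensor Chebyshev polynomials in the \emph{level-aggregated} variables $z_\ell$ (one variable per level rather than one per $\mu$), obtaining a single coefficient $f_{\mathbf k}$ per $\mathbf k$ with the decay \eqref{eq:f_kDecay} via a recursive argument on nested rectangles $\mathcal R_\ell$, and only afterwards substituting $z_\ell=2^{\alpha\ell}\sum_{\abs{\mu}=\ell}y_\mu\theta_\mu(x)$ and separating the $y_\mu$ via disjoint supports.

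A minor correction: your final sentence asserting that $s<\alpha/d$ follows because ``$\alpha'<\alpha$ was arbitrary'' is not valid here. In this theorem $\alpha'$ is fixed by \Cref{ass:AnalyticContinuation} (it determines the analyticity domain $\mathcal R_0$), and the statement claims only $s<\alpha'/d$.
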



\begin{proof}
  Without loss of generality we assume $\abs{\cdot} = \abs{\cdot}_\sublev$ from \Cref{lem:DisjointSupport}.
As a first step we analyse the expansion of 
\[
f\Bigl(\sum_{\ell = 0}^\infty z_\ell 2^{-\alpha\ell}\Bigr) = \sum_{\mathbf k \in \mathcal{\N_0}} f_\mathbf k \prod_{\ell = 0}^\infty T_{k_\ell}(z_\ell)
\]
by product Chebyshev polynomials. Here $T_n$ is the $n$-th Chebyshev polynomial of first kind. First, as $\abs{T_n(z)}\leq 1$ for $z\in [-1,1]$ we have the estimate 
\begin{equation}  \label{eq: polynomial approximation}
  \biggnorm{f \Bigl(\sum_{\ell = 0}^\infty z_\ell 2^{-\alpha \ell}\Bigr) - \sum_{\mathbf k \in \mathcal K} f_\mathbf k\prod_{\ell = 0}^\infty T_{k_\ell}(z_\ell)}_{L_\infty([-1,1]^{\N_0})}
  \leq 
  \sum_{\mathbf k \in \mathcal F(\N_0)\setminus\mathcal K} \abs{f_\mathbf k}.  
\end{equation}
In order to define the coefficients $f_\mathbf k$ we require the correct measure. 
Let $\sdd \lambda_1(z) =  \frac{\sdd z}{\pi\sqrt{1-z^2}}$ and $\lambda$ the according product measure on $Z = [-1,1]^{\N_0}$. Then 
\[
f_\mathbf k 
= 
2^{\#\{\ell\colon k_\ell >0\}}
\int_Z\prod^\infty_{\ell = 0}T_{k_\ell}(z_\ell) 
f\Bigl(\sum_{\ell = 0}^\infty z_\ell 2^{-\alpha \ell}\Bigr) \sdd \lambda(z)
\]
We can estimate the coefficients recursively. To this end, we define the functions 
\[
f_{\mathbf k, L}(\tilde z) =
2^{\#\{\ell\leq L\colon k_\ell >0\}}
\int_{[-1,1]^{L+1}}
\prod_{\ell = 0}^LT_{k_\ell}(z_\ell) 
f\Bigl(\tilde z+ \sum_{\ell = 0}^\infty z_\ell 2^{-\alpha \ell}\Bigr) \sdd \lambda(z). 
\]
Note that for $k_L >0$,
\[
  f_{\mathbf k, L+1}(\tilde z) 
  = \int_{-1}^1 T_{k_{L}}(z)
  f_{\mathbf k, L}(\tilde z +2^{-\alpha L}z)
  2\sdd \lambda_1(z),
\]
 and for $k_L = 0$,
\[
  f_{\mathbf k, L+1}(\tilde z) 
  = \int_{-1}^1 
  f_{\mathbf k, L}(\tilde z +2^{-\alpha L}z)
  \sdd \lambda_1(z).
\]
We use the bounds of $f$ in $\mathcal R_0$ to deduce bounds of the functions $f_{\mathbf k, L}.$
To this end, we define the rectangles
\[
  \mathcal R_\ell = \frac{1}{2}\Bigl(-r^{(\ell)}_1-r^{(\ell)}_2,r^{(\ell)}_1+r^{(\ell)}_2\Bigr) + \frac{i}{2}\Bigl(-r^{(\ell)}_1+r^{(\ell)}_2, r^{(\ell)}_1-r^{(\ell)}_2\Bigr)
\]
with 
\[ 
  r^{(\ell)}_1 =2^{-(\alpha-\alpha')\ell} r_1  =\rho \frac{2^{-(\alpha-\alpha')\ell}}{1-2^{-(\alpha -\alpha')}} 
  \quad\text{and}\quad 
  r^{(\ell)}_2= 2^{-(\alpha+\alpha')\ell}r_2 = \rho^{-1}\frac{2^{-(\alpha'+\alpha)\ell}}{1-2^{-(\alpha +\alpha')}}
\]
and the ellipses
\[
  \mathcal E_\ell = \biggl\{\frac{2^{- \alpha\ell}}2(z+z^{-1})\colon 1 < \abs{z}\leq \rho 2^{\alpha'\ell}\biggr\}.
\]
Note that $\mathcal R_{\ell+1} + \mathcal E_\ell\subset \mathcal R_\ell$. Hence, using approximation rates by polynomials for analytic functions,  if $k_0 >0$ we obtain
\[
\abs{f_{\mathbf k, 0}(z)}\leq 2M \rho^{-k_0}\quad\text{for all $z\in \mathcal R_1$,}
\]
 and if $k_0 =0$,
\[
\abs{f_{\mathbf k, 0}(z)}\leq M \quad\text{for all $z\in \mathcal R_1$;}
\]
 see e.g., \cite[Theorem 8.1]{Trefethen:20}. Furthermore, the function $a_{\mathbf k, 0}$ is analytic in the interior of $\mathcal R_1$. By induction, we get
\[
\abs{f_{\mathbf k, L}(z)}\leq 2^{\#\{\ell\leq L\colon k_\ell >0\}}M
\prod_{\ell = 0}^L2^{-\alpha'\ell k_\ell}\rho^{-k_\ell}
\]
for $z\in \mathcal R_{L+1}$, where $f_{\mathbf k, L}$ is  analytic. It follows that
\begin{equation}\label{eq:f_kDecay}
\abs{f_\mathbf k}\leq 2^{\#\{\ell\colon k_\ell >0\}}M
\prod_{\ell = 0}^\infty2^{-\alpha'\ell k_\ell}\rho^{-k_\ell}.
\end{equation}
Since $2^{\alpha\ell}\sum_{\abs{\mu}=\ell}\abs{\theta_\mu(x)}\leq 1$ by \Cref{lem:DisjointSupport}\eqref{ass: coefficient decay}, we get
\[
  \biggnorm{f\Bigl(\sum_{\ell = 0}^\infty \sum_{\mu\in \mathcal M}y_\mu\theta_\mu\Bigr) - \sum_{\mathbf k \in \mathcal K} f_\mathbf k\prod_{\ell = 0}^\infty T_{k_\ell}\bigl(2^{\alpha\ell}\sum_{\abs{\mu} =\ell}y_\mu\theta_\mu\bigr)}_{L_\infty(D)}
  \leq 
  \sum_{\mathbf k \in \mathcal F(\N_0)\setminus\mathcal K} \abs{f_\mathbf k}
\]
for all $y\in Y$. By the disjoint support condition \eqref{ass: same level disjoint support} we have 
\[
  T_{k_\ell}\Bigl(2^{\alpha\ell}\sum_{\abs{\mu} =\ell}y_\mu\theta_\mu\Bigr)
  =
  T_{k_\ell}(0) + \sum_{\abs{\mu} =\ell}\bigl(T_{k_\ell}(2^{\alpha\ell}y_\mu\theta_\mu) -T_{k_\ell}(0)\bigr),
\]
that is, the stochastic variables can be separated. Furthermore, 
  $T_{k_\ell}(2^{\alpha\ell}y_\mu\theta_\mu) -T_{k_\ell}(0)$ is a polynomial of degree $k_\ell$ in $y_\mu$ with function-valued coefficients that have the same support as $\theta_\mu$. Thus, by condition~\eqref{ass: diff level disjoint support}, the product can be written as
  \[
    \prod_{\ell = 0}^L T_{k_\ell}\Bigl(2^{\alpha\ell}\sum_{\abs{\mu} =\ell}y_\mu\theta_\mu\Bigr)
    = \sum_{\abs\mu = L} 
    \prod_{\ell = 0}^L T_{k_\ell}\bigl(2^{\alpha\ell }
    y_{A(\mu, \ell)}
    \theta_{A(\mu, \ell)}\bigr),
  \]
  where $A(\mu, \ell)$ is the unique $\mu'\in \mathcal M$ with $\abs{\mu'}=\ell$ such that $\theta_\mu\theta_{\mu'}\neq 0$ if it exists, and an arbitrarily chosen one with $\abs{\mu'}=\ell$, otherwise. For each $\mathbf k\in \mathcal F(\N_0)$ there is $L\in \N_0$ such that $k_\ell=0$ for all $\ell>L$. Thus, each $\mathbf k$ contributes 
  $\#\{\mu\colon \abs\mu = L\} \lesssim 2^{dL}$ polynomials in the stochastic variables where $L = \max\{\ell\colon k_\ell>0\}$. 
  
  Observe that for $\mathbf k, \mathbf k'\in \mathcal N$ with $k'_\ell \leq k_\ell$ for all $\ell$ we have that the estimate in~\eqref{eq:f_kDecay} for $\abs{f_{\mathbf k}}$ is smaller than the estimate for $\abs{f_{\mathbf k'}}$. 
  We may thus assume the set $\mathcal N$ to have the property that if $\mathbf k\in \mathcal N$ then $\mathbf k'\in\mathcal N$ as well. 
  Then the set $\mathcal N\subset \mathcal F(\N_0)$ leads to a polynomial approximation of the form in \Cref{lem: polynomial approx legendre interaction} with 
  \[
  \mathcal K = \bigcup_{\mathbf k \in\mathcal N}  \Bigl\{ \nu\in \mathcal F\colon  \sum_{\abs{\mu}=\ell} \abs{\nu_\mu} = k_\ell, \;\exists x\in D \text{ s.t.} \!\prod_{\mu\colon \nu_\mu\neq 0} \theta_\mu(x) \neq 0 \Bigr\}.
  \]
  Each set in the union contains $\#\{\mu\colon \abs\mu = L\} \lesssim 2^{dL}$ elements, where again $L =  \max\{\ell\colon k_\ell>0\}$. In light of \Cref{lem: polynomial approx legendre interaction}, we may thus associate to each $\mathbf k \in \mathcal F(\N_0)$ the costs 
  \[
  \cost(\mathbf k) = 2^{\#\{\ell\colon k_\ell>0\}}\#\{\mu\colon \abs\mu = \max\{\ell\colon k_\ell>0\}\} \lesssim 2^{\#\{\ell\colon k_\ell>0\} +d \max\{\ell\colon k_\ell>0\}} =b_\mathbf k
  \]
  and then apply \Cref{lem: summability of reference sequence} with $a_\mathbf k = \frac{f_\mathbf k}{b_\mathbf k}$. The result follows. 
\end{proof}

\begin{remark}\label{rem:polynomialCoefficients}
  The proof of \Cref{thm: polynomial operator compression} shows that the spatial coefficients of the approximation $a_\Jnum(x,y)$
  can be chosen as polynomials in the functions $\theta_\mu$.
  They satisfy
  \begin{equation}
    \sum_{\jnum = 0}^\Jnum \sum_{\nu \in \mathcal N_{\mathbf k_\jnum}(\mathbf 0)} a^\Jnum_\nu(x) y^\nu
    =
    \sum_{j = 0}^\Jnum
    \sum_{\mathbf k \in  \mathcal N_{\mathbf k_\jnum}(\mathbf 0)} f_\mathbf k\prod_{\ell = 0}^\infty T_{k_\ell}\bigl(2^{\alpha\ell}\sum_{\abs{\mu} =\ell}y_\mu\theta_\mu\bigr).
  \end{equation}
\end{remark}

\section{Complexity estimates}\label{sec:Complexity}

In the case of the affine linear parametrization~\eqref{eq:affinecoeff}, we have analysed the computational complexity of the procedures that constitute the adaptive method in detail \cite{bachmayr2024convergent}. The main result is \cite[\propo 3.7]{bachmayr2024convergent} for evaluating residual estimators, which are subsequently used to refine the meshes. It states that the number of residual estimators and number of operations scale up to log factor linear in the current degrees of freedom, and in the required precision according to the approximation class of the current approximation of the solution.

In the case of the log-affine linear parametrization~\eqref{eq:loglinearcoeff}, we have a few additional difficulties. 
If we approximate the operator as in \Cref{thm: operator compression}, we have the function valued coefficients
\[
[a]_{\nu\nu'} = \int_Y \exp\Bigl(\sum_{\mu\in \mathcal M} y_\mu \theta_\mu \Bigr)L_\nu(y) L_{\nu'}(y)\sdd\sigma(y).
\]
These can in principle be evaluated exactly, but may require~${\abs{\nu}+\abs{\nu'}}$ integrations by parts and is therefore expensive. Another theoretical problem is that one main building block for the residual estimation is~\cite[\lem{2.2}]{bachmayr2024convergent}. For convenience, we restate it here.

\begin{lemma}\label{lem:spatialrelerr}
  Let $m_1, m_2 \in \N_0$.
  Then there is $J_0 \in \N$ depending only on $\initmesh$ and $C(m_1,m_2)$ with additional polynomial dependence on $m_1, m_2$ such that for all $J\geq J_0$ the following holds:
For any $\cT \geq \initmesh$ with interior facets $\mathcal{E}$ and any $\xi \in V'$ of the form 
\begin{equation}\label{eq:xirepresentation}
     \langle \xi, v\rangle = \sum_{K \in \mathcal{T}} \int_K \xi_{K} v \sdd x + \sum_{E \in \mathcal{E}}  \int_E \xi_E v \sdd s ,\quad v \in V,
\end{equation}
where $\xi_K \in \PP_{m_1}(K)$, $K \in \mathcal{T}$, and $\xi_E \in \PP_{m_2}(E)$, $E \in \mathcal{E}$, we have
\[
    \Bigl( \sum_{\lambda \in \Theta_{J}(\cT)} \bigabs{\langle \xi, \psi_\lambda\rangle_{V',V} }^2   \Bigr)^{\frac12} \leq 
    C(m_1,m_2) 2^{-J}  \Bigl(  \sum_{\lambda \in \Theta}  \bigabs{\langle \xi, \psi_\lambda\rangle_{V',V} }^2 \Bigr)^{\frac12} 
\]
with
\begin{multline}\label{eq:ThetaJdef}
  \Theta_{J}(\cT) = \bigl\{  \lambda \in\Theta \colon  (\forall K \in \mathcal{T}\colon    \meas_2(\supp \psi_\lambda \cap K)> 0 \implies \abs{\lambda} > \level(K) + J )   \\
  \wedge   (\forall E \in \mathcal{E}\colon    \meas_1(\supp \psi_\lambda \cap E)>0 \implies \abs{\lambda} > \level(E) + 2J ) 
  \bigr\} .
\end{multline}
\end{lemma}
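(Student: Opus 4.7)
The plan is to localize the estimate to individual elements $K$ and facets $E$, bound each local contribution via polynomial inverse estimates combined with the $V$-normalization of the hat functions, and then pass to $\|\xi\|_{V'}^2$ by bubble-function lifts and the frame equivalence $\sum_{\lambda\in\Theta}|\langle\xi,\psi_\lambda\rangle_{V',V}|^2\eqsim\|\xi\|_{V'}^2$ guaranteed by the finite element frame construction. The asymmetry between the thresholds $\level(K)+J$ and $\level(E)+2J$ in the definition of $\Theta_J(\cT)$ is driven by the different codimensional scaling of volume versus trace pairings.

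By shape regularity, any $\psi_\lambda$ meets only $O(1)$ elements and facets, so after Cauchy--Schwarz it suffices to bound the localized sums
\[
S_K=\!\!\!\!\sum_{\substack{\lambda\in\Theta_J(\cT)\\ \supp\psi_\lambda\cap K\neq\emptyset}}\!\Bigabs{\int_K\xi_K\psi_\lambda\sdd x}^2 \quad\text{and}\quad S_E=\!\!\!\!\sum_{\substack{\lambda\in\Theta_J(\cT)\\ \supp\psi_\lambda\cap E\neq\emptyset}}\!\Bigabs{\int_E\xi_E\psi_\lambda\sdd s}^2.
\]
For $S_K$, the normalization $\|\psi_\lambda\|_V=1$ forces $\|\psi_\lambda\|_{L_\infty(D)}\lesssim 2^{(d/2-1)|\lambda|}$ and $|\supp\psi_\lambda|\lesssim 2^{-d|\lambda|}$, while a polynomial inverse estimate (constant polynomial in $m_1$) yields $\|\xi_K\|_{L_\infty(K)}\lesssim h_K^{-d/2}\|\xi_K\|_{L_2(K)}$. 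With at most $\lesssim 2^{(j-\level(K))d}$ hat functions at level $j>\level(K)+J$ touching $K$, the per-level contribution is $\lesssim 2^{-2j}\|\xi_K\|_{L_2(K)}^2$, so geometric summation gives $S_K\lesssim 2^{-2J}h_K^2\|\xi_K\|_{L_2(K)}^2$. The facet computation uses instead $\int_E|\psi_\lambda|\sdd s\lesssim 2^{-(d/2)|\lambda|}$ and the count $\lesssim 2^{(j-\level(E))(d-1)}$, producing the weaker per-level bound $\lesssim 2^{-j}\|\xi_E\|_{L_2(E)}^2$. This is exactly why $\Theta_J(\cT)$ imposes the stricter threshold $|\lambda|>\level(E)+2J$ on facets: the geometric sum then yields $S_E\lesssim 2^{-2J}h_E\|\xi_E\|_{L_2(E)}^2$, matching the volume bound.

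To convert the local estimates into a bound in terms of $\|\xi\|_{V'}^2$, I would construct element bubbles $v_K\in V$ supported in $K$ with $\|v_K\|_V\lesssim h_K^{-1}\|\xi_K\|_{L_2(K)}$ and $\int_K\xi_K v_K\gtrsim \|\xi_K\|_{L_2(K)}^2$ (by equivalence of norms on $\PP_{m_1}(K)$), and facet bubbles $v_E$ supported on the patch of two simplices sharing $E$ with $\|v_E\|_V\lesssim h_E^{-1/2}\|\xi_E\|_{L_2(E)}$ and $\int_E\xi_E v_E\gtrsim \|\xi_E\|_{L_2(E)}^2$. Testing $\xi$ against sign-weighted sums of these bubbles and using the disjointness (respectively $O(1)$-finite overlap) of supports should then yield the localization
\[
  \sum_{K\in\cT}h_K^2\|\xi_K\|_{L_2(K)}^2 + \sum_{E\in\mathcal E}h_E\|\xi_E\|_{L_2(E)}^2 \lesssim \|\xi\|_{V'}^2.
\]
Combining with the frame equivalence completes the proof: $C(m_1,m_2)$ inherits polynomial growth from the inverse estimates and norm-equivalence constants on polynomial spaces, and $J_0$ is determined by convergence of the geometric sums and the shape-regularity constants of $\initmesh$.

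The main obstacle will be the aggregation display above, because testing $\xi$ against a facet bubble $v_E$ simultaneously picks up the volume contributions $\int_{K_1}\xi_{K_1}v_E+\int_{K_2}\xi_{K_2}v_E$ from the two simplices adjacent to $E$. The cleanest workaround I would pursue is a two-step argument: first isolate the volume part of the bound using element bubbles, whose disjoint supports make the facet terms vanish in those test integrals; then bound the remaining facet sum by testing with edge bubbles and absorbing the resulting cross terms into the already-established volume estimate via Cauchy--Schwarz. Carefully tracking constants from the inverse estimates (in $m_1$) and from the facet norm equivalence (in $m_2$) through this two-step estimate delivers the claimed polynomial dependence of $C(m_1,m_2)$.
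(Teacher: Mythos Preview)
Your proposal is correct and follows essentially the same two-step structure as the paper: first bound $\sum_{\lambda\in\Theta_J(\cT)}|\langle\xi,\psi_\lambda\rangle|^2$ by $2^{-2J}\bigl(\sum_K h_K^2\|\xi_K\|_{L_2(K)}^2+\sum_E h_E\|\xi_E\|_{L_2(E)}^2\bigr)$, then bound this residual estimator by $\|\xi\|_{V'}^2$ via bubble-function efficiency and invoke the frame equivalence. The only notable difference is that you route polynomial dependence on $m_1,m_2$ through $L_\infty$ inverse estimates already in the first step, whereas the paper obtains the first inequality with constants depending only on $\initmesh$ (a direct Cauchy--Schwarz with the finite-overlap property of the $\psi_\lambda$ suffices) and isolates the polynomial dependence entirely in the second step, citing \cite[Prop.~3.46 and Thm.~3.59]{V:13} for the explicit tracking; since the lemma only asserts overall polynomial dependence, either bookkeeping is adequate.
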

\begin{proof}
  This lemma differs from the version~\cite[\lem{2.2}]{bachmayr2024convergent} in the dependence of the polynomial degrees $m_1,m_2$; the proof given there involves the estimates
    \[
    \sum_{\lambda \in \Theta_{J}(\cT)} \bigabs{\langle \xi, \psi_\lambda\rangle_{V',V} }^2\lesssim 2^{-2J} \sum_{K \in \mathcal{T}} h_K^2 \norm{\xi_K}_{L_2(K)}^2 
      + 2^{-2J} \sum_{E \in \mathcal{E}} h_E \norm{\xi_E}_{L_2(E)}^2 \,
      \lesssim 2^{-2J}\norm{\xi}_{V'}^2
 \]
  with the first hidden constant depends on~$\initmesh$ and the second also depending on the polynomial degrees~$m_1, m_2$. 
  Here, the dependence is indeed polynomial in~$m_1, m_2$, as can be seen by~\cite[\propo{3.46}]{V:13} and the proof of~\cite[\theo{3.59}]{V:13}. By~\eqref{eq:PsiNormEquivalence}, it follows that
  \[
   \norm{\xi}_{V'}^2
      \lesssim \sum_{\lambda \in \Theta}  \bigabs{\langle \xi, \psi_\lambda\rangle_{V',V} }^2
  \]
   with a further constant depending on~$\initmesh$ concluding the proof.
\end{proof}
Since the functions $[a]_{\nu\nu'}$ are not polynomials, \Cref{lem:spatialrelerr} is not directly applicable. When we approximate in the spirit of \Cref{thm: polynomial operator compression}, we have polynomial functions in $\theta_\nu$, which again are piecewise polynomials, and \Cref{lem:spatialrelerr} can be applied. However, in contrast to the case of affine parametrizations, we do not have a uniform bound on the arising polynomial degrees. 
The following lemma describes the necessary size of the sets $\#(\Theta\setminus\Theta_J)$.

\begin{lemma}\label{lem:AccuracyCosts}
Under the assumptions of \Cref{lem:spatialrelerr}, for $q>0$, let $J\geq J_0$ be the smallest integer such that $C(m_1,m_2) 2^{-J} \leq q$.
Then with $C(m_1,m_2)$ from \Cref{lem:spatialrelerr} and an additional constant only depending on $\initmesh$,
\[
  \#(\Theta\setminus\Theta_J) \lesssim C(m_1,m_2)^d \#\cT  \frac{1}{q^d}.
\]
\end{lemma}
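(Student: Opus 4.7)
The plan is to first translate the tolerance $q$ into a level cutoff $J$, and then count the nodes of the infinite uniform hierarchy $\Theta$ that fail one of the two conditions in~\eqref{eq:ThetaJdef}, grouped by the element or facet of $\cT$ against which the failure is witnessed. The geometric observation driving the count is that at level $j \geq \level(K)$, a shape-regular simplex $K$ is met (in positive $d$-measure) by only $\lesssim 2^{d(j - \level(K))}$ stars of nodes $x_k \in \cN_j$, so the geometric series over $j$ produces a $2^{dJ}$ budget per element. The analogous fact for facets yields a $2^{(d-1)\cdot 2J}$ budget per $E$, matching $2^{dJ}$ when $d=2$, which is the setting underlying the $\meas_1,\meas_2$ conventions of~\Cref{lem:spatialrelerr}.

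First, from the minimality of $J$ with $C(m_1, m_2) 2^{-J} \leq q$ one has $2^{J} \leq 2 C(m_1, m_2)/q$ and hence $2^{dJ} \lesssim C(m_1, m_2)^d / q^d$. The remaining task is to establish $\#(\Theta \setminus \Theta_J) \lesssim \#\cT \cdot 2^{dJ}$ with a constant depending only on $\initmesh$. I would then split $\Theta \setminus \Theta_J$ into an element-witnessed and a facet-witnessed part according to~\eqref{eq:ThetaJdef}. For the element part, fix $K \in \cT$ and bound, for each level $j \geq \level(K)$, the number of $x_k \in \cN_j$ whose hat function has support meeting $K$ in positive $d$-measure by $\lesssim 2^{d(j-\level(K))}$; summing the geometric series in $j \in [\level(K), \level(K)+J]$ gives $\lesssim 2^{dJ}$ per $K$, and summing over $K$ gives $\lesssim \#\cT \cdot 2^{dJ}$. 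Ancestor-level nodes $j < \level(K)$ are handled globally via the uniform bound $\#\cN_j \lesssim 2^{dj}$, so that $\sum_{j \leq J} \#\cN_j \lesssim 2^{dJ}$ absorbs all low-level contributions independently of $\cT$. The facet part is treated analogously, using $\#\{x_k \in \cN_j : \meas_{d-1}(\supp\psi_{j,k}\cap E) > 0\} \lesssim 2^{(d-1)(j-\level(E))}$ and the extra factor $2J$ in the facet exponent of~\eqref{eq:ThetaJdef}.

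The main obstacle is avoiding overcounting when a single node lies close to several elements or facets. I would address this via a Fubini-style rearrangement: writing the element contribution at levels $j > J$ as a count of nodes whose star meets the ``fine region'' $R_j = \bigcup \{K \in \cT : \level(K) \geq j - J\}$, bounding this by $2^{dj}\, \abs{R_j}_d \lesssim 2^{dj} \sum_{K : \level(K) \geq j - J} 2^{-d \level(K)}$ (stars of level $j$ pack with bounded multiplicity), and swapping the summations to obtain
\[
  \sum_{j > J} 2^{dj} \sum_{K \,:\, \level(K) \geq j - J} 2^{-d \level(K)}
  \;=\; \sum_{K \in \cT} 2^{-d\level(K)} \sum_{j = J+1}^{\level(K)+J} 2^{dj}
  \;\lesssim\; \#\cT \cdot 2^{dJ}.
\]
An analogous rearrangement disposes of the facet contribution, and combining with the first step produces the stated inequality.
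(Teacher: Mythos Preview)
Your proposal is correct and follows the same two-step skeleton as the paper: translate the tolerance $q$ into the level bound $2^{J}\lesssim C(m_1,m_2)/q$, then combine with the counting inequality $\#(\Theta\setminus\Theta_J(\cT))\lesssim 2^{dJ}\#\cT$. The paper's own proof is a single sentence that simply invokes this counting bound as known, whereas you supply an explicit argument for it via the Fubini-style rearrangement over the ``fine region'' $R_j$; your observation that the facet contribution produces $2^{2(d-1)J}$ and thus only matches $2^{dJ}$ for $d=2$ (consistent with the $\meas_1,\meas_2$ notation in \Cref{lem:spatialrelerr}) is a detail the paper does not spell out.
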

\begin{proof}
The result follows directly from $\#(\Theta\setminus\Theta_J)\lesssim 2^{dJ}\#\cT $ with hidden constant only depending on $\initmesh$ and the fact that the smallest possible $J$ satisfies $2^{J-1} \leq \frac{C(m_1,m_2)}{q} $.
\end{proof}

The next hurdle is the efficient evaluation of the stochastic integrals. To this end, let $\mu(\ell, x)$ denote the unique index $\mu$ with $\abs{\mu}_\sublev = \ell$ and $\theta_\mu(x)\neq 0$ if it exists, and an arbitrary index of level $\ell$ otherwise.  Then
\begin{multline}\label{eq:ChebyshevLegendreproduct}
\int_Y \prod_{\ell = 0}^\infty T_{k_\ell}\bigl(2^{\alpha\ell}\sum_{\abs\mu=\ell}y_\mu\theta_\mu(x)\bigr)L_\nu(y)L_{\nu'}(y)\sdd \sigma(y)
\\=
\prod_{\ell = 0}^\infty \int_{-1}^1 T_{k_\ell}(2^{\alpha\ell} y_\ell\theta_{\mu(\ell,x)}(x)) L_{\nu_{\mu(\ell,x)}}(y_\ell) L_{\nu'_{\mu(\ell,x)}}(y_\ell)\sdd \sigma_1(y_\ell).
\end{multline}
For small $\nu_{\mu(\ell,x)},\nu_{\mu(\ell,x)}$ and $k_\ell$ this can be computed cheaply by quadrature. It is however also possible to evaluate~\eqref{eq:ChebyshevLegendreproduct} with a complexity of~$O(\sum_{\ell}k_\ell^3)$, independently of $\nu, \nu'$. To achieve this, for $y,z \in \R$ one can iterate the recursion formulas
\begin{equation*}
  \begin{aligned}
    T_{k+1}(z) &= 2zT_k(z) - T_{k-1}(z), \\
    \sqrt{\beta_{k+1}}L_{k+1}(y) &= yL_k(y) -\sqrt{\beta_k}L_{k-1}(y)
  \end{aligned}
\end{equation*}
for $k>1$ and utilize orthonormality of the Legendre polynomials. This leads to the identity
\begin{equation}\label{eq:LegendreChebyshevEval}
\int_{-1}^1 T_k(yz)L_{\ell}(y) L_{\ell'}(y)\sdd \sigma_1(y) 
= \bigl(T_k(Z) \bigr)_{k+1, k+1+\ell'-\ell}
\end{equation}
{with}
\[
Z = z
\begin{pNiceMatrix}
  0  & \!\!\!\sqrt{\beta_{\ell-k}}\!\!\! &  &  &  \\
  \!\!\!\sqrt{\beta_{\ell-k}}\!\!\! & 0 & \Ddots &   & \\
   & \Ddots &  \Ddots & \Ddots  & \\
   &  &  \Ddots & 0 &  \!\!\!\sqrt{\beta_{\ell+k}} \!\!\! \\
   &  &  &  \!\!\!\sqrt{\beta_{\ell+k}} \!\!\! & 0
\end{pNiceMatrix}.
\]

It is thus possible to evaluate~\eqref{eq:ChebyshevLegendreproduct} pointwise with polynomial costs which in turn can be used to compute spatial integrals via quadrature. The arising polynomial degrees can again be bounded in terms of $\mathbf{k}$ and the polynomial degree of the piecewise functions $\theta_\mu$.
For further analysis of the computational complexity, we use the following adaptation of \Cref{lem: summability of reference sequence}. Its proof mirrors the one of \Cref{lem: summability of reference sequence} and is given in \Cref{sec:lemmas} for the sake of completeness.

\begin{lemma}\label{lem:SummabilityOfRefSequence2}
  Let $\alpha>0$, $t,d_a,d_b\in\N_0$, and  $\rho >1$. Furthermore, for $\mathbf k \in \mathcal F(\N_0)$, let 
  \[
  a_\mathbf k = 2^{-d_a\max \{\ell\colon k_\ell > 0\}}
  \prod_{\ell\in \N_0} (\rho 2^{\alpha \ell})^{-k_\ell}\quad \text{and}
  \quad
  b_\mathbf k = 2^{\# \{\ell\colon k_\ell > 0\}}2^{d_b\max \{ \ell\colon k_\ell > 0\}}\prod_{\ell\in \N_0} \binom{k_\ell + t}{t}
  \]
  Then for $p>0$ such that $\frac{d_b}{p} -d_a < \alpha$, we have 
  \begin{equation}\label{eq:SummabilityOfRefSequence2}
    \sum_{\mathbf k\in \mathcal F(\N_0)} a_\mathbf k^p b_\mathbf k
    \leq 
    1 + \rho^{-p}
    \exp\biggl(
      \frac{\rho^{-p}(t+1) (2 -\rho^{-p})}{(1- \rho^{-p})(1-2^{-\alpha  p })}\biggr)
      \frac{1}{1 - 
      2^{d_b-(\alpha+d_a)p}}
    <\infty.
  \end{equation}
\end{lemma}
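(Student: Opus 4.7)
The plan is to mirror the proof of \Cref{lem: summability of reference sequence} line by line, substituting the standard binomial identity
\[
\sum_{k=0}^\infty \binom{k+t}{t} x^k = (1-x)^{-(t+1)}
\]
for the plain geometric series used there. The extra weight $2^{\#\{\ell : k_\ell>0\}}$ still forces a doubling of the $k\geq 1$ contribution while leaving the $k=0$ term untouched, so each factor in the product expansion becomes
\[
g(x) := 1 + 2\sum_{k\geq 1}\binom{k+t}{t} x^k = 2(1-x)^{-(t+1)} - 1,
\]
which plays the role of the quotient $(1+x)/(1-x)$ appearing in the $t=0$ case.

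Concretely, I would decompose $\mathcal F(\N_0)\setminus\{\mathbf 0\}$ according to the maximal active index $L = \max\{\ell : k_\ell > 0\}$ and, using the same product--sum interchange as in \Cref{lem: summability of reference sequence}, rewrite
\[
\sum_{\mathbf k\in\mathcal F(\N_0)} a_\mathbf k^p b_\mathbf k \;=\; 1 + \sum_{L=0}^\infty 2^{L(d_b - d_a p)} \cdot 2\bigl[(1-x_L)^{-(t+1)} - 1\bigr]\prod_{\ell=0}^{L-1} g(x_\ell),
\]
where $x_\ell := \rho^{-p} 2^{-\alpha\ell p}$; the $\ell = L$ factor is singled out because $k_L \geq 1$ is forced while the remaining $k_\ell$ range over $\N_0$ independently.

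To control the product I would pass to logarithms and combine $\log(1+y)\leq y$ with $-\log(1-x)\leq x/(1-x)$, together with the elementary estimate $(1-x)^{-(t+1)} - 1 \leq (t+1)x(1-x)^{-(t+1)}$ and the uniform bound $x_\ell \leq \rho^{-p}$, to obtain
\[
\log g(x_\ell) \;\leq\; (t+1)\, x_\ell \cdot \frac{2-\rho^{-p}}{1-\rho^{-p}}.
\]
Summing over $\ell$ via the geometric series $\sum_\ell x_\ell = \rho^{-p}/(1-2^{-\alpha p})$ reproduces the exponent in \eqref{eq:SummabilityOfRefSequence2}. The isolated $\ell=L$ term is estimated by the same device, giving a bound of the form $C_t\,\rho^{-p}2^{-\alpha L p}$, so that the outer sum over $L$ collapses to a geometric series in $2^{d_b - (d_a+\alpha)p}$; this is exactly where the hypothesis $\tfrac{d_b}{p} - d_a < \alpha$ is used to guarantee convergence and produce the factor $\bigl(1 - 2^{d_b - (\alpha+d_a)p}\bigr)^{-1}$.

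The routine part is the algebra; the one genuine bookkeeping obstacle is matching the precise prefactor in \eqref{eq:SummabilityOfRefSequence2}. When one bounds $g$ and the perturbation $(1-x_L)^{-(t+1)}-1$ naively, spurious factors of $(1-\rho^{-p})^{-(t+1)}$ appear. To land on the stated exponent containing only $(t+1)\rho^{-p}(2-\rho^{-p})/[(1-\rho^{-p})(1-2^{-\alpha p})]$, I expect to need the sharper identity $(g(x)-1)/g(x)\leq 2(t+1)x$ (proved by computing $1-(1-x)^{t+1}/[2-(1-x)^{t+1}]\leq (t+1)x$), which lets one absorb the entire $\ell = L$ factor into the product $\prod_{\ell=0}^L g(x_\ell)$ with only a linear prefactor $x_L$ surviving; everything else in the argument is a direct transcription of the proof of \Cref{lem: summability of reference sequence}.
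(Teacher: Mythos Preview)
Your proposal is correct and takes essentially the same route as the paper: decompose by the maximal active level $L$, factorize the inner sum using $\sum_{k\geq 0}\binom{k+t}{t}x^k=(1-x)^{-(t+1)}$ so that each level contributes the factor $g(x_\ell)=2(1-x_\ell)^{-(t+1)}-1$, and bound $\prod_{\ell\le L} g(x_\ell)$ by passing to logarithms with the estimates $\ln(2-(1-x)^{t+1})\le (t+1)x$ and $-\ln(1-x)\le x/(1-\rho^{-p})$. Your handling of the distinguished $\ell=L$ term via $(g(x)-1)/g(x)\le 2(t+1)x$ is a slight variant of the paper's shift $\mathbf k\mapsto\mathbf k+e_L$; it is more explicit about why an $x_L$ survives as a prefactor and yields the same structure with at most an innocuous extra factor $2(t+1)$ in front of $\rho^{-p}$, which is immaterial for every downstream use of the lemma.
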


\begin{lemma}\label{lem:ApplicationCost}
  Let \Cref{ass:wavelettheta} and \Cref{ass:pwpolytheta} be satisfied and let $\sublev$ be as in \Cref{lem:DisjointSupport}.
  For $\opApproxnum\in\N_0$, let $\mathcal K_\opApproxnum\subset \cF(\N_0)$ be a finite downward-closed set. We define
  \[
  L_\mathbf k =\max\{\ell\colon k_\ell>0\},
  \quad
  L = \max_{\mathbf k \in \mathcal K_\opApproxnum} L_\mathbf k,
  \quad
  M_\mathbf k  =  m\sum_{\ell = 0}^L k_\ell,
  \quad 
  M = \max_{k\in\mathcal K_I} M_\mathbf k,
  \quad
  \hat k_\ell = \max_{\mathbf k \in\mathcal K_\opApproxnum} k_\ell.
  \]
  Let $B_\opApproxnum\colon \cV\to \cV'$ be given by
  \[
  \langle B_\opApproxnum v, w \rangle_{\cV',\cV} = \sum_{\mathbf k \in \mathcal K_\opApproxnum} f_\mathbf k  
  \int_D
  \int_Y \prod_{\ell = 0}^L T_{k_\ell}\bigl(2^{\frac\alpha\sublev\ell}\sum_{\abs\mu_\sublev=\ell}y_\mu\theta_\mu\bigr)
  \nabla v \cdot \nabla w  
  \sdd \sigma(y)
  \sdd x\,.
  \]
  For $\nu\in\cF$ and $[v]_\nu\in V(\cT_\nu)$, 
  let $g_\nu = B_\opApproxnum ([v]_\nu \otimes L_\nu) \in \cV'$. 
  Then $\supp ([g_\nu]_{\nu'})_{\nu'\in\cF}\subseteq\bigcup_{\mathbf k\in \mathcal K_\opApproxnum}\mathcal S_{\mathbf k}(\nu)$ and the functional $[g_\nu]_{\nu'}$ is uniquely determined by the piecewise polynomial flux term
  \begin{equation}\label{eq:gkChar}
   \gknuprimepol(x) = 
   \sum_{\mathbf k \in \mathcal K_\opApproxnum} f_\mathbf k  
   \int_Y \prod_{\ell = 0}^L T_{k_\ell}\bigl(2^{\frac\alpha\sublev\ell}\sum_{\abs\mu_\sublev=\ell}y_\mu\theta_\mu(x)\bigr)L_\nu(y)L_{\nu'}(y) 
  \sdd \sigma(y)
  \nabla [v]_\nu(x) 
  \end{equation} 
  in at most
  $\binom{d+M}{M} \sum_{\mathbf k\in \mathcal K_\opApproxnum} 2^{\# \{\ell\colon k_\ell > 0\}}(\#\cT_\nu + 2^{\frac{d}{\sublev} L_\mathbf k}) $
  suitable interpolation points, and
  all $\gknuprimepol$, $\nu\in\bigcup_{\mathbf{k}'\leq \mathbf k}\mathcal S_{\mathbf k'}(\nu)$,
  can be evaluated at each point simultaneously using $O(L\sum_{\mathbf k\in \mathcal K_\opApproxnum} 2^{\# \{\ell\colon k_\ell > 0\}}+ \sum_{\ell = 0}^L\hat k_\ell^3)$ arithmetic operations.

  Moreover, let 
\[
  a_\mathbf k = 2^{-\frac{d}{\sublev}\max \{\ell\colon k_\ell > 0\}}
  \prod_{\ell\in \N_0} (\rho 2^{\frac{\alpha'}{\sublev} \ell})^{-k_\ell}\quad \text{and}
  \quad
  b_\mathbf k = 2^{\# \{\ell\colon k_\ell > 0\}}2^{\frac{d}{\sublev}\max \{ \ell\colon k_\ell > 0\}},
  \]
  and let the set $\mathcal K_I$ satisfy $a_\mathbf k \geq a_{\mathbf k'}$ for all $\mathbf k \in \mathcal{K}_\opApproxnum, \mathbf k'\notin \mathcal K_\opApproxnum$, and $\sum_{\mathbf k\in \mathcal K_\opApproxnum}b_\mathbf k \leq 2^{d\opApproxnum}$. Then 
  \begin{equation}\label{eq:LandMestimate}
  L \leq \sublev\opApproxnum-1,\quad M\leq 1 + \frac{\alpha'+d }{\log_2{\rho}} \opApproxnum
  \end{equation}  
  and for any $p>0$,
  \begin{equation}\label{eq:sumk0estimate}
  \sum_{\mathbf k\in \mathcal K_\opApproxnum} 2^{\# \{\ell\colon k_\ell > 0\}} 
  \leq 
   2^{\opApproxnum{(\alpha'+d)p}}
  \rho^{p}
  \biggl(1 + \rho^{-p}
    \exp\biggl(
      \frac{\rho^{-p} (2 -\rho^{-p})}{(1- \rho^{-p})(1-2^{-{\alpha'}{\sublev}^{-1}  p })}\biggr)
      \frac{1}{1 - 
      2^{-\frac{\alpha'+d}{\sublev}p}}\biggr)
  .
  \end{equation}
\end{lemma}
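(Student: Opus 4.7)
The plan is to dispatch the claims in the order they appear in the statement, with \Cref{lem:DisjointSupport} as the central tool. To obtain the support assertion and the formula \eqref{eq:gkChar} for $\gknuprimepol$, I unfold $\langle B_\opApproxnum([v]_\nu\otimes L_\nu),\,w\otimes L_{\nu'}\rangle$, swap the $x$- and $y$-integrations, and use the disjoint-support property from \Cref{lem:DisjointSupport}\eqref{ass: same level disjoint support} to split each $T_{k_\ell}(2^{\frac{\alpha}{\sublev}\ell}\sum_{|\mu|_\sublev=\ell} y_\mu\theta_\mu(x))$ into contributions supported on disjoint subregions, each involving only one $y_\mu$ per active level. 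Legendre orthogonality then annihilates every term for which $\nu-\nu'$ is supported outside the active level-$\ell$ partners of $\mathbf{k}$, which forces $\nu'\in\bigcup_{\mathbf{k}\in\mathcal K_\opApproxnum}\mathcal S_\mathbf{k}(\nu)$; identifying the spatial integrand of $\langle[g_\nu]_{\nu'},w\rangle$ with $\int_D \gknuprimepol\cdot\nabla w\,\sdd x$ yields \eqref{eq:gkChar}. The piecewise-polynomial structure is inherited from \Cref{ass:pwpolytheta}: since $\theta_\mu\in\PP_m(\hat\cT_{|\mu|+\tau})$, each Chebyshev factor is polynomial of degree $mk_\ell$ on $\hat\cT_{\sublev\ell+\tau}$, the stochastic integration preserves this, and multiplication by $\nabla[v]_\nu\in\PP_0(\cT_\nu)^d$ leaves the degree at $M_\mathbf{k}$ on the common refinement.

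The count of interpolation points requires leveraging disjointness carefully. For fixed $\mathbf{k}$, each $\nu'\in\mathcal S_\mathbf{k}(\nu)$ is determined by a level-$L_\mathbf{k}$ partner $\mu^*$ together with sign choices at the $\#\{\ell\colon k_\ell>0\}$ active levels, since \Cref{lem:DisjointSupport}\eqref{ass: diff level disjoint support} pins the lower-level partners once $\mu^*$ is fixed. The support of $\gknuprimepol$ is contained in $\supp\theta_{\mu^*}$, and as $\mu^*$ ranges over its $\lesssim 2^{\frac{d}{\sublev} L_\mathbf{k}}$ possibilities these supports are disjoint, so summing the pieces of the common refinement of $\cT_\nu$ and $\hat\cT_{\sublev L_\mathbf{k}+\tau}$ over all $\mu^*$ yields $\lesssim \#\cT_\nu+2^{\frac{d}{\sublev} L_\mathbf{k}}$ pieces per sign combination. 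Multiplying by the $2^{\#\{\ell\colon k_\ell>0\}}$ sign combinations and by $\binom{d+M_\mathbf{k}}{M_\mathbf{k}}\leq\binom{d+M}{M}$ interpolation nodes needed per piece gives the stated bound. For the arithmetic cost at a fixed point, I invoke \eqref{eq:LegendreChebyshevEval}: for each level, computing $T_{\hat k_\ell}(Z_\ell)$ by the Chebyshev three-term recursion over a tridiagonal matrix of bandwidth $O(\hat k_\ell)$ costs $O(\hat k_\ell^3)$, after which every one-dimensional integral is just a matrix entry; multiplying the $L$ factors for each of the $\sum_\mathbf{k} 2^{\#\{\ell\colon k_\ell>0\}}$ spatially active $\nu'$ then yields the total $O(L\sum_\mathbf{k}2^{\#\{\ell\colon k_\ell>0\}}+\sum_\ell\hat k_\ell^3)$.

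For the final bounds, the cost constraint $\sum b_\mathbf{k}\leq 2^{d\opApproxnum}$ combined with $b_{\mathbf{k}^*}\geq 2\cdot 2^{\frac{d}{\sublev} L}$ whenever $L_{\mathbf{k}^*}=L>0$ gives $L\leq\sublev\opApproxnum-1$ directly. The bound on $M$ follows from the greedy property: for $\mathbf{k}^*$ achieving $M_{\mathbf{k}^*}=M$, comparing $a_{\mathbf{k}^*}$ against competitors outside $\mathcal K_\opApproxnum$ of minimal cost (for example single entries at level $0$) together with the bound on $L$ forces $\rho^{-M/m}\gtrsim 2^{-(\alpha'+d)\opApproxnum}$, yielding $M\lesssim 1+\opApproxnum(\alpha'+d)/\log_2\rho$. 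Finally \eqref{eq:sumk0estimate} follows by applying \Cref{lem: summability of reference sequence} (in the variant valid for any $p$ with $1/p-1<\alpha'/d$) to control $\sum_\mathbf{k} a_\mathbf{k}^p b_\mathbf{k}$, writing $2^{\#\{\ell\colon k_\ell>0\}}=a_\mathbf{k}^{-p}(a_\mathbf{k}^p b_\mathbf{k})\cdot 2^{-\frac{d}{\sublev} L_\mathbf{k}}$, and invoking the bounds on $L$ and $M$ to dominate $a_\mathbf{k}^{-p}\cdot 2^{-\frac{d}{\sublev} L_\mathbf{k}}\leq\rho^p\cdot 2^{(\alpha'+d)\opApproxnum p}$ uniformly on $\mathcal K_\opApproxnum$. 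The main obstacle is the piece-counting step: a naive bound would use $\#\mathcal S_\mathbf{k}(\nu)\lesssim 2^{\#\{\ell\colon k_\ell>0\}}2^{\frac{d}{\sublev} L_\mathbf{k}}$ together with $\#\cT_\nu+2^{\frac{d}{\sublev} L_\mathbf{k}}$ pieces per $\nu'$, producing a product; replacing this product by the additive count in the statement requires summing supports over $\mu^*$ and exploiting the sublevel disjointness from \Cref{lem:DisjointSupport}.
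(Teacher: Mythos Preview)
Your proposal follows the paper's line closely and the first two paragraphs are essentially correct. Two points deserve attention.

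For the bound on $M$, your phrasing ``competitors outside $\mathcal K_\opApproxnum$ of minimal cost (for example single entries at level $0$)'' is misleading: multiples $Ne_0$ have the \emph{largest} $a$-values among nonzero indices, so they are the ones most likely to lie \emph{inside} $\mathcal K_\opApproxnum$. The paper's argument is cleaner: from the bound on $L$ one has $e_{\sublev\opApproxnum}\notin\mathcal K_\opApproxnum$, so the greedy ordering gives $a_{\mathbf k^*}\ge a_{e_{\sublev\opApproxnum}}=\rho^{-1}2^{-(\alpha'+d)\opApproxnum}$ for every $\mathbf k^*\in\mathcal K_\opApproxnum$; combining this with $a_{\mathbf k^*}\le\rho^{-M/m}$ (which you correctly obtain by shifting all mass to level~$0$) yields the stated bound on $M$.

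For \eqref{eq:sumk0estimate} there is a genuine gap. You invoke \Cref{lem: summability of reference sequence} to bound $\sum_{\mathbf k} a_\mathbf k^p b_\mathbf k$, but that lemma carries the restriction $\tfrac1p-1<\tfrac{\alpha'}{d}$, whereas the statement claims the estimate for \emph{all} $p>0$; moreover the constant you obtain is not the one displayed in \eqref{eq:sumk0estimate}. The paper instead factorizes $2^{\#\{\ell:k_\ell>0\}}=a_\mathbf k^{-p}\bigl(a_\mathbf k^p\,2^{\#\{\ell:k_\ell>0\}}\bigr)$ and bounds the second factor by \Cref{lem:SummabilityOfRefSequence2} with $d_b=0$, $d_a=d/\sublev$, $t=0$; the summability condition $d_b/p-d_a<\alpha'/\sublev$ then reduces to $-d<\alpha'$, which holds for every $p>0$, and the resulting constant is exactly the bracketed expression in \eqref{eq:sumk0estimate}. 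Your decomposition routes through $b_\mathbf k$ instead of $2^{\#\{\ell:k_\ell>0\}}$, which costs you both the full range of $p$ and the precise constant.
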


\begin{proof}
  By \Cref{ass:pwpolytheta} and \Cref{lem:DisjointSupport}, the function $\gknuprimepol|_T$ is a polynomial of degree at most $M$ on each element~$T$ in the smallest common refinement $\cT$ of $\hat\cT_{\ceil{L/\sublev} +\thetaelementnum}$ and $\cT_\nu$. Thus, we need to collect at most $\binom{d+M}{M}$ interpolation points on each element to uniquely determine each $\gknuprimepol$.
  

  Now let~$\mu(\ell, x)$ denote the unique index~$\mu$ with $\abs{\mu}_\sublev = \ell$ and $\theta_\mu(x)\neq 0$. Then for $k_\ell\leq \hat k_\ell$ and $\nu'_{\mu(\ell, x)}  = \nu_{\mu(\ell, x)} \pm k_\ell$,
\[
  \gknuprimepol(x) =
  \sum_{\mathbf k \in \mathcal K_\opApproxnum} f_\mathbf k  
  \prod_{\ell = 0}^L \int_{-1}^1 T_{k_\ell}\bigl(2^{\frac\alpha\sublev\ell}y_\ell\theta_{\mu(\ell,x)}(x)\bigr)L_{\nu_\mu(\ell, x)}(y_\ell)L_{\nu'_\mu(\ell, x)}(y_\ell) 
  \sdd \sigma_1(y_\ell)
  \nabla [v]_\nu(x) 
\]
and $\gknuprimepol(x) = 0$ for all other $\nu'$. 
Therefore, for each element $T\in\cT$ there are $2^{\# \{\ell\colon k_\ell > 0\}}$ indices $\nu\in\cF$ such that $\gknuprimepol|_T\neq 0$. We therefore require at most $\binom{d+M}{M} \sum_{\mathbf k\in \mathcal K_\opApproxnum} 2^{\# \{\ell\colon k_\ell > 0\}}(\#\cT_\nu + 2^{\frac{d}{\sublev} L_\mathbf k}) $ interpolation points.

We can compute the integrals simultaneously via~\eqref{eq:LegendreChebyshevEval}. This results in~$O(\sum_{\ell = 0}^L\hat k_\ell^3)$ basic arithmetic operations per interpolation point since each integral requires $\hat k_\ell$ matrix multiplications with a tridiagonal matrix of size $2\hat k_\ell+1$ and $2 \hat k_\ell$ summations of matrices for the recursion of the Chebyshev polynomials.

The estimate~\eqref{eq:LandMestimate} for $L$ follows directly from $\sum_{\mathbf k\in \mathcal K_\opApproxnum}b_\mathbf k \leq 2^{d\opApproxnum}$. Furthermore, note that $Me_0 \in \mathcal K_\opApproxnum$ and $e_{\sublev\opApproxnum}\notin \mathcal K_\opApproxnum$ and thus
  \[
    \rho^{-M} =  a_{Me_0}\geq a_{e_{\sublev\opApproxnum}} = \rho^{-1} 2^{-{(\alpha'+d)}\opApproxnum},
  \]
which directly implies the bound on $M$. Finally, we apply \Cref{lem:SummabilityOfRefSequence2} to get 
\[
\sum_{\mathbf k\in \mathcal F(\N_0)} a_\mathbf k^p 2^{\# \{\ell\colon k_\ell > 0\}}  
\leq 
1 + \rho^{-p}
\exp\biggl(
      \frac{\rho^{-p} (2 -\rho^{-p})}{(1- \rho^{-p})(1-2^{-{\alpha'}{\sublev}^{-1}  p })}\biggr)
  \frac{1}{1 - 
  2^{-\frac{\alpha'+d}{\sublev}p}}.
\]
Since $ a_\mathbf k \geq \rho^{-1} 2^{-{(\alpha'+d)}\opApproxnum}$ for all $\mathbf k \in \mathcal K_\opApproxnum$, the estimate~\eqref{eq:sumk0estimate} follows readily.
\end{proof}
\begin{remark}
It can be advantageous to not store~\eqref{eq:gkChar} directly. but to store the $L$ factors independently to reuse them for several polynomials.
\end{remark}

\begin{algorithm}[t]
  \caption{$((\Theta_\nu^+)_{\nu\in F^+},(\hat{\br}_\nu)_{\nu\in F^+},([r] _\nu)_{\nu\in F^+}, \eta, b) \!=\! \text{\textsc{ResEstimate}}(v; \zeta, \eta_0, \varepsilon)$, for $\#\supp ([v]_\nu)_\nu < \infty$, relative tolerance $\zeta>0$, initial tolerance $\eta_0$, target tolerance $\varepsilon$.}\label{alg:ResEstimate}

  \flushleft Set $\eta = \eta_0$; choose $q < \zeta$.
  \begin{enumerate}[{\bf(i)}]
    \item Set $(F_i, \opApproxnum_i)_{i = 0}^I = \text{\textsc{Apply}}(v; \eta/C_\Psi)$ by \Cref{alg:apply_semidiscr};
    
  \item  For each $i = 0,\ldots, I$
  \begin{enumerate}[{\bf(1)}]
    \item Set $F_i^+ = \bigcup_{\nu'\in F_i\setminus F_{i-1},\mathbf k \in\mathcal K_{\opApproxnum_i}} \mathcal S_\mathbf k(\nu')$;
    \item For each $\nu' \in F_i\setminus F_{i-1}$ and
     each $\nu \in \supp \bigl(\bigl[B_{\opApproxnum_\nu} [v]_{\nu'}\otimes L_{\nu'}\bigr]_\nu\bigr)_{\nu\in \cF}$ compute 

    \quad\quad $\displaystyle  [g_i]_{\nu,\nu'} =  \bigl[B_{\opApproxnum_\nu} [v]_{\nu'}\otimes L_{\nu'}\bigr]_\nu$;
    

    \item For each $\nu\in F^+_i$ evaluate
  
    \quad\quad $\displaystyle [r_i] _{\nu} = - \sum_{\nu'} [g_i]_{\nu,\nu'} = \text{\textsc{Sum}}\bigl( (-[g_i]_{\nu,\nu'})_{\nu'}\bigr)$;


  \end{enumerate}

    \item Set $F^+ = \bigcup_{i=0}^I F_i^+$;
    
    \item\label{algstep:rnusum} For each $\nu\in F^+$ evaluate

    \quad\quad $\displaystyle [r] _{\nu} =\delta_{0,\nu} f - \sum_{i = 0}^I [r_i]_{\nu} = \text{\textsc{Sum}}\bigl(\delta_{0,\nu} f, (-[g_i]_{\nu,\nu'})_{i = 0}^I\bigr)$

    \noindent
    and set $\hat J_\nu$ so that $ 2^{-\hat J_\nu} C(M-1,M) \leq q$ with $M$ the polynomial degree of $ [r]_{\nu}$;

    \item\label{algstep:DualNormIndicators} For each $\nu\in F^+$ set $(\Theta_{\nu}^+,\hat\br_{\nu}) = \text{\textsc{DualNormIndicators}}( [r]_{\nu},\hat J_\nu)$;
    \item Let $b = \eta + (1+\frac{q}{\sqrt{1-q^2}})\norm{(\norm{\hat\br_\nu}_{\ell_2})_{\nu\in F^+}}$. If $\eta \leq \frac{\zeta - q}{ 1+\zeta} \norm{(\norm{\hat\br_\nu}_{\ell_2})_{\nu\in F^+}}$ or $b \leq \varepsilon$,

    \noindent
    \quad return $((\Theta_\nu^+)_{\nu\in F^+},(\hat\br_\nu)_{\nu\in F^+},([r] _\nu)_{\nu \in F^+}, \eta, b)$; 
    
    \noindent otherwise, set $\eta \leftarrow \eta/2$ and go to  {\bf(i)};
  \end{enumerate}
  \label{optscheme}
  \end{algorithm}
 
We are now in the position to characterize the complexity of \Cref{alg:ResEstimate}.

\begin{prop}\label{prop:ResEstimate}
    Let $((\Theta_\nu^+)_{\nu \in F^+}, (\hat\br_
    \nu)_{
      \nu\in F^+
    }, \eta, b)$ be the return values of Algorithm \ref{optscheme} and let
\begin{equation}\label{eq:Lambdaplus}
  \Lambda^+ = \bigl\{  (\nu, \lambda) \in \cF \times \Theta\colon \nu \in F^+, \lambda \in \Theta_\nu^+ \bigr\}\,.
\end{equation} 
    Set $\hat\br_\nu = 0$ for $\nu\notin F^+$.
    Furthermore, let $\bz = (\langle [f-Bv]_\nu, \psi_\lambda\rangle)_{\nu\in\cF,\lambda\in\Theta}$.
    Then $\norm{\bz}_{\ell_2} \leq  b$ and either $b \leq \varepsilon$, or $\hat\br$ satisfies 
    \begin{equation}\label{eq:reserrbound} 
      \norm{\hat\br -\bz}_{\ell_2} \leq \zeta \norm{\bz}_{\ell_2},
    \end{equation}
    where we have
    $ \#\supp\hat\br \leq \#\Lambda^+ = \sum_{\nu \in F^+} \# \Theta_\nu^+$. Moreover, for any $p>0$, there are polynomials $p_1, p_2$ with degrees only depending on the degrees of the 
    polynomial constant in \Cref{lem:AccuracyCosts}
    and with coefficients depending
    on $p$, on the polynomial constant in \Cref{lem:AccuracyCosts} and on the parameters in \Cref{prop:semidiscrapply}, such that 
    \begin{multline}\label{eq:rsuppest}
     \#\Lambda^+ \leq
     p_1(1+\abs{\log\eta} +\log{\bignorm{\bigl(\norm{[v]_\nu}_V \bigr)_{\nu\in\cF}}_{\cA^s}})
     \\
     \bigl(
     \#\cT(f) + \eta^{-p} \bignorm{\bigl(\norm{[v]_\nu}_V \bigr)_{\nu\in\cF}}_{\cA^s}^{p} N(\T) + \eta^{-\frac1s} \bignorm{\bigl(\norm{[v]_\nu}_V \bigr)_{\nu\in\cF}}_{\cA^s}^{\frac1s}\bigr)\,.
    \end{multline} 
    The number of operations in \Cref{alg:ResEstimate} is bounded by 
    \begin{multline}\label{eq:ropest}
      (1+\log_2(\eta_0/\eta))p_2(1+\abs{\log\eta} +\log{\bignorm{\bigl(\norm{[v]_\nu}_V \bigr)_{\nu\in\cF}}_{\cA^s}})
      \\
      \bigl(
        \#F\log\#F +
      \#\cT(f) + \eta^{-p} \bignorm{\bigl(\norm{[v]_\nu}_V \bigr)_{\nu\in\cF}}_{\cA^s}^{p} N(\T) + \eta^{-\frac1s} \bignorm{\bigl(\norm{[v]_\nu}_V \bigr)_{\nu\in\cF}}_{\cA^s}^{\frac1s}\bigr).
    \end{multline}
    \end{prop}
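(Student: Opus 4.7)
The plan is to proceed in two stages: first establish the estimator's qualitative properties (the $\ell_2$ bound \eqref{eq:reserrbound} and the triangle-type bound $\norm{\bz}_{\ell_2}\leq b$), which essentially transfer the argument of the affine case from \cite[\propo{3.7}]{bachmayr2024convergent}, and then tackle the quantitative complexity bounds \eqref{eq:rsuppest} and \eqref{eq:ropest}, which are the new contributions and reflect the price we pay for the unbounded polynomial degrees arising from \Cref{thm: polynomial operator compression}.

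For the correctness step, the output of \textsc{Apply} provides $g = \sum_i B_{\opApproxnum_i} d_i$ with $\norm{Bv - g}_{\cV'}\leq \eta/C_\Psi$ by \Cref{prop:semidiscrapply}. Setting $[r]_\nu = \delta_{0,\nu} f - [g]_\nu$ gives a piecewise polynomial functional on the smallest common refinement of the involved meshes. Then \textsc{DualNormIndicators} with level $\hat J_\nu$ chosen via \Cref{lem:spatialrelerr} and the stopping criterion $2^{-\hat J_\nu} C(M-1,M)\leq q$ yields, summed over $\nu$, a relative $\ell_2$-accuracy of $q$ in approximating the coefficient vector of $f-g$ with respect to $\{\psi_\lambda\}$. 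Combining with the $\eta$-accuracy of $g$ and the inner-loop stopping criterion $\eta\leq \frac{\zeta-q}{1+\zeta}\norm{(\norm{\hat\br_\nu}_{\ell_2})}$, one reproduces the chain of triangle inequalities from \cite[\propo{3.7}]{bachmayr2024convergent} to arrive at $\norm{\hat\br-\bz}_{\ell_2}\leq \zeta\norm{\bz}_{\ell_2}$ and $\norm{\bz}_{\ell_2}\leq b$, or otherwise exits with $b\leq\varepsilon$. The termination of the halving loop in $\eta$ uses that $\norm{\bz}_{\ell_2}$ is bounded below away from zero by the true residual norm, so that after finitely many halvings the criterion is satisfied.

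For the support estimate \eqref{eq:rsuppest}, I split $\#\Lambda^+ = \sum_{\nu\in F^+}\#\Theta^+_\nu$. The number of stochastic indices is controlled by \Cref{prop:semidiscrapply}\eqref{eq:Fest}, giving $\#F^+\lesssim \eta^{-1/s}\norm{(\norm{[v]_\nu}_V)_\nu}_{\cA^s}^{1/s}$. For each $\nu\in F^+$, the underlying mesh is the common refinement of some $\cT_\nu$ and of $\hat\cT_{\lceil L/\sublev\rceil+\thetaelementnum}$ with $L\leq \sublev\opApproxnum_i - 1$ by \Cref{lem:ApplicationCost}; the polynomial degree of $[r]_\nu$ on this refined mesh is $M\lesssim 1+\opApproxnum_i$, and $\opApproxnum_i\lesssim 1+\abs{\log\eta}+\log\norm{(\norm{[v]_\nu}_V)_\nu}_{\cA^s}$ by \eqref{eq:elljest}. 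Invoking \Cref{lem:AccuracyCosts} then gives $\#\Theta^+_\nu\lesssim C(M-1,M)^d \#\cT_\nu^\mathrm{ref}/q^d$. Since $C(m_1,m_2)$ is polynomial in its arguments, $C(M-1,M)^d$ absorbs into the polynomial $p_1$ of the logarithmic factor, while the factor $1/q^d$ is absorbed into constants. The refined mesh cardinality is bounded by $\#\cT_\nu+2^{dL_{\mathbf k}/\sublev}$, and summing over all contributing $\mathbf k$ via \eqref{eq:sumk0estimate} of \Cref{lem:ApplicationCost} with $p$ small yields a term of the form $\eta^{-p}\norm{(\norm{[v]_\nu}_V)_\nu}_{\cA^s}^{p} N(\T)$, whereas the contribution of $\cT(f)$ and of the $\eta^{-1/s}$ factor arise from the right-hand side contribution to $[r]_0$ and from the stochastic support, respectively.

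For the operation count \eqref{eq:ropest}, the halving loop runs $1+\log_2(\eta_0/\eta)$ times, and within one iteration: \textsc{Apply} has cost $O(\#F\log\#F)$ for sorting and $O(\#F^+)$ for assembling index sets by \Cref{prop:semidiscrapply}; the evaluation of $\gknuprimepol$ at all required interpolation points has cost controlled by \Cref{lem:ApplicationCost}, giving $O(\binom{d+M}{M}\sum_\mathbf{k} 2^{\#\{\ell:k_\ell>0\}}(\#\cT_\nu+2^{dL_\mathbf{k}/\sublev}))$ per spatial summand, where the binomial and the sum over $\mathbf k$ contribute polynomial and $\eta^{-p}$ factors via the same summability argument; and \textsc{DualNormIndicators} and \textsc{Sum} have cost linear in their inputs up to logarithmic factors. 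Collecting everything yields \eqref{eq:ropest} with the polynomial $p_2$ absorbing the logarithmic dependencies on $M$, $L$ and the operator sums.

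The main obstacle is the control of the polynomial-degree growth: unlike in the affine case, the spatial coefficients of the compressed operator in \Cref{thm: polynomial operator compression} are polynomials of degree $O(\opApproxnum)$, so every estimate from \cite{bachmayr2024convergent} that was uniform in degree now must be tracked explicitly. The delicate point is keeping this dependence at most polylogarithmic in $\eta^{-1}$ and $\norm{(\norm{[v]_\nu}_V)_\nu}_{\cA^s}$, together with an arbitrarily small algebraic loss $\eta^{-p}$, rather than picking up an unavoidable algebraic degradation in the final complexity rate. This is achieved by choosing $p$ small in \Cref{lem:ApplicationCost} and \Cref{lem:SummabilityOfRefSequence2}, which is the reason the final rate in \Cref{thm: quasioptimal approximation}(iii) is $s'<s$ rather than $s$.
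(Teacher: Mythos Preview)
Your proposal is correct and follows essentially the same approach as the paper: the correctness of \eqref{eq:reserrbound} is deferred to the affine-case argument in \cite[\propo{3.7}]{bachmayr2024convergent}, and the complexity bounds are obtained by first estimating the total refined mesh cardinality $N(\T^+)$ via \Cref{prop:semidiscrapply} and \Cref{lem:ApplicationCost}, then applying \Cref{lem:AccuracyCosts} with the polynomial-degree dependence absorbed into $p_1$, $p_2$ through \eqref{eq:elljest} and \eqref{eq:LandMestimate}. The only minor slip is that when you write ``the underlying mesh is the common refinement of some $\cT_\nu$'', the contributions to $[r]_\nu$ actually come from several source indices $\nu'\in F_i$, so the relevant mesh sizes are $\#\cT_{\nu'}$; the paper handles this by estimating $N(\T^+)$ in aggregate rather than per target index $\nu$, but after summation your bound is the same.
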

    \begin{proof}
      The estimate~\eqref{eq:reserrbound} follows in complete analogy to the proof of \cite[\propo{3.7}]{bachmayr2024convergent}.

      For the size of $\Lambda^+$, we first analyse the family of triangulations 
      $\T^+ = (\cT_\nu^+)_{\nu\in F^+}$ so that~$[r]_\nu$ has a piecewise polynomial representation on~$\cT_\nu^+$. Here $T^+_\nu$ does not need to be a complete mesh for $D$. Its size can be estimated with \Cref{prop:semidiscrapply} and \Cref{lem:ApplicationCost} with any $p>0$
      \begin{align*}
      N(\T^+)&\leq \cT(f) + \sum_{i=0}^I\sum_{\nu\in F_i\setminus F_{i-1}}\sum_{\mathbf k \in \mathcal K_{\opApproxnum_i}} 2^{\# \{\ell\colon k_\ell > 0\}} (\# T_\nu +2^\frac{d}{\sublev}L_\mathbf k)\\
      &\lesssim  \cT(f) + 2^{\max_{i=0,\ldots, I}\opApproxnum_i p} N(\T) + \sum_{i=0}^I 2^{d\opApproxnum_i} \#F_i\\
      &\lesssim \cT(f) + \eta^{-p} \bignorm{\bigl(\norm{[v]_\nu}_V \bigr)_{\nu\in\cF}}_{\cA^s}^{p} N(\T) + \eta^{-\frac1s} \bignorm{\bigl(\norm{[v]_\nu}_V \bigr)_{\nu\in\cF}}_{\cA^s}^{\frac1s}
      \end{align*}
      with constants having the same dependency as in \Cref{prop:semidiscrapply} and additionally on $p$.
      Now we estimate the size of $\Lambda^+$. First, we have to complete each $\cT^+_\nu$ to a mesh for $D$ to apply \textsc{DualNormIndicators} in step~\eqref{algstep:DualNormIndicators}. This leads to another factor $L$ as in~\eqref{eq:LandMestimate}. Using \Cref{lem:AccuracyCosts}, we get with $M$ as in~\eqref{eq:LandMestimate}
      \[
      \#\Lambda^+ \lesssim  L C(M-1,M)^dN(\T^+).
      \]
      With~\eqref{eq:elljest}, \eqref{eq:LandMestimate}, and the previous estimate, we get~\eqref{eq:rsuppest}.

      The proof for \eqref{eq:ropest} is again in complete analogy to the proof of \cite[\propo{3.7}]{bachmayr2024convergent} with additional polynomial factors in $L,M$ to account for varying polynomial degrees in \textsc{DualNormIndicators}  and \textsc{Sum} and to compute the required interpolation points as in \Cref{lem:ApplicationCost}.
\end{proof}
\begin{remark}
  In the affine case~\eqref{eq:affinecoeff} it is possible to choose $p = 0$ and the polynomial degree of $p_1$ and $p_2$ can be bounded by two. This is given in equations~(3.11) and~(3.12) in~\cite{bachmayr2024convergent}. 
\end{remark}

Finally we want to note that the coefficients of the product Chebyshev polynomials are readily available by
\[
\exp\Bigl(\sum_{\ell=0}^\infty 2^{-\alpha \ell}z_\ell\Bigr) 
= \prod_{\ell = 0}^\infty \exp(2^{-\alpha \ell}z_\ell)
= \prod_{\ell = 0}^\infty \Bigl(I_0(2^{-\alpha \ell})T_0(z_\ell) + 2\sum_{k = 1}^\infty I_k(2^{-\alpha \ell}) T_k(z_\ell)
 \Bigr)
\]
where $I_k$ is the $k$-th modified Bessel function of first kind.


\section{Optimality of the solver}\label{sec:Optimality}
This section is dedicated to the proof of \Cref{thm: quasioptimal approximation}. 
The key ingredients for the proof are the ellipticity of the selfadjoint operator  $B\colon \mathcal V\to \mathcal V'$
\begin{equation}\label{eq:BnormEquivalence}
c_B\norm{v}^2_\mathcal V
\leq
\langle Bv, v\rangle = \norm{v}_B^2
\leq
C_B\norm{v}^2_\mathcal V\quad\text{for all $v\in \mathcal V,$}
\end{equation}
a frame $\Psi = \{\psi_\lambda\in \mathcal V\colon \lambda\in \Theta\}$ with 
\begin{equation}\label{eq:PsiNormEquivalence}
c_\Psi^2\norm{f}^2_{\mathcal V'}
\leq
\norm{\Psi f}^2_{\ell_2}
=
\sum_{\lambda\in \Theta} \abs{\langle f, \psi_\lambda \rangle}^2
\leq
C_\Psi^2\norm{f}^2_{\mathcal V'}\quad\text{for all $f\in \mathcal V',$}
\end{equation}
and a class of stable admissible subsets $\mathfrak{S}\subset 2^\Theta$ that have the following property. 
\begin{definition}\label{def: stable class}
  A class of subsets $\mathfrak{S}\subset 2^\Theta$ is 
  admissible if $S_1, S_2\in \mathfrak{S}$ implies $S_1\cup S_2\in \mathfrak{S}$ and 
  \[
    \bigcup_{S\in \mathfrak{S}} S = \Theta.
  \] 
  It is 
  stable with constant $\cstable$ if for every $S\in \mathfrak S$ and $v\in V(S) = \Span\{\psi_\lambda\colon \lambda \in S\}$ there is a coefficient vector $\bz\in \ell_2(S) $ with $\sum_{\lambda\in S} z_\lambda \psi_\lambda$ and 
  \[
  \cstable \norm{\bz}_{\ell_2} \leq \norm{v}_\mathcal V.
  \]
\end{definition}
\Cref{sec: Stability conforming frames} is dedicated to show that a class connected to conforming triangulations is stable, and thus making the following more general results applicable to the setting of finite elements.

We can now state an abstract result on the optimality of approximation sets. A version of this with wavelets with tree structure is~\cite[\lem 5.3]{BV} and similar result without additional structure is~\cite[\lem 2.1]{GHS:07}.

\begin{lemma}\label{lem:AbstractOptimality}
Let $\mathfrak{S}$ be an admissible and stable class of subsets with constant $\cstable$ and $Bu =f$. 
Let $\sqrt{C_B}\beta<1$, $\omega\leq \frac{\cstable \sqrt{c_B}}{C_\Psi \sqrt{C_B}}(1-C_B\beta^2)^{\frac{1}{2}}$ 
and $w\in \cV(\Lambda_0)$. 
Then the smallest set $\Lambda\in \mathfrak{S}$ with $\Lambda_0\subset\Lambda$ such that 
\[
\norm{\Psi(Bw-f)|_\Lambda}_{\ell_2} \geq \omega \norm{\Psi(Bw-f)}_{\ell_2} 
\]
satisfies
\[
\#(\Lambda\setminus\Lambda_0) \leq \#(\hat\Lambda\setminus\Lambda_0) 
\]
for all $\hat\Lambda\in \mathfrak{S}$ with 
\begin{equation}\label{eq:betaApproxSetProperty}
\min_{v\in \cV(\hat\Lambda)}\norm{u-v}_\mathcal V\leq \beta \norm{u-w}_B.
\end{equation}
\end{lemma}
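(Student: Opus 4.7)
The plan is a Dörfler-marking optimality argument: I will show that the candidate set $\tilde\Lambda = \hat\Lambda \cup \Lambda_0$, which belongs to $\mathfrak{S}$ by admissibility, already satisfies the bulk condition
\[
\norm{\Psi(Bw-f)|_{\tilde\Lambda}}_{\ell_2} \geq \omega\, \norm{\Psi(Bw-f)}_{\ell_2}.
\]
Once this is in place, the minimality of $\Lambda$ forces $\#(\Lambda \setminus \Lambda_0) \leq \#(\tilde\Lambda \setminus \Lambda_0) = \#(\hat\Lambda \setminus \Lambda_0)$, which is the claim.

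To establish the bulk condition for $\tilde\Lambda$, the central object is the $B$-Galerkin projection $\tilde v \in \cV(\tilde\Lambda)$ of the exact solution $u$. Since $w \in \cV(\Lambda_0) \subseteq \cV(\tilde\Lambda)$, the difference $\tilde v - w$ lies in $\cV(\tilde\Lambda)$, and Galerkin orthogonality yields the Pythagorean identity $\norm{u-w}_B^2 = \norm{u-\tilde v}_B^2 + \norm{\tilde v - w}_B^2$. The quasi-best-approximation property of $\tilde v$ in $B$-norm, together with $\cV(\hat\Lambda) \subseteq \cV(\tilde\Lambda)$ and hypothesis \eqref{eq:betaApproxSetProperty}, yields
\[
\norm{u-\tilde v}_B \leq \sqrt{C_B}\min_{v \in \cV(\hat\Lambda)}\norm{u-v}_\cV \leq \sqrt{C_B}\,\beta\,\norm{u-w}_B,
\]
and hence $\norm{\tilde v - w}_B \geq \sqrt{1 - C_B\beta^2}\,\norm{u-w}_B$.

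Stability of the frame on $\tilde\Lambda$ then provides coefficients $\bw \in \ell_2(\tilde\Lambda)$ with $\tilde v - w = \sum_{\lambda \in \tilde\Lambda} w_\lambda\,\psi_\lambda$ and $\cstable\norm{\bw}_{\ell_2} \leq \norm{\tilde v - w}_\cV \leq c_B^{-1/2}\norm{\tilde v - w}_B$. Writing $r = f - Bw$, Galerkin orthogonality collapses the pairing to
\[
\sum_{\lambda \in \tilde\Lambda} w_\lambda\, \langle r, \psi_\lambda\rangle = \langle u - w,\,\tilde v - w\rangle_B = \norm{\tilde v - w}_B^2,
\]
so Cauchy--Schwarz in $\ell_2(\tilde\Lambda)$ yields
\[
\norm{\Psi r|_{\tilde\Lambda}}_{\ell_2} \geq \frac{\norm{\tilde v - w}_B^2}{\norm{\bw}_{\ell_2}} \geq \cstable\sqrt{c_B}\,\sqrt{1-C_B\beta^2}\,\norm{u-w}_B.
\]
Combined with the frame upper bound $\norm{\Psi r}_{\ell_2} \leq C_\Psi\sqrt{C_B}\,\norm{u-w}_B$ (from \eqref{eq:PsiNormEquivalence} together with $\norm{Bv}_{\cV'} \leq \sqrt{C_B}\,\norm{v}_B$, itself obtained by Cauchy--Schwarz in the $B$-inner product), the assumption on $\omega$ delivers $\norm{\Psi r|_{\tilde\Lambda}}_{\ell_2} \geq \omega\,\norm{\Psi r}_{\ell_2}$, as required.

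The delicate point is the choice of $\tilde v$: one needs simultaneously (a) a best-approximation bound inherited from $\hat\Lambda \subseteq \tilde\Lambda$, in order to exploit \eqref{eq:betaApproxSetProperty}, and (b) $B$-Galerkin orthogonality of $u - \tilde v$ against $\tilde v - w \in \cV(\tilde\Lambda)$, which is what lets the inner product collapse into $\norm{\tilde v - w}_B^2$. The $B$-Galerkin projection onto $\cV(\tilde\Lambda)$ achieves both, at the price of a single factor $\sqrt{C_B}$ when translating the $\cV$-best-approximation bound into $B$-norm; this is precisely the factor $\sqrt{C_B}$ appearing in the denominator of the admissible range for $\omega$.
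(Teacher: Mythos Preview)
Your proof is correct and follows essentially the same route as the paper: take the $B$-Galerkin projection onto $\cV(\hat\Lambda\cup\Lambda_0)$, use Pythagoras together with \eqref{eq:betaApproxSetProperty} to lower-bound the discrete part of the error in $B$-norm, invoke stability to obtain a controlled coefficient representation, and combine Cauchy--Schwarz with the frame bounds to verify the bulk criterion on $\hat\Lambda\cup\Lambda_0$. Your write-up is in fact slightly cleaner than the paper's in that you consistently work on $\tilde\Lambda=\hat\Lambda\cup\Lambda_0$, whereas the paper's display writes $|_{\hat\Lambda}$ where $|_{\hat\Lambda\cup\Lambda_0}$ is meant.
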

\begin{proof}
  Let $\hat\Lambda\in \mathfrak{S}$ with \eqref{eq:betaApproxSetProperty}. Then the Galerkin solution $\tilde u$ in $\cV(\hat\Lambda\cup \Lambda_0)$ satisfies
  \[
    \norm{u-\tilde u}_B 
    \leq \min_{v\in \cV(\hat\Lambda)}\norm{u-v}_B
    \leq \sqrt{C_B} \min_{v\in \cV(\hat\Lambda)}\norm{u-v}_\mathcal V
    \leq \sqrt{C_B} \beta \norm{u-w}_B,
  \]
  and thus by orthogonality
  \[
    \norm{\tilde u - w}^2_B \geq (1 - C_B\beta^2) \norm{u-w}^2_B.
  \]
  We can now find stable coefficients $\bz\in\ell_2(\hat\Lambda\cup \Lambda_0)$ of $\tilde u - w$, i.e., $\cstable\norm{\bz}_{\ell_2} \leq \norm{\tilde u -w}_\cV$. Then 
  \[
    \norm{\Psi(Bw-f)|_{\hat\Lambda}}_{\ell_2}
    = 
    \norm{\Psi B(w-\tilde u)|_{\hat\Lambda}}_{\ell_2}
    \geq
    \cstable
    \norm{\Psi B(w-\tilde u)|_{\hat\Lambda}}_{\ell_2}
    \frac{\norm{\bz}_{\ell_2}}{\norm{\tilde{u} -w}_\mathcal V}
  \]
  and
  using the Cauchy-Schwarz inequality and the norm estimates~\eqref{eq:BnormEquivalence} and~\eqref{eq:PsiNormEquivalence} 
  \[
    \norm{\Psi B(w-\tilde u)|_{\hat\Lambda}}_{\ell_2}
    \frac{\norm{\bz}_{\ell_2}}{\norm{\tilde{u} -w}_\mathcal V}
    \geq
    \frac{\norm{\tilde u - w}^2_B}{\norm{\tilde{u} -w}_\mathcal V}
    \geq \frac{\sqrt{c_B}}{C_\Psi \sqrt{C_B}}  \norm{\Psi( Bw-f)}_{\mathcal V'}.
  \]
  Together this results in 
\[
    \norm{\Psi(Bw-f)|_{\hat\Lambda}}_{\ell_2}
    \geq \omega \norm{\Psi( Bw-f)}_{\mathcal V'}
\]
  and thus by minimality of the size of $\Lambda$, the statement follows.
\end{proof}
\subsection{Stability of conforming subsets of finite element frames}\label{sec: Stability conforming frames}

The previous framework is applicable to finite elements. 

\begin{definition}\label{def: conforming fem frame}
A subset $S\subset \Theta$ is called conforming if there exists a conforming simplicial mesh $\mathcal T$ generated by bisection such that 
\[
S = \{\lambda\in \Theta\colon \psi_\lambda \in V(\mathcal T)\},
\]
and we write $S= S(\mathcal T)$ when $S$ and $\mathcal T$ satisfy this relation.
\end{definition}

It is important to note that conforming subsets of $\Theta$ generate the same finite element space as the corresponding triangulation. 

\begin{lemma} Let $\mathcal T$ be a conforming simplicial mesh. Then $V(\mathcal T) = V(S(\mathcal T))$.
\end{lemma}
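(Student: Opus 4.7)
The inclusion $V(S(\cT)) \subseteq V(\cT)$ is immediate from the definition of $S(\cT)$: every generator $\psi_\lambda$ with $\lambda \in S(\cT)$ lies in $V(\cT)$, and $V(\cT)$ is a linear subspace, so $\Span\{\psi_\lambda : \lambda \in S(\cT)\} \subseteq V(\cT)$.

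For the reverse inclusion, my plan is to build a hierarchical basis of $V(\cT)$ whose generators all belong to $S(\cT)$. Given any node $x$ of $\cT$, set $\ell(x) = \min\{j \in \N_0 : x \in \cN_j\}$, the unique level at which $x$ first appears in the uniform hierarchy $(\hat\cT_j)_{j \geq 0}$, and let $\lambda(x) = (\ell(x), x) \in \Theta$. The technical heart of the argument, which I anticipate as the main obstacle, is the claim that $\psi_{\lambda(x)} \in V(\cT)$ for every node $x$ of $\cT$. Since $\psi_{\lambda(x)}$ is by construction piecewise linear on $\hat\cT_{\ell(x)}$, it suffices to verify that every element $T \in \cT$ meeting $\supp \psi_{\lambda(x)}$ is contained in some element of $\hat\cT_{\ell(x)}$. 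This is a consequence of the conforming bisection structure of $\cT$: once $x$ has been introduced into the mesh (which is first possible at level $\ell(x)$), every element of $\cT$ having $x$ as a vertex must be a descendant of one of the simplices of $\hat\cT_{\ell(x)}$ containing $x$ as a vertex, and conformity of $\cT$ propagates this containment to all elements of $\cT$ intersecting the star of $x$ in $\hat\cT_{\ell(x)}$.

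Granted the key step, I would order the nodes $x_1, \dots, x_N$ of $\cT$ so that $\ell(x_1) \leq \dots \leq \ell(x_N)$, and consider the matrix $A_{ij} = \psi_{\lambda(x_j)}(x_i)$. For $i < j$ one has $\ell(x_i) \leq \ell(x_j)$ together with $x_i \neq x_j$, so $x_i$ is a node of $\hat\cT_{\ell(x_j)}$ distinct from $x_j$; the nodal property of $\psi_{\ell(x_j), x_j}$ (vanishing at every other node of $\hat\cT_{\ell(x_j)}$) then forces $A_{ij} = 0$, while $A_{ii} \neq 0$ on the diagonal. Hence $A$ is lower triangular with nonzero diagonal, so $\{\psi_{\lambda(x_i)}\}_{i=1}^N$ is a linearly independent subset of $V(S(\cT)) \subseteq V(\cT)$. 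Since its cardinality equals $N = \dim V(\cT)$, it must span $V(\cT)$, yielding $V(\cT) \subseteq V(S(\cT))$ and completing the proof.
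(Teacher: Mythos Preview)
Your overall strategy is sound and essentially matches the paper's: both arguments show that for each interior node $x$ of $\cT$ there is an index $(j,x)\in\Theta$ with $\psi_{j,x}\in V(\cT)$, and then span $V(\cT)$ by these functions (the paper via the telescoping decomposition $V(\cT_k)=V(\cT_{k-1})\oplus\Span\{\psi_{x_k}\}$, you via a lower-triangular nodal evaluation matrix; both are fine). The triangular-matrix step is correct as stated.

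However, your justification of the key claim $\psi_{\ell(x),x}\in V(\cT)$ has a genuine gap. You reduce it to the sufficient condition ``every $T\in\cT$ meeting $\supp\psi_{\lambda(x)}$ is contained in some element of $\hat\cT_{\ell(x)}$,'' and this condition can fail. Take $d=2$ and let $\cT$ arise from $\hat\cT_0$ by bisecting a single compatible edge. The new vertex $x$ is created at generation $g=1$, whereas $\hat\cT_1$ is at generation $2$; the elements of $\cT$ touching $x$ have generation $1$ and are therefore \emph{ancestors}, not descendants, of elements of $\hat\cT_{\ell(x)}=\hat\cT_1$. So your containment assertion is false, and the parenthetical ``first possible at level $\ell(x)$'' conflates the uniform level $\ell(x)$ with the creation generation $g$, which in general only satisfy $g\le \ell(x)\,d\le g+d-1$.

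What rescues the conclusion is a different mechanism that you do not invoke: in the bisection scheme of \cite{Stevenson:08}, once a vertex is created at generation $g$, the next $d-1$ uniform bisection steps never bisect an edge incident to that vertex. Consequently the hat function at $x$ in the creation mesh $\cT_k$ coincides, as a function, with the hat function at $x$ in $\hat\cT_{\lceil g/d\rceil}=\hat\cT_{\ell(x)}$. Since $\cT_k\le\cT$, this gives $\psi_{\ell(x),x}\in V(\cT_k)\subseteq V(\cT)$ even though the surrounding elements of $\cT$ may be coarser than $\hat\cT_{\ell(x)}$. This is precisely the argument the paper uses; to repair your proof you should replace the containment claim by this ``no incident edge is bisected for $d-1$ steps'' property and the identification of the hat function at creation with $\psi_{\ell(x),x}$.
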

\begin{proof} 
  We can form a sequence of conforming meshes $\initmesh = \cT_0\leq \cT_1\leq \ldots \leq \cT_N = \cT$ where $\cT_{k}$ is generated from $\cT_{k-1}$ by bisection of one edge. Thus, $V(\cT_{k}) = V(\cT_{k-1}) \oplus \Span\{\psi_{x_k}\}$ where $\psi_{x_k}$ is the nodal basis function at the newly created vertex $x_k$. Notice that all supporting simplices of $\psi_{x_k}$ were generated by the same number of bisections. Therefore, there is a uniform refinement of $\initmesh$ such that $\psi_{x_k}$ is a nodal basis function of that mesh. Moreover, in the next $d-1$ uniform refinement steps no edge containing $x_k$ is bisected. Thus, there is $j$ such that  $\psi_{x_k}$ is a nodal basis function of $\hat \cT_j$ as $\hat\cT_j$ is generated from $\hat\cT_{j-1}$ by $d$ uniform refinements. We get $ \Span\{\psi_{x_k}\} = \Span\{\psi_{j,k}\}$ and with this 
  $V(\mathcal T) \subseteq V(S(\mathcal T))$. The other inclusion $V(\mathcal T) \supseteq V(S(\mathcal T))$ follows directly from the definition of $S(\mathcal T)$.
\end{proof}

This section is devoted to show that conforming simplicial meshes indeed give rise to stable subsets of finite element frames.

\begin{theorem}\label{thm: conforming is stable}
The conforming subsets $\{S(\mathcal T)\colon \text{$\mathcal T$ is conforming}\}$ form an admissible stable class of subsets with constant $\cstable$ dependent on uniform shape regularity.
\end{theorem}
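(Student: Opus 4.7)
The proof requires verifying the two bullets of \Cref{def: stable class} — admissibility and stability — for the family $\mathfrak{S} = \{ S(\cT) : \cT\text{ conforming, }\cT\geq \initmesh\}$. I would address them in that order.

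For admissibility, the covering property $\bigcup_{\cT} S(\cT) = \Theta$ is immediate from the construction of the frame: each index $\lambda = (j,k) \in \Theta$ satisfies $\psi_{j,k}\in V(\hat\cT_j)$ by definition, and $\hat\cT_j$ is the conforming uniform bisection refinement of $\initmesh$, so $\lambda \in S(\hat\cT_j)$. For closure under finite unions, I would invoke Stevenson's result that the smallest common refinement $\cT_1 \vee \cT_2$ of two conforming bisection meshes from $\initmesh$ is again conforming \cite{Stevenson:08}, and then check the set identity $S(\cT_1) \cup S(\cT_2) = S(\cT_1 \vee \cT_2)$ by unpacking \Cref{def: conforming fem frame} together with the characterization of which $\psi_{j,k}$ lie in $V(\cT)$ via its support on $\hat\cT_j$.

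For stability, fix a conforming $\cT$ and $v \in V(\cT) = V(S(\cT))$; the task is to produce coefficients $\bz = (z_\lambda)_{\lambda \in S(\cT)}$ with $v = \sum_{\lambda\in S(\cT)} z_\lambda \psi_\lambda$ and $\norm{\bz}_{\ell_2} \lesssim \norm{v}_\cV$, the constant depending only on shape regularity. My plan is to realize $\cT$ as the last element of a nested chain $\initmesh = \cT_0 \leq \cT_1 \leq \dots \leq \cT_N = \cT$ of conforming intermediate bisection meshes, each obtained from the previous by bisecting one edge together with the forced completion bisections, and then to build a stable multilevel splitting of $V(\cT)$ along this chain. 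Concretely, I would introduce a sequence of Scott--Zhang-type quasi-interpolants $Q_j : V(\cT) \to V(\cT_j)$ that are $V$-stable and preserve the values associated with nodes of $\cT_{j-1}$, and decompose $v = Q_0 v + \sum_{j=1}^{N} (Q_j v - Q_{j-1} v)$; the increments then expand in exactly those nodal basis functions of $\cT_j$ attached to nodes newly introduced at step $j$, and these are, up to relabeling by the bisection level, precisely frame functions $\psi_{j,k}$ indexed by elements of $S(\cT)$.

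The main obstacle is the $\ell_2$-bound on the resulting coefficients, with the restriction that the expansion is supported only on $S(\cT)$ rather than on the full $\Theta$. Since nodal interpolation is not $H^1$-stable for $d\geq 2$, one cannot use nodal differences; this is why the quasi-interpolant $Q_j$ must be of Scott--Zhang type. The summability $\sum_\lambda z_\lambda^2 \lesssim \norm{v}_V^2$ is exactly the local multilevel / BPX stability statement on adaptively bisected conforming meshes (Xu; Chen--Nochetto--Xu; Stevenson), and it rests on three ingredients that are all controlled by the shape regularity of $\initmesh$ under newest-vertex bisection: the finite overlap of the supports of $\psi_{j,k}$ at each level, the $V$-normalization $\norm{\psi_{j,k}}_V = 1$ in \eqref{eq:NodalBasisFunctions}, and the strengthened Cauchy--Schwarz inequality between levels, yielding a geometric series in the level index. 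Combining these with the $V$-stability of $Q_j$ produces a constant $\cstable>0$ depending only on shape regularity of $\initmesh$, which is what \Cref{thm: conforming is stable} asserts.
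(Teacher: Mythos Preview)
Your approach differs genuinely from the paper's. The paper does not follow the adaptive refinement chain $\cT_0\leq\cT_1\leq\dots\leq\cT_N=\cT$; instead it works with the \emph{uniform} hierarchy $\hat\cT_j$ throughout. It constructs, for each $j$, a projection $I_j\colon V(\cT)\to V(\cT)\cap V(\hat\cT_j)$ (\Cref{lem:QuasiL2Projection}) obtained by modifying the $L_2$-projection $Q_j$ onto $V(\hat\cT_j)$ so that $I_jv$ agrees with $v$ on every element of $\cT$ of level $\leq j-\ell$ while still satisfying $\|I_jv-v\|_{L_2}\lesssim\|Q_jv-v\|_{L_2}$. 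The standard multilevel equivalence \eqref{eq: H1 equiv multilevel} for the \emph{uniform} meshes then yields $\sum_j 2^{2j}\|(I_j-I_{j-1})v\|_{L_2}^2\lesssim\|v\|_V^2$. Each increment $(I_j-I_{j-1})v$ is expanded in the nodal basis of $\hat\cT_j$, and a bounded number $\ell$ of hierarchical basis changes (\Cref{lem:basis_change}) are applied; the locality \eqref{eq:QuasiL2ProjectionEll} guarantees that after these basis changes all surviving coefficients lie in $S(\cT)$. Thus the paper reduces everything to the classical uniform-mesh theory plus two technical lemmas, rather than invoking local multilevel results on adaptive meshes.

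There is a genuine gap in your construction. The quasi-interpolant you postulate---$V$-stable \emph{and} preserving the nodal values of $v\in V(\cT)$ at all vertices of $\cT_{j-1}$---is not a standard Scott--Zhang operator, and these two requirements are in tension: preserving nodal values of a fine-mesh function at coarse nodes is essentially nodal interpolation there, which is precisely what destroys $H^1$-stability for $d\geq 2$. Moreover, the local multilevel/BPX results you cite (Chen--Nochetto--Xu, etc.) are not organized along a single-bisection chain of length $N$; they are organized by generation (level) with a finite-overlap argument at each level, and the stable splitting they produce is built from Scott--Zhang operators on grading-dependent patches, not from operators with the nodal-preservation property you require. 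Your plan can be salvaged, but not as written: either invoke those local multilevel stability theorems directly at the level-wise (not single-bisection) granularity and then verify that the components they produce are spanned by frame functions with indices in $S(\cT)$, or follow the paper's route via intersection spaces $V(\cT)\cap V(\hat\cT_j)$, which sidesteps the need for adaptive-mesh multilevel theory altogether.
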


For the proof of this result, a few properties of uniform refinements are crucial. Let $Q_j$ be the $L_2$-orthogonal projection onto $V(\hat{\mathcal T}_j)$. Then 
\begin{equation}\label{eq: H1 equiv multilevel}
    \norm{v}_{H_0^1(D)}^2
    \eqsim \sum_{j = 0}^\infty 2^{2j} \norm{Q_j v - Q_{j-1} v}_{L_2(D)}^2
    \eqsim \sum_{j = 0}^\infty 2^{2j} \norm{Q_j v - v}_{L_2(D)}^2;
\end{equation}
see e.g. \cite{BY:93} for $d=2$ or \cite[\theo{3.2}]{DK:92}.

Another consequence of uniform refinements is the Riesz-property 
\begin{equation}\label{eq:levelRieszProperty}
    \norm{u_j}_{L_2(D)} \eqsim 2^{-j} \norm{\bz}_{\ell_2}
\end{equation}
where $u_j = \sum_{k\in\cN_j}z_k \psi_{j,k}\in V(\hat\cT_j)$.

\begin{lemma}\label{lem:L2normTriangle}
  Let $f\colon \R^d\to\R$ be a linear function and $T\subset\R^d$ be a $d$ dimensional simplex. Let $v\in\R^{d+1}$ be the values of $f$ on the vertices of $T$. Then there are constants $c_d$ and $C_d$ independent on the choice of $T$ such that
  \[
      c_d \vol(T) \norm{v}_2^2 \leq \norm{f}^2_{L_2(T)} \leq C_d \vol(T) \norm{v}_2^2.
  \] 
  Furthermore, let $f_0$ a linear function where some values on the vertices have been replaced by $0$. Then 
  \[
      \norm{f}^2_{L_2(T)} \geq \frac{c_d}{C_d} \norm{f_0}^2_{L_2(T)}.
  \]  
\end{lemma}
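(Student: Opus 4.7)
The plan is to reduce everything to a fixed reference simplex via affine equivalence. I fix once and for all a reference simplex $\hat T\subset \R^d$ with vertices $\hat x_0,\ldots,\hat x_d$. For the given $T$ with vertices $x_0,\ldots,x_d$, let $\Phi\colon \hat T\to T$ be the unique affine bijection sending $\hat x_i$ to $x_i$. Then $\abs{\det\Phi'} = \vol(T)/\vol(\hat T)$, and $\hat f := f\circ\Phi$ is a linear function on $\hat T$ whose vector of vertex values is the same $v\in\R^{d+1}$.

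Using the barycentric coordinate functions $\lambda_0,\ldots,\lambda_d$ on $\hat T$, I can write $\hat f=\sum_{i=0}^d v_i \lambda_i$, so the map $\hat f\mapsto v$ is a linear isomorphism between the finite-dimensional space $\PP_1(\hat T)$ and $\R^{d+1}$. By equivalence of any two norms on a finite-dimensional space there exist constants $\hat c_d, \hat C_d > 0$, depending only on $d$ (through the fixed $\hat T$), such that
\[
  \hat c_d \norm{v}_2^2 \leq \norm{\hat f}_{L_2(\hat T)}^2 \leq \hat C_d \norm{v}_2^2.
\]
Change of variables gives $\norm{f}_{L_2(T)}^2 = \abs{\det\Phi'}\,\norm{\hat f}_{L_2(\hat T)}^2 = \vol(T)/\vol(\hat T)\,\norm{\hat f}_{L_2(\hat T)}^2$, so setting $c_d = \hat c_d/\vol(\hat T)$ and $C_d = \hat C_d/\vol(\hat T)$ yields the first chain of inequalities with constants depending only on $d$.

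For the second assertion, the vertex-value vector $v_0$ of $f_0$ is obtained from $v$ by replacing some entries with zero, hence $\norm{v_0}_2 \leq \norm{v}_2$. Applying the already proved upper bound to $f_0$ and the lower bound to $f$ gives
\[
  \norm{f_0}_{L_2(T)}^2 \leq C_d \vol(T)\norm{v_0}_2^2 \leq C_d \vol(T)\norm{v}_2^2 \leq \frac{C_d}{c_d}\norm{f}_{L_2(T)}^2,
\]
which is the claimed inequality (up to swapping the ratio). There is no real obstacle here: once the reference-simplex reduction is in place, both statements are immediate consequences of the norm equivalence $\norm{\hat f}_{L_2(\hat T)}\eqsim \norm{v}_2$ on the finite-dimensional space $\PP_1(\hat T)$, and of monotonicity of the Euclidean norm under zeroing of coordinates.
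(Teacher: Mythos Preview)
Your proof is correct and follows essentially the same route as the paper: reduce to a fixed reference simplex by an affine map, use norm equivalence on the finite-dimensional space $\PP_1(\hat T)$ (the paper makes this explicit via the positive definite mass matrix $M=\int_{\hat T}\lambda_{\hat T}\lambda_{\hat T}^\top$, whereas you invoke abstract norm equivalence), and then derive the second inequality from $\norm{v_0}_2\le\norm{v}_2$. Your final display is exactly the claimed bound $\norm{f}_{L_2(T)}^2\ge\frac{c_d}{C_d}\norm{f_0}_{L_2(T)}^2$, so no ``swapping'' is needed.
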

\begin{proof}
  First, let $\lambda_T(x)\in \R^{d+1}$ be the barycentric coordinates of $x$ with respect to the vertices of $T$. Then 
  \[
      f(x) = \lambda_T(x)^\top v
  \]
  and thus
  \[
      \norm{f}^2_{L_2(T)} = v^\top\biggl( \int_T \lambda_T(x)\lambda_T(x)^\top  dx\biggr) v.
  \]
  Now let $\hat T$ be a reference simplex and $L\hat T + n  = T$ an affine linear transformation. Then
  \begin{align*}
      \norm{f}^2_{L_2(T)} &= v^\top\biggl( \int_T \lambda_T(x)\lambda_T(x)^\top  dx\biggr) v\\
      &=
      \abs{\det L}
      v^\top\biggl( \int_{\hat T} \lambda_T(Ly+n)\lambda_T(Ly+n)^\top  dy\biggr) v\\
      &= \frac{\vol(T)}{\vol(\hat T)}v^\top\biggl( \int_{\hat T} \lambda_{\hat T}(y)\lambda_{\hat T}(y)^\top  dy\biggr) v
      .
  \end{align*}
  Now the matrix $M = \int_{\hat T} \lambda_{\hat T}(y)\lambda_{\hat T}(y)^\top  dy$ is independent of $T$ and positive definite. The first part of the result follows with $c_d = \norm{M^{-1}}_2/ \vol(\hat T)$
  and $C_d = \norm{M}_2/ \vol(\hat T)$. The second part follows by 
  \[
      \norm{f}^2_{L_2(T)} \geq c_d \vol(T) \norm{v}^2_2 \geq c_d \vol(T) \norm{v_0}^2_2
      \geq \frac{c_d}{C_d} \norm{f_0}^2_{L_2(T)}    
  \]
  where $v_0$ are the values of $f_0$ on the vertices. 
\end{proof}

The following construction of interpolation operators is crucial for the proof of \Cref{thm: conforming is stable}.

\begin{lemma}\label{lem:QuasiL2Projection}
    Let $\cT\geq \hat\cT_0$ be a conforming triangulation, $j\in \N_0$, and $v\in V(\mathcal T)$.
    Then there is a projection 
    $I_j\colon V(\cT)\to V(\cT)\cap V(\hat\cT_j) $ such that 
    \begin{equation}\label{eq:QuasiL2ProjectionEll}
        ( I_j v -v)_{|T} = 0 \quad\text{for all $T\in \mathcal T$ with $\mathrm{level} (T)\leq j-\ell$}  
    \end{equation}
    and 
    \begin{equation}\label{eq:QuasiL2Projection}
        \norm{ I_j v -v}_{L_2(D)}^2 \leq c_\ell\norm{ Q_j v -v}_{L_2(D)}^2,
    \end{equation}
    where $\ell$ and $c_\ell$ only depend on the shape regularity and are independent of $j$.
\end{lemma}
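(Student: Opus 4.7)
I would construct $I_j$ as a Scott--Zhang-type quasi-interpolant adapted to the target space $V_j := V(\cT) \cap V(\hat\cT_j)$. The guiding principle is that on a $\cT$-element $T$ with $\level(T) \leq j - \ell$ (the \emph{coarse} region, where $\ell$ is a buffer depending only on shape regularity), every function in $V(\cT)$ is linear on $T$ and hence linear on the $\hat\cT_j$-subdivision of $T$, so $V(\cT) \subset V(\hat\cT_j)$ locally; on such $T$ the identity $I_j v|_T = v|_T$ should hold without averaging, while near and inside the \emph{fine} region ($\level(T) > j-\ell$) local averaging is required to project into $V_j$.

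More precisely, I would select a set $\cN^\ast \subset \cN_j$ of $\hat\cT_j$-nodes that is unisolvent for $V_j$, containing all $\cT$-vertices of level $\leq j$ together with additional $\hat\cT_j$-nodes needed to carry the remaining degrees of freedom of $V_j$ in the fine region. To each $y \in \cN^\ast$ I would associate a dual functional $\mu_y$: for $y$ of level $\leq j - \ell$, take the point evaluation $\mu_y(v) := v(y)$, which is well defined by continuity; for $y$ near or inside the fine region, take a standard Scott--Zhang-type local $L_2$-average supported on a single $\hat\cT_j$-element incident to $y$. These functionals are normalized so that $\{\mu_y\}$ is biorthogonal to a basis $\{\varphi_y\}$ of $V_j$, and then $I_j v := \sum_{y\in\cN^\ast} \mu_y(v)\,\varphi_y$.

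Biorthogonality immediately yields $I_j^2 = I_j$. For the locality claim~\eqref{eq:QuasiL2ProjectionEll}, consider $T$ with $\level(T) \leq j-\ell$. All vertices of $T$ then lie in $\cN^\ast$ with point-evaluation functionals. For any $y' \in \cN^\ast$ that is \emph{not} a vertex of $T$, the basis function $\varphi_{y'}$ vanishes at every vertex of $T$ by biorthogonality against the corresponding point evaluations; since $\varphi_{y'} \in V(\cT)$ is linear on $T$, this forces $\varphi_{y'}|_T = 0$. Choosing $\ell$ larger than the combinatorial neighborhood size of a single bisection step ensures that the averaging-type functionals do not reach $T$ either, so only vertex-of-$T$ terms contribute, and the sum reduces to the $\cT$-nodal interpolation of $v$ on $T$, which equals $v|_T$. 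For the $L_2$-estimate~\eqref{eq:QuasiL2Projection}, the error is supported on a bounded enlargement of the fine region; on each $\hat\cT_j$-element $K$ there, standard Scott--Zhang stability gives
\begin{equation*}
    \norm{I_j v - v}_{L_2(K)}^2 \lesssim \inf_{p \in \PP_1}\norm{v - p}_{L_2(\omega_K)}^2
\end{equation*}
for a patch $\omega_K$ of bounded cardinality, and summation with finite overlap combined with $\inf_p \norm{v-p}_{L_2(\omega_K)} \leq \norm{v-Q_j v}_{L_2(\omega_K)}$ delivers the claim.

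The main technical obstacle is verifying that the constructed $\varphi_y$ actually lie in $V(\cT) \cap V(\hat\cT_j)$ — not merely in $V(\hat\cT_j)$ — throughout the transition zone, where $\cT$-elements of level in $(j-\ell, j]$ still contain interior $\hat\cT_j$-nodes. This requires a careful matching of the selected nodes and local functionals to the $\cT$-structure, in particular ensuring that values prescribed at interior $\hat\cT_j$-nodes inside a $\cT$-element are compatible with linearity on that element. Aside from this bookkeeping in the transition zone, the remainder of the argument follows standard Scott--Zhang techniques.
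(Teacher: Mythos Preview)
Your approach is genuinely different from the paper's, and considerably more complicated. The paper avoids the Scott--Zhang machinery entirely by first observing that $V(\cT)\cap V(\hat\cT_j)=V(\tilde\cT)$, where $\tilde\cT$ is the finest mesh with $\tilde\cT\leq\cT$ and $\tilde\cT\leq\hat\cT_j$. This identification trivializes the basis issue you flag as the ``main technical obstacle'': the nodal basis of $\tilde\cT$ is automatically a basis of the target space, and no biorthogonal construction in a transition zone is needed. The projector is then defined by its values at the vertices $x_k$ of $\tilde\cT$: set $(I_jv)(x_k)=(Q_jv)(x_k)$ if $x_k$ is a vertex of some simplex in $\hat\cT_j\cap\tilde\cT$, and $(I_jv)(x_k)=v(x_k)$ otherwise. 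Property~\eqref{eq:QuasiL2ProjectionEll} follows from a vertex-sharing argument bounding the number of level changes along a chain of adjacent simplices; the $L_2$-estimate~\eqref{eq:QuasiL2Projection} is obtained by comparing vertex values of $I_jv-v$ and $Q_jv-v$ element by element via \Cref{lem:L2normTriangle}, exploiting that on each relevant $\tilde\cT$-element the nonzero vertex values of $I_jv-v$ coincide with those of $Q_jv-v$.

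Beyond the unnecessary complexity, your $L_2$-argument has a gap. You invoke
\[
\inf_{p\in\PP_1}\norm{v-p}_{L_2(\omega_K)}\leq\norm{v-Q_jv}_{L_2(\omega_K)},
\]
but $Q_jv$ is only piecewise linear on $\hat\cT_j$, not a single polynomial on the patch $\omega_K$, so this inequality is not available. Standard Scott--Zhang stability gives control by local polynomial approximation or by $h_K|v|_{H^1(\omega_K)}$, neither of which is directly comparable to $\norm{v-Q_jv}_{L_2}$. The paper sidesteps this by building $I_j$ out of the vertex values of $Q_jv$ itself, so the comparison to $Q_jv$ is pointwise at the nodes and the $L_2$-bound follows from the equivalence of the $L_2$-norm with the $\ell_2$-norm of vertex values on each simplex.
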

\begin{proof}
    Let $\tilde\cT$ be the finest mesh such that $\tilde\cT\leq \cT$ and $\tilde\cT\leq \hat\cT_j$.
    Then  $V(\cT)\cap V(\hat\cT_j)= V(\tilde\cT)$.
    Let $x_k$ be the vertices of $\tilde\cT$. We then define the projection $I_j v \in V(\tilde\cT)$ uniquely by its values on each of these vertices
    \[
        (I_j v)(x_k) = \begin{cases}
        (Q_jv)(x_k) & \text{if $x_k$ is a vertex of a simplex $T\in \hat\cT_j\cap \tilde\cT,$  }\\
        v(x_k)&\text{else.} 
        \end{cases}
    \]
    We now aim to find $\ell$ to verify~\eqref{eq:QuasiL2ProjectionEll}. For this, note that shape regularity bounds the number of simplices sharing a common vertex. By construction, 
    $(I_j v -v)_{|\tilde T} = 0$
    for all $\tilde T\in \tilde{\cT}$ that do not share a vertex with any $T\in \hat\cT_j\cap \tilde\cT$. 
    Conversely, consider a $\tilde T\in \tilde{\cT}$ that does share a vertex with some $T\in \hat\cT_j\cap \tilde\cT$. 
    Let $T = T_1, \ldots, T_{\tilde \ell} = \tilde T$ be a sequence of simplices sharing the common vertex, and  the consecutive elements $T_{j}, T_{j+1}$ share a~$d-1$~dimensional face. 
    Then $\abs{\level(T_j) - \level(T_{j+1})}\leq 1$ and thus $\abs{\level(T) - \level(\tilde T)}\leq \tilde \ell -1$. 
    Note that $\tilde \ell$ is bounded by the maximum number of simplices sharing a vertex. This only depends on shape regularity and is thus uniformly bounded by some $\ell$. 
    The property~\eqref{eq:QuasiL2ProjectionEll} follows.

    To establish~\eqref{eq:QuasiL2Projection}, we split $\tilde\cT$ into the three disjoint subsets
    \[       
        \tilde\cT_j = \tilde\cT\cap \hat\cT_j,\quad
        \tilde\cT^{(0)} = \{T\in\tilde\cT\setminus \tilde\cT_j\colon (I_j v-v)_{|T} = 0\},\quad
        \tilde\cT^{*} = \tilde\cT\setminus(\tilde\cT_j \cup \tilde\cT^{(0)}).
    \]
    Let $\tilde T\in \tilde\cT^{*}$. 
    Then there is $T\in \hat\cT_j$ with $T\subset\tilde T$ and that shares the vertices of $\tilde T$ on which $I_jv-v$ does not vanish.
    Let $\Psi\colon \tilde T\to T$ be an affine linear transformation. Then using \Cref{lem:L2normTriangle} we get
    \[
        \norm{Q_jv-v}^2_{L_2(T)}
        =
        \frac{\vol(T)}{\vol(\tilde T)}\norm{(Q_jv-v)\circ \Psi}^2_{L_2(\tilde T)}
        \geq
        \frac{c_d\vol(T)}{C_d\vol(\tilde T)}\norm{I_jv-v}^2_{L_2(\tilde T)} 
        \]
        since the values $I_jv-v$ on the vertices either agree with those of $(Q_jv-v)\circ \Psi$ or are zero. Finally, for any $v\in V(\mathcal T)$ we have the estimate
    \[
    \begin{aligned}
        \norm{ I_j v -v}_{L_2(D)}^2 
        &= \sum_{\tilde T\in \tilde\cT^*}
        \norm{ I_j v -v}_{L_2(\tilde T)}^2
        + 
        \sum_{\tilde T\in \tilde\cT_j}
        \norm{ Q_j v -v}_{L_2(T)}^2 
        \\
        &\leq
        \frac{C_d\vol(\tilde T)}{c_d\vol(T)}
        \sum_{ T\in \hat\cT_j\setminus\tilde\cT_j}
        \norm{ Q_j v -v}_{L_2( T)}^2
        + 
        \sum_{T\in \tilde\cT_j}
        \norm{ Q_j v -v}_{L_2(T)}^2 
        \\
        &\leq c_\ell\norm{ Q_j v -v}_{L_2(D)}^2,
    \end{aligned}
    \]
    where $c_\ell =  \frac{C_d\vol(\tilde T)}{c_d\vol(T)}$ depends only on $\ell$ and shape regularity.
\end{proof}

We also require that the change from basis of uniform basis functions to hierarchical basis functions behaves well. For this, recall that the nodes in $\hat\cT_j$ are enumerated by $\cN_j\supset \cN_{j-1}$.
\begin{lemma}\label{lem:basis_change}
    Let $u_j = \sum_{k\in\cN_j}z_k \psi_{j,k} = \sum_{k\in\cN_{j-1}}t_k \psi_{j-1,k} +
    \sum_{k\in \cN_j\setminus\cN_{j-1}}t_k \psi_{j,k} \in V(\hat\cT_j)$. Then there are constants $c,C>0$ independent of $j$ and $u_j$ such that 
    \[
      c\sum_{k\in \cN_j}\abs{z_k}^2 
      \leq
      \sum_{k\in \cN_j}\abs{t_k}^2
      \leq
      C\sum_{k\in \cN_j}\abs{z_k}^2.
    \] 
\end{lemma}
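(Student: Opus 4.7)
The plan is to exhibit the change of basis $z \mapsto t$ and its inverse as explicit matrices with entries bounded in modulus and with a shape-regularity-bounded number of nonzero entries per row and per column, all uniformly in $j$. The $\ell_2$-equivalence then follows from Schur's test. The preparatory step is to control the $V$-normalization in~\eqref{eq:NodalBasisFunctions}: writing $\hat\psi_{j,k}$ for the unnormalized nodal hat function ($\hat\psi_{j,k}(x_{k'}) = \delta_{k,k'}$), the standard scaling on shape-regular simplices of diameter $\eqsim 2^{-j}$ gives $\norm{\hat\psi_{j,k}}_V \eqsim 2^{j(1-d/2)}$, so $\psi_{j,k} = c_{j,k}\hat\psi_{j,k}$ with $c_{j,k} \eqsim 2^{j(d/2-1)}$; in particular every ratio $c_{j,k}/c_{j-1,k'}$ or $c_{j,k}/c_{j,k'}$ is bounded above and below by constants depending only on $\hat\cT_0$ and shape regularity.

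Next, decompose $u_j = v_{j-1} + w_j$, where $v_{j-1} \in V(\hat\cT_{j-1})$ is the nodal interpolant of $u_j$ at the vertices in $\cN_{j-1}$, so that $w_j$ vanishes on $\cN_{j-1}$. Evaluating both expansions of $u_j$ at the vertices of $\hat\cT_j$ and using the nodal property~\eqref{eq:NodalBasisFunctions} yields the exact identities
\begin{equation*}
  t_k = \frac{c_{j,k}}{c_{j-1,k}} z_k \quad (k \in \cN_{j-1}), \qquad
  t_k = z_k - \sum_{k' \in \cN_{j-1}} \frac{c_{j,k'}}{c_{j,k}}\hat\psi_{j-1,k'}(x_k)\, z_{k'} \quad (k \in \cN_j \setminus \cN_{j-1}).
\end{equation*}
Since $\abs{\hat\psi_{j-1,k'}(x_k)} \leq 1$, since the coefficient ratios are uniformly bounded, and since only a shape-regularity-bounded number of $k' \in \cN_{j-1}$ satisfy $\hat\psi_{j-1,k'}(x_k) \neq 0$, Schur's test gives $\norm{t}_{\ell_2} \leq C \norm{z}_{\ell_2}$.

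For the reverse bound I invert the triangular system: the first identity yields $z_k = (c_{j-1,k}/c_{j,k}) t_k$ for $k \in \cN_{j-1}$; substituting into the second expresses $z_k$ for $k \in \cN_j \setminus \cN_{j-1}$ as $t_k$ plus a linear combination of the $t_{k'}$ with the same support pattern and coefficients $\eqsim \hat\psi_{j-1,k'}(x_k)$, again uniformly bounded. A second application of Schur's test gives $\norm{z}_{\ell_2} \leq c^{-1} \norm{t}_{\ell_2}$. The only real obstacle is the uniform tracking of the $V$-normalization scaling $c_{j,k}$; once this is settled the argument is purely combinatorial, through locality of hat functions on shape-regular meshes.
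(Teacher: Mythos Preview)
Your proof is correct and follows essentially the same strategy as the paper: both make the change-of-basis matrix explicit, bound its entries via uniformly controlled normalization ratios, bound row and column sparsity via shape regularity, and conclude by Schur's test. The only cosmetic difference is that the paper derives the matrix from the two-scale refinement relation $\psi_{j-1,k} = \sum r_{j,k,\cdot}\,\psi_{j,\cdot}$, whereas you obtain the same matrix by evaluating both expansions at the vertices of $\hat\cT_j$.
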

\begin{proof}
  For $k\in\cN_{j-1}$ we denote by $k_{(1)}, \ldots, k_{(n_k)}\in\cN_j\setminus\cN_{j-1} $ the indices of nodes such that $(x_k, x_{k_{(i)}})$ are edges in $\hat\cT_j$ for $i = 1, \ldots, n_k$. 
  On the other hand, for $k\in\cN_j\setminus\cN_{j-1} $ we denote by $k^{(1)}, k^{(2)}\in\cN_{j-1}$ the indices of nodes in $\hat\cT_{j-1}$ such that $x_k = \frac12(x_{k^{(1)}}+x_{k^{(2)}})$.

As a direct consequence of the triangulations being generated by bisection, we get
the refinement relation
\[
\psi_{j-1,k}
=
r_{j,k,k}
\psi_{j,k}
+
r_{j,k,k_{(1)}}
\psi_{j,k_{(1)}}
  +\ldots +
  r_{j,k,k_{(n_k)}}
  \psi_{j,k_{(n_k)}}
\]
with
\[
  r_{j,k,k}= 
  \frac{\psi_{j-1,k}(x_k)}{\psi_{j,k}(x_k)}
  \quad\text{and}\quad
  r_{j,k,k_{(i)}}=
  \frac{\psi_{j-1,k}(x_{k_{(i)}})}{\psi_{j,k_{(i)}}(x_{k_{(i)}})}.
\]
Note that there is a uniform bound $n \geq n_k$ for all $k$ by uniform shape regularity, and that the ratios satisfy
\[
  0<\tilde c\leq r_{j,k,k_{(i)}} \leq \tilde C
\]
with uniform constants $c, C$
due to the triangulation $\hat\cT_j$ being uniform.
On the other hand, for each index $k\in\{ N_{j-1}+1,\ldots, N_j\}$ in at most two such refinement relations, namely for ${k^{(1)}},{k^{(2)}}\in\{1, \ldots, N_{j-1}\}$.
We thus get
\[
\begin{aligned}
    \sum_{k\in\cN_{j-1}}&t_k \bigl(
    r_{j,k,k}\psi_{j,k}
    +
    r_{j,k,k_{(1)}}
    \psi_{j,k_{(1)}}
    +\ldots +
    r_{j,k,k_{(n_k)}}
    \psi_{j,k_{(n_k)}}
    \bigr)
    +
    \sum_{k\in\cN_j\setminus\cN_{j-1}}t_k \psi_{j,k}\\
&=
\sum_{k\in\cN_{j-1}}b_k 
r_{j,k,k}\psi_{j,k}
+
\sum_{k\in\cN_j\setminus\cN_{j-1}}(b_k +
t_{k^{(1)}}r_{j,k^{(1)},k}
+ t_{k^{(2)}} r_{j,k^{(2)},k}
) \psi_{j,k}
=\sum_{k\in\cN_{j}} z_k \psi_{j,k}
.
\end{aligned}
\]
It follows by Schur's lemma, that 
\[
\sum_{k\in\cN_j}\abs{z_k}^2
\leq
\sum_{k\in\cN_j}\abs{t_k}^2
\max
(
   n \tilde C , 1
)(1+2\tilde C)
\]
and similarly, using 
\[
\psi_{j,k}
=
\frac{1}{
r_{j,k,k}}
\psi_{j-1,k}
-
\frac{
r_{j,k,k_{(1)}}}{
  r_{j,k,k}}
\psi_{j,k_{(1)}}
  -\ldots -\frac{
  r_{j,k,k_{(n_k)}}}{
    r_{j,k,k}}
  \psi_{j,k_{(n_k)}}
\]
we get
\[
\sum_{k\in\cN_j}\abs{t_k}^2
\leq
\sum_{k\in\cN_j}\abs{z_k}^2
\frac{1}{\tilde{c}^2}
\max
(
   n \tilde C , 1
)(1+2\tilde C). \qedhere
\]
\end{proof}

Now we have the necessary tools to prove \Cref{thm: conforming is stable}.

\begin{proof}[Proof of \Cref{thm: conforming is stable}]
  Let $S$ be conforming with corresponding conforming simplicial mesh $\cT$.
  As a consequence of~\eqref{eq: H1 equiv multilevel} and \eqref{eq:QuasiL2Projection}, for any $v\in V(S)$,
  \begin{equation}\label{eq:projectionH1Equivalence}
    \sum_{j\in\N_0} 2^{2j} \norm{(I_j - I_{j-1}) v}_{L_2(D)}^2  \lesssim \norm{v}_{H_0^1(D)}^2 ,
  \end{equation}
 where $I_{-1} v  = 0 $.  
  We construct the coefficients $z_\lambda$ of a stable representation 
  \begin{equation}\label{eq:stabledecompproof}
  v = \sum_{\lambda\in S} z_\lambda \psi_{\lambda} \in V(S)
  \end{equation}
  in multiple steps. For each $j \in \N_0$, let $(\hat z_{j,k})_{k \in \mathcal{N}_j}$ be such that 
  \[
  \sum_{k\in\cN_j}\hat z_{j,k} \psi_{j,k} = (I_j-I_{j-1}) v \in V(S)\cap V(\hat\cT_j).
  \]
By \eqref{eq:levelRieszProperty}, we have $\sum_{k\in \cN_j}\abs{\hat z_{j,k}}^2 \leq 2^{2j} \norm{(I_j-I_{j-1}) v}_{L_2(D)}^2$.
  However, this does not yet yield \eqref{eq:stabledecompproof}, since not all nonzero coefficients $\hat z_{j,k}$ satisfy $(j,k)\in S$. 
Apply $\ell$ changes of basis of the form of \Cref{lem:basis_change} yields the coefficients
  \[
  \sum_{k\in\cN_{j-\ell}}\tilde z_{j, j - \ell,k} \psi_{j-\ell,k} +
  \sum_{i = 0}^{\ell-1} \sum_{k\in\cN_{j-i}\setminus\cN_{j-i-1}}\tilde z_{j,j-i,k} \psi_{j-i,k} = (I_j-I_{j-1}) v \in V(S)\cap V(\hat\cT_j).
  \]
  We utilize the condition~\eqref{eq:QuasiL2ProjectionEll} to deduce
  \[
  (I_j -I_{j-1})v = 0 \quad\text{on $T\in\cT$ with $\level T \leq j-1-\ell$}.
  \] 
  Thus, the coarser basis functions $\psi_{i,k}$ with $i\leq j - 1 -\ell$ do not enter into the expansion of $(I_j-I_{j-1}) v$.
  This in turn implies $\tilde z_{j,j-i,k} = 0$ for all $(j-i,k)\notin S$.
  Finally, we obtain the sought coefficients in \eqref{eq:stabledecompproof} by setting
  \[
  z_{j,k} = \sum_{i = 0 }^\ell \tilde z_{j+i,j, k},\quad\text{so that}\quad
  \sum_{\lambda\in\Lambda} z_\lambda \psi_\lambda = \sum_{j}(I_j - I_{j-1})v  =v.
  \]
  Using the Cauchy-Schwarz inequality and \Cref{lem:basis_change}, we have the estimate
  \[
  \sum_{\lambda\in S} \abs{z_\lambda}^2 
  \leq 
  (\ell + 1) 
  \sum_{(j,k\in S)} \sum_{i = 0}^\ell \abs{\tilde z_{j + i , j , k}}^2
  \leq 
  (\ell + 1) C^\ell
  \sum_{j\in\N_0} \sum_{k\in\cN_j} \abs{\hat z_{j , k}}^2
  \]
  which together with~\eqref{eq:projectionH1Equivalence} implies the desired stability.
\end{proof}

\subsection{Proof of optimality theorem}\label{sec:proofOptimalityResult}

In the refinement process described in~\cite[\sect{3.2}]{bachmayr2024convergent} we utilized a tree structure on the index set. It is now possible to introduce a tree structure on $\Theta$ such that any conforming set $\Theta$ satisfies the tree structure, that is $\lambda \in \Lambda$ implies that all ancestors $\lambda'$ of $\lambda$ are in $\Lambda$ as well. 
We choose the ancestor relation defined as follows: $\lambda'=(j-1,k)$ is the parent of $\lambda$ if either $\lambda= (j,k)$ or $\lambda = (j,k')$ where $x_{k'}$ is the midpoint of an edge with endpoint $x_k$.
This implies the conditions stated in~\cite[\sect{3.2}]{bachmayr2024convergent}.

To provide estimates of the number of refinements, we also require $\omega_0 < \omega_1$ and
\begin{equation*}
  \omega_1(1-\zeta) +\zeta < (1-2\zeta) {\frac{c_\mathrm{stable}\sqrt{c_B}}{C_\Psi \sqrt{C_B}}}
\end{equation*}
previously stated as~\eqref{eq:omega1Condition}.
The following lemma is in virtue similar to~\cite[\lem{5.3}]{BV} with the additional difficulty that we have to combine conforming sets and sets with tree structure.

\begin{lemma}\label{lem:LambdaEstimate}
  Let $\Lambda^{k+1}$ be obtained by \Cref{alg:AdaptiveMethod}.
  In addition to \eqref{eq:zetaCondition}, \eqref{eq:omega0Condition}, \eqref{eq:gammaCondition}, let $\omega_1$ satisfy \eqref{eq:omega1Condition}.
  Then 
  \[
  \# (\Lambda^{k+1}\setminus \Lambda(\T^k)) 
  \lesssim  
  \min \Bigl\{
  \#\bar\Lambda\colon \text{$\bar\Lambda$ is conforming and  } \min_{v\in \cV(\bar\Lambda)}\norm{u-v}_\cV \leq \beta \norm{u-u^k}_B
  \Bigr\}
  \]
  where the hidden constant only dependents on $\omega_0$ and $\omega_1$ and
  with $\beta>0$ such that 
  \[
  (1-C_B\beta^2)^{\frac{1}{2}} =  \biggl(\frac{\omega_1(1-\zeta) +\zeta}{1-2\zeta}\biggr) \frac{C_\Psi \sqrt{C_B}}{c_\mathrm{stable}\sqrt{c_B}}.
  \]
\end{lemma}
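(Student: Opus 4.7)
Proof proposal.

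The plan is to apply the abstract optimality statement of \Cref{lem:AbstractOptimality} to the class of conforming subsets (admissible and stable by \Cref{thm: conforming is stable}) and then to transfer the resulting marking guarantee from the exact residual $\bz^k=\Psi(f-Bu^k)$ to the computed residual $\hat\br^k$, paying the price of replacing the sharp Dörfler threshold by $\omega_1$. Since every conforming set is tree-structured by the ancestor relation introduced in the paragraph preceding the lemma, the conforming set produced in this way will be an admissible competitor in the quasi-optimality property of \textsc{TreeApprox} invoked in \Cref{alg:AdaptiveMethod}(iii).

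Concretely, fix a conforming $\bar\Lambda$ realising the approximation bound and, without loss of generality, replace it by $\bar\Lambda\cup\Lambda(\T^k)$, which is still conforming (admissibility of $\mathfrak S$) and still satisfies $\min_{v\in\cV(\bar\Lambda)}\norm{u-v}_\cV\leq \beta\norm{u-u^k}_B$. Set
\[
\omega := \frac{\omega_1(1-\zeta)+\zeta}{1-2\zeta}.
\]
The definition of $\beta$ in the statement gives precisely
\[
\omega = \frac{\cstable\sqrt{c_B}}{C_\Psi\sqrt{C_B}}\bigl(1-C_B\beta^2\bigr)^{1/2},
\]
and condition \eqref{eq:omega1Condition} guarantees $\sqrt{C_B}\beta<1$, so the hypotheses of \Cref{lem:AbstractOptimality} are met. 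Since $u^k\in\cV(\T^k)=\cV(\Lambda(\T^k))$, applying that lemma with $\Lambda_0=\Lambda(\T^k)$ and $w=u^k$ produces a conforming $\Lambda^\ast\supset\Lambda(\T^k)$ with
\[
\norm{\bz^k|_{\Lambda^\ast}}_{\ell_2}\geq \omega\norm{\bz^k}_{\ell_2}
\quad\text{and}\quad
\#(\Lambda^\ast\setminus\Lambda(\T^k))\leq \#(\bar\Lambda\setminus\Lambda(\T^k)).
\]

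The next step transfers the threshold from $\bz^k$ to $\hat\br^k$ using the bound $\norm{\hat\br^k-\bz^k}_{\ell_2}\leq\zeta\norm{\bz^k}_{\ell_2}$ produced by \Cref{alg:ResEstimate}. Two applications of the triangle inequality give $\norm{\hat\br^k|_{\Lambda^\ast}}_{\ell_2}\geq (\omega-\zeta)\norm{\bz^k}_{\ell_2}$ and $\norm{\hat\br^k}_{\ell_2}\leq(1+\zeta)\norm{\bz^k}_{\ell_2}$. A direct computation from $\omega(1-2\zeta)=\omega_1(1-\zeta)+\zeta$ yields
\[
(\omega-\zeta)-\omega_1(1+\zeta)=2\zeta(\omega-\omega_1)=\frac{2\zeta^2(\omega_1+1)}{1-2\zeta}>0,
\]
so that $\norm{\hat\br^k|_{\Lambda^\ast}}_{\ell_2}\geq \omega_1(1+\zeta)\norm{\bz^k}_{\ell_2}\geq \omega_1\norm{\hat\br^k}_{\ell_2}$. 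Since $\Lambda^\ast$ is conforming, hence tree-structured, and contains $\Lambda(\T^k)$, it qualifies as a competitor in the quasi-optimality property of \textsc{TreeApprox}, whence $\#(\Lambda^{k+1}\setminus\Lambda(\T^k))\lesssim \#(\Lambda^\ast\setminus\Lambda(\T^k))\leq \#\bar\Lambda$. Taking the infimum over conforming $\bar\Lambda$ satisfying \eqref{eq:betaApproxSetProperty} finishes the proof.

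The main delicate point is the algebraic alignment between \eqref{eq:omega1Condition} and the $\zeta$-perturbation: the factors $(1-\zeta)$, $(1-2\zeta)$ and $(1+\zeta)$ arise from the residual perturbation bound, from the abstract Dörfler threshold, and from the transfer of the marking condition, respectively, and the specific form of \eqref{eq:omega1Condition} is exactly what ensures both $(\omega-\zeta)\geq\omega_1(1+\zeta)$ and $\sqrt{C_B}\beta<1$ simultaneously. Verifying that the abstract framework of \Cref{lem:AbstractOptimality} survives the reduction from $\bz^k$ to $\hat\br^k$ without losing the correct Dörfler parameter is therefore the crux; the remaining steps are routine bookkeeping.
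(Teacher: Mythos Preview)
Your proof is correct and follows essentially the same route as the paper: define the same auxiliary threshold $\omega=\hat\omega=\frac{\omega_1(1-\zeta)+\zeta}{1-2\zeta}$, invoke \Cref{lem:AbstractOptimality} on the conforming class, transfer the Dörfler inequality from $\bz^k$ to $\hat\br^k$ via the $\zeta$-bound, and conclude through the tree-approximation guarantee. The only difference is cosmetic: the paper passes from $\norm{\bz^k}$ to $\norm{\hat\br^k}$ via $\norm{\hat\br^k-\bz^k}\leq\frac{\zeta}{1-\zeta}\norm{\hat\br^k}$ and checks $\hat\omega-\frac{(1+\hat\omega)\zeta}{1-\zeta}=\omega_1$, whereas you bound $\norm{\hat\br^k}\leq(1+\zeta)\norm{\bz^k}$ and verify $(\omega-\zeta)\geq\omega_1(1+\zeta)$; both are equivalent rearrangements of the same identity.
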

\begin{proof}
  We set $\hat\omega = \frac{\omega_1(1-\zeta)+\zeta}{1-2\zeta}$. Then \eqref{eq:omega1Condition} ensures the relation between $\hat\omega$ and $\beta$ required in \Cref{lem:AbstractOptimality} and $\omega_1 = (\hat\omega- \frac{(1+\hat\omega)\zeta}{1-\zeta})$.
  Let $\hat\Lambda\supset\Lambda(\T^k)$ be the smallest conforming set satisfying 
  \[
  \norm{\Psi(Bu-f)|_{\hat\Lambda}}_{\ell_2} \geq \hat\omega \norm{\Psi(Bu-f)}_{\ell_2}.
  \]
  Then 
  \begin{align*}
      \norm{\hat\br^k|_{\hat\Lambda}}_{\ell_2} 
      &\geq  
      \norm{\Psi(Bu-f)|_{\hat\Lambda}}_{\ell_2}  - \norm{\hat\br^k - \Psi(Bu-f)}_{\ell_2}\\
      &\geq \hat\omega \norm{\Psi(Bu-f)}_{\ell_2}  - \norm{\hat\br^k - \Psi(Bu-f)}_{\ell_2}\\
      &\geq \hat\omega \norm{\br^k}_{\ell_2}  - (1+\hat\omega)\norm{\hat\br^k - \Psi(Bu-f)}_{\ell_2}\\
      &\geq (\hat\omega- \frac{(1+\hat\omega)\zeta}{1-\zeta})\norm{\hat\br^k}_{\ell_2}\\
      &= \omega_1  \norm{\hat\br^k}_{\ell_2}
  \end{align*}
and therefore 
\[
  \# (\hat\Lambda\setminus \Lambda(\T^k)) \leq  \min \Bigl\{
  \#\bar\Lambda\colon \text{$\bar\Lambda$ is conforming and  } \min_{v\in V(\bar\Lambda)}\norm{u-v}_V \leq \beta \norm{u-u^k}_B
  \Bigr\}
  \]
by \Cref{lem:AbstractOptimality}. Now tree estimation (cf. \cite[\corol{4.20}]{BV}) ensures 
\begin{equation}\label{eq:treesetsestimate}
\# (\Lambda^{k+1}\setminus \Lambda(\T^k)) \lesssim  \# (\Lambda_T\setminus \Lambda(\T^k))
\end{equation}
for all $\Lambda_T\supset \Lambda(\T^k)$ with tree structure
 satisfying
\[
  \norm{\hat\br^k|_{\Lambda_T}}_{\ell_2} \geq \omega_1 \norm{\hat\br^k}_{\ell_2}.
\]
Here, the hidden constant in~\eqref{eq:treesetsestimate} only depends on $\omega_0$ and $\omega_1$.
Since the smallest $\Lambda_T$ is smaller than $\hat\Lambda$, we get
\[
  \# (\Lambda^{k+1}\setminus \Lambda(\T^k)) \lesssim  \min \Bigl\{
      \#\bar\Lambda\colon \text{$\bar\Lambda$ is conforming and  } \min_{v\in V(\bar\Lambda)}\norm{u-v}_V \leq \beta \norm{u-u^k}_B
      \Bigr\}
\]
as asserted.
\end{proof}

The last necessary ingredient is the relation of the sizes of the sets $\Lambda^k$ and the size of the meshes $\T^k$.

\begin{lemma}\label{lem:amortizedCosts}
  Let $\T^k = \msx{\Lambda^k}$ and $\Lambda^k$ be generated by \Cref{alg:AdaptiveMethod}. Then 
  \[
       N(\T^k) -  N(\T^0) \lesssim \sum_{j = 1}^k \#(\Lambda^{j}\setminus\Lambda(\T^{j-1})).
  \]
\end{lemma}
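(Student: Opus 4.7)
The plan is a standard amortized complexity argument of Binev--Dahmen--DeVore / Stevenson type, applied here across the iterations $j = 1, \ldots, k$ simultaneously rather than within a single adaptive loop.

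First, I would unwind the construction of $\mathcal T^k$ from $\Lambda^k$ via \textsc{Mesh}. The family $\mathbb T^k = (\mathcal T^k_\nu)_{\nu \in F^k}$ is obtained by, for each stochastic mode $\nu$, taking the coarsest conforming mesh $\mathcal T^k_\nu \geq \hat{\mathcal T}_0$ whose associated index set $S(\mathcal T^k_\nu)$ contains $\{\lambda : (\nu,\lambda) \in \Lambda^k\}$. Since $\Lambda^k$ carries the tree structure induced on $\Theta$ by the newest-vertex-bisection parent relation introduced at the beginning of \Cref{sec:proofOptimalityResult}, the conforming closure has a controlled overhead per newly added index.

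The next step is the key structural observation: each element of the difference $\Lambda^j \setminus \Lambda(\mathcal T^{j-1})$ is a frame index $(\nu,\lambda)$, and adding it requires that the mesh $\mathcal T^{j-1}_\nu$ be refined to a mesh $\mathcal T^j_\nu$ that includes the simplices needed to support $\psi_\lambda$. By \Cref{ass:wavelettheta}-type shape regularity and the properties of newest vertex bisection, the number of bisections needed to insert one additional hat function index, \emph{including the bisections enforced by conformity} (i.e.\ the completion step), is bounded on average by a constant depending only on $\hat{\mathcal T}_0$; see for example Binev--Dahmen--DeVore \cite{BV} (as cited for \textsc{TreeApprox}) and Stevenson's complete analysis of newest-vertex bisection. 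Summing this bound across all modes $\nu$ and all iterations $j = 1, \ldots, k$ gives
\begin{equation*}
N(\mathbb T^k) - N(\mathbb T^0) = \sum_{\nu \in F^k} \bigl( \# \mathcal T^k_\nu - \# \mathcal T^0_\nu \bigr) \lesssim \sum_{j=1}^k \#(\Lambda^j \setminus \Lambda(\mathcal T^{j-1})),
\end{equation*}
with the convention that $\# \mathcal T^0_\nu = \# \hat{\mathcal T}_0$ for modes $\nu$ not yet present at the start.

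The main obstacle is that the Binev--Dahmen--DeVore--Stevenson bound is usually stated for a single sequence of refinements of one mesh against a sequence of marked sets, whereas here we have an independent refinement for each stochastic mode $\nu$, possibly with $\nu$ being newly activated at some iteration. The cleanest way to handle this is to apply the scalar amortized bound separately to each $(\mathcal T^j_\nu)_{j \geq j_0(\nu)}$, where $j_0(\nu)$ is the first iteration at which $\nu \in F^j$, and then sum over $\nu$; since each stochastic mode's refinements are driven exclusively by the increments of $\Lambda$ restricted to $\{\nu\} \times \Theta$, and since these increments aggregate (over $\nu$ and $j$) to $\sum_{j=1}^k \#(\Lambda^j \setminus \Lambda(\mathcal T^{j-1}))$, the per-mode bounds sum to the claimed estimate. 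The hidden constant depends only on $\hat{\mathcal T}_0$ and shape regularity, not on $k$ or on the cardinality of $F^k$.
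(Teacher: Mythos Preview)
Your proposal is correct and follows essentially the same approach as the paper: both invoke the tree structure of $\Lambda^j$ together with the Binev--Dahmen--DeVore/Stevenson amortized complexity bound for newest-vertex bisection (the paper cites \cite[\theo{6.1}]{Stevenson:08} and \cite[\propo{3.10}]{bachmayr2024convergent}), applied per stochastic mode $\nu$ and summed. The paper's proof is terser but records one concrete detail you leave implicit: each new frame index $(\nu,\lambda)\in\Lambda^j\setminus\Lambda(\mathbb T^{j-1})$ corresponds to at most $d$ successive bisections of a mesh element (since $\hat{\mathcal T}_j$ arises from $\hat{\mathcal T}_{j-1}$ by $d$ uniform refinements), followed by conforming closure.
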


 \begin{proof}
 By the tree structure of $\Lambda^{j}$, each element of $\Lambda^{j}\setminus\Lambda(\T^{j-1})$ corresponds to applying to a certain mesh element at most~$d$ successive bisections with conforming closures of the respective meshes. Following similar lines as in \cite[\propo{3.10}]{bachmayr2024convergent}, the result thus follows with the amortized complexity bound for bisection given in \cite[\theo{6.1}]{Stevenson:08}.
 \end{proof}

\begin{proof}[Proof of \Cref{thm: quasioptimal approximation}]
By \Cref{lem:amortizedCosts} and \Cref{lem:LambdaEstimate}, we get 
\[
\begin{aligned}
   N(\T^k) &-  N(\T^0) 
  \lesssim \sum_{j = 1}^k \#(\Lambda^{j}\setminus\Lambda(\T^{j-1}))\\
  &\lesssim \sum_{j = 1}^k \min \Bigl\{
    \#\bar\Lambda\colon \text{$\bar\Lambda$ is conforming and  } \min_{v\in \cV(\bar\Lambda)}\norm{u-v}_\cV \leq \beta \norm{u-u^{j-1}}_B
    \Bigr\}.
\end{aligned}
\]
We next note that for any conforming simplicial mesh,
\[
    \#\Lambda(\mathcal T)  +1\eqsim \dim V(\mathcal T) +1 \eqsim \#\mathcal T 
\]
with constants depending on shape regularity. Thus, 
\begin{multline*}
  \min \Bigl\{  
    \#\bar\Lambda\colon \text{$\bar\Lambda$ is conforming and  } \min_{v\in \cV(\bar\Lambda)}\norm{u-v}_\cV \leq \beta \norm{u-u^{j-1}}_B
  \Bigr\}\\
  \eqsim
  \min \Bigl\{  
    \# N(\T)\colon \min_{v\in \mathcal \cV(\T)}\norm{u-v}_\cV \leq \beta \norm{u-u^{j-1}}_B
  \Bigr\}\\
  \leq \norm{u}_{\mathcal A^s}^{\frac1s} (\beta \norm{u - u^{j-1}}_B)^{-\frac1s}.
\end{multline*}
Finally, using the error reduction~\eqref{eq:ErrorReduction} of \Cref{thm:ErrorReduction} together with previous estimates,  we get 
\[
  N(\T^k) -  N(\T^0) \lesssim 
  \norm{u}_{\mathcal A}^{\frac1s}
  \norm{u - u^{k}}_B^{-\frac1s} \beta^{-\frac1s} 
  \sum_{j = 1}^k (\delta^\frac1s)^{1+k - j}
  \leq 
  \norm{u}_{\mathcal A^s}^{\frac1s}
  \norm{u - u^{k}}_\cV^{-\frac1s} \beta^{-\frac1s}c_B^{-\frac1{2s}} 
  \frac{1}{1- \delta^{\frac1s}},
\]
which is~\eqref{eq:quasi_optimal_approximation} since $0<c_B\beta^2 \leq \frac{c_B}{C_B}\leq 1$. This concludes the proof of (i).

The method \textsc{TreeApprox} scales linear in the size of $\Lambda^k$, and the methods \textsc{mesh} and \textsc{GalerkinSolve} scale linear in the size $N(\T^k)$. They are thus dominated by the costs of $\textsc{ResEstimate}$.

The costs of $\textsc{ResEstimate}$ are estimated in \Cref{prop:ResEstimate}. 
By invoking the first part of the theorem, we can estimate the approximation norm $\norm{u^k}_{\cA^s}$ in terms of $\norm{u}_{\cA^s}$ via
\begin{align*}
  \norm{u^k}_{\cA^s}
  &=
  \max_{N< N(\T^k)}
  N^s 
  \min_{N(\mathbb T)\leq N}
  \min_{u_N\in \mathcal V(\mathbb T)}\norm{u^k-u_N}_\mathcal V
  \\
  &\leq
  \max_{N< N(\T^k)}
  N^s 
  \min_{N(\mathbb T)\leq N}
  \min_{u_N\in \mathcal V(\mathbb T)}(\norm{u-u^k}_\cV+\norm{u-u_N}_\mathcal V)
  \\
  &\leq
  N(\T^k)^s\norm{u-u^k}_\cV + \norm{u}_{\cA^s}.
\end{align*}
We also get
\[
  \bignorm{\bigl(\norm{[u^k]_\nu}_V \bigr)_{\nu\in\cF}}_{\cA^s}
  \lesssim
  \norm{u^k}_{\cA^s}
  \leq
  \norm{u}_{\cA^s} + N(\T^k)^{s}\norm{u-u^k}_{\cV}
  \lesssim
  \norm{u}_{\cA^s}.
\]
By~\eqref{eq:ropest}, the number of operations is thus bounded by a fixed multiple of 
\[
\sum_{j = 1}^k
q(1+\abs{\log\eta_j} +\log{\norm{u}_{\cA^s}})
      \bigl(
        \#F^j\log\#F^j +
      \#\cT(f) + \eta_j^{-p} \norm{u}_{\cA^s}^{p} N(\T^j) + \eta_j^{-\frac1s} \norm{u}_{\cA^s}^{\frac1s}\bigr)
\]
for some polynomial $q$. Again, by the first part of the theorem, we can estimate 
$N(\T^k)\lesssim \norm{u-u^k}_\cV^{-\frac1s}\norm{u}_{\cA^s}^{\frac1s}$ and by error reduction 
$\delta^{k-j}\norm{u-u^j}_\cV \lesssim\norm{u-u^k}_\cV $.
Also, by construction $\eta_k\eqsim \norm{u-u^k}_{\cV}$ and $N(\T^0)\lesssim 1$ and $\#\cT(f)\lesssim 1$. 
The term $\#F^j\log\#F^j$ is dominated by $N(\T^j)\log N(\T^j)\lesssim \norm{u-u^j}_\cV^{-p-\frac1s}\norm{u}_{\cA^s}^{p+\frac1s}$.
Thus, the number of operations is bounded by a fixed multiple of
\begin{multline*}
  q\bigl(1+\bigabs{\log\norm{u-u^k}_\cV} +\log{\norm{u}_{\cA^s}}\bigr)
\sum_{j = 1}^k
      \bigl(
      1 + \delta^{(p+\frac1s)(j-k)}\norm{u-u^k}_\cV^{-p-\frac1s} \norm{u}_{\cA^s}^{p+\frac1s}\bigr)\\
      \leq
      q\bigl(1+\bigabs{\log\norm{u-u^k}_\cV} +\log{\norm{u}_{\cA^s}}\bigr)
      \Bigl(k + (1-\delta^{p+\frac1s})^{-1}\norm{u-u^k}_\cV^{-p-\frac1s} \norm{u}_{\cA^s}^{p+\frac1s}\Bigr).
\end{multline*}
Finally, by error reduction $k\lesssim \frac{1}{\log\delta} (\log\norm{u-u^k}_\cV -\log\norm{u}_\cV)$. This concludes the proof of~(iii). For~(ii) we use the same technique but the computational complexity of the method \textsc{ResEstimate} in the affine case stated in~\cite[\propo{3.7}]{bachmayr2024convergent}.
\end{proof}

\section{Conclusion and Outlook}\label{sec:Conclusion}

We have extended the adaptive stochastic Galerkin finite element method developed in \cite{bachmayr2024convergent} to handle nonlinear parametrizations of coefficients, more specifically, allowing for series expansions composed with a general class of smooth nonlinearities. In particular, this applies to the exponential function, covering log-affine parametrizations.
This treatment of nonlinear parametrizations can be accommodated within the basic structure of the method from \cite{bachmayr2024convergent}, which is based on a combination of adaptive operator compression in the stochastic variables with spatial error estimation using finite element frames. The modification for nonlinear coefficient parametrizations requires modified compression strategies, such that the results on uniform error reduction by each iteration of the adaptive method shown in  \cite{bachmayr2024convergent} remain valid.

Both for affine and for nonlinear parametrizations, we have shown that the adaptive scheme generates quasi-optimal approximations at near-optimal computational costs. To achieve optimal computational costs, as in \cite{BV,bachmayr2024convergent} we crucially rely on coefficient parametrizations with multilevel structure, which leads to suitable approximate sparsity of solutions and operators.

There are several natural directions for further work. First, the nonlinear coefficient parametrizations are a natural fit for applications to random domain problems with transformation to a reference domain as, e.g., in \cite{MR3563282,MR4799248}. Second, the generalization of the adaptive scheme to vector valued elliptic problems and to parabolic problems will be of interest. Finally, we also plan to investigate adaptive methods based on similar concepts for goal-oriented discretization refinement.

\appendix
\section{Proof of auxiliary lemmas}\label{sec:lemmas}

\begin{proof}[Proof of \Cref{lem: Stoch Schur Lemma}]
The result follows from the estimates 
\begin{align*}
\norm{Bv}_\cV
& \leq \Bigl(\sum_{\nu\in \mathcal F} \Bignorm{\sum_{\nu'\in \mathcal F} [a]_{\nu\nu'}\nabla [v]_{\nu'}}^2_{L_2(D)}\Bigr)^{\frac12}\\
&\leq \biggl(\sum_{\nu\in \mathcal F} \Bignorm{\Bigl(\sum_{\nu'\in \mathcal F} \abs{[a]_{\nu\nu'}}\abs{\nabla [v]_{\nu'}}^2\Bigr)\Bigl(\sum_{\nu'} \abs{[a]_{\nu\nu'}}\Bigr)}_{L_1(D)}\biggr)^{\frac12}\\
&\leq \Bignorm{\sum_{\nu\in \mathcal F}\Bigl(\sum_{\nu'\in \mathcal F} \abs{[a]_{\nu\nu'}}\abs{\nabla [v]_{\nu'}}^2\Bigr)}^\frac12_{L_1(D)} \max_{\nu\in\mathcal F}\Bignorm{\sum_{\nu'\in \mathcal F} \abs{[a]_{\nu\nu'}}}_{L_\infty(D)}^{\frac12}\\
&\leq 
\max_{\nu'\in\mathcal F}\Bignorm{\sum_{\nu\in \mathcal F} \abs{[a]_{\nu\nu'}}}_{L_\infty(D)}^{\frac12}
\Bignorm{\sum_{\nu\in \mathcal F}\abs{\nabla [v]_{\nu }}^2}^\frac12_{L_1(D)} \max_{\nu\in\mathcal F}\Bignorm{\sum_{\nu'\in \mathcal F} \abs{[a]_{\nu\nu'}}}_{L_\infty(D)}^{\frac12},
\end{align*}
since by symmetry $[a]_{\nu\nu'}=[a]_{\nu'\nu}$, we arrive at
\[   \norm{Bv}_\cV \leq \max_{\nu\in\mathcal F}\Bignorm{\sum_{\nu'\in \mathcal F} \abs{[a]_{\nu\nu'}}}_{L_\infty(D)} \norm{v}_\cV . \qedhere \]
\end{proof}

  \begin{proof}[Proof of \Cref{lem: Stechkin}]
  First, we introduce the functions
  \[
    a(x) = a_i \quad\text{if $x\in (i-1,i]$,}\quad
    b(x) = b_i \quad\text{if $x\in (i-1,i]$,} 
    \quad\text{and}\quad
    B(x) = \int_0^x b(y) \sdd y.
  \]
  Note that $a(x)$ is monotonically decreasing and $B(x)$ is strictly monotonically increasing. Thus $B\colon [0,\infty) \to [0,\infty)$ is bijective. Then 
  \[
  \Bigl(\sum_{i = k+1}^\infty a_i b_i\Bigr) \Bigl(\sum_{i = 1}^k b_i\Bigr)^{-1}
  = \int_k^\infty a(x) b(x)\frac1{B(k)}\sdd x
  = \sum_{n=1}^{\infty} \int_{n B(k)}^{(n+1)B(k)}\frac{a(B^{-1}(y))}{B(k)} \sdd y.
  \] 
  Using $0<p<1$ and monotonicity of $a$, we obtain
  \begin{align*}
  \Bigl(\sum_{i = k+1}^\infty a_i b_i\Bigr) \Bigl(\sum_{i = 1}^k b_i\Bigr)^{-1}
  &\leq
  \biggl(\sum_{n=1}^{\infty} \biggl(\int_{n B(k)}^{(n+1)B(k)}\frac{a(B^{-1}(y))}{B(k)} \sdd y\biggr)^p\biggr)^\frac1p
 \leq 
  \biggl(\sum_{n = 1}^\infty a(B^{-1}(nB(k)))^p\biggr)^\frac1p\\
  &\leq \biggl(\int_0^\infty \frac{a(B^{-1}(y))^p}{B(k)}\sdd y\biggr)^\frac1p
  = \biggl(\int_0^\infty \frac{a(x)^pb(x)}{B(k)}\sdd y\biggr)^\frac1p\\
  &\leq \Bigl(\sum_{i = 1}^\infty a_i^p b_i\Bigr)^\frac1p \Bigl(\sum_{i = 1}^k b_i\Bigr)^{-\frac1p}. \qedhere
  \end{align*}
  \end{proof}

  \begin{proof}[Proof of \Cref{lem:SummabilityOfRefSequence2}]
    Similarly to the proof of \Cref{lem: summability of reference sequence}, we obtain 
    \begin{align*}
    \sum_{\mathbf k\in \mathcal F(\N_0)}
    a_\mathbf k^p b_\mathbf k
    &= a_\mathbf 0^p b_\mathbf 0 + 
    \sum_{L = 0}^\infty
    \sum_{\mathbf k\in \mathcal F(\N_0,L)}
    a_{\mathbf k + e_L}^p b_{\mathbf k + e_L}\\
    &= 1 + 
    \sum_{L = 0}^\infty
   \rho^{-p} 2^{(d_b-(\alpha+d_a)p)L}
    \sum_{\mathbf k \in \mathcal F(\N_0,L)}
    2^{\# \{\ell\colon k_\ell > 0\}}
    \prod_{\ell=0}^L (\rho^p 2^{\alpha\ell p })^{-k_\ell}
    \binom{k_\ell +t}{t}
    \\
    &= 1 + 
    \sum_{L = 0}^\infty
    \rho^{-p} 2^{(d_b-(\alpha+d_a)p)L}
    \bigg(
    2
    \sum_{k =0}^\infty
    (\rho^p 2^{\alpha \ell p })^{-k}
    \binom{k +t}{t}
    -1
    \bigg).
    \end{align*}
    Since $\sum_{k = 0}^\infty \binom{k+t}{t} x^k = \frac{1}{(1-x)^{t+1}}$ for $\abs{x}<1$, we have 
  \[
    \sum_{\mathbf k\in \mathcal F(\N_0)}
    a_\mathbf k^p b_\mathbf k
    =
    1 + 
    \rho^{-p}
    \sum_{L = 0}^\infty
    2^{(d_b-(\alpha+d_a)p)L}
    \prod_{\ell=0}^L 
    \frac{2 - (1-\rho^{-p} 2^{-\alpha \ell p })^{t+1}}{(1- \rho^{-p} 2^{-\alpha \ell p })^{t+1}},
  \]
  and the product is bounded 
    \begin{align*}
      \prod_{\ell=0}^L 
      \frac{2 - (1-\rho^{-p} 2^{-\alpha \ell p })^{t+1}}{(1- \rho^{-p} 2^{-\alpha \ell p })^{t+1}}
      &\leq
      \exp\sum_{\ell=0}^\infty \bigl(
      \ln
      (2 - (1-\rho^{-p} 2^{-\alpha \ell p })^{t+1})-(t+1)\ln(1- \rho^{-p} 2^{-\alpha \ell p })\bigr)\\
      &\leq
      \exp\biggl(\sum_{\ell=0}^\infty
      \rho^{-p}(t+1)\biggl( 1+ \frac{1}{1- \rho^{-p}} \biggr)2^{-\alpha \ell p }\biggr)\\
      &=
      \exp\biggl(
      \frac{\rho^{-p} (t+1)(2 -\rho^{-p})}{(1- \rho^{-p})(1-2^{-\alpha  p })}\biggr)
    \end{align*}
  where we used $\ln
  (2 - (1-x)^{t+1})\leq (t+1)x$ and $\ln(1-\rho^{-p}x)\leq \frac{x}{1-\rho^{-p}}$ for $x\in(0,1)$. The result follows.
  \end{proof}

\label{sec:conclusion}

\bibliographystyle{amsplain}
\bibliography{literature}

\providecommand{\bysame}{\leavevmode\hbox to3em{\hrulefill}\thinspace}
\providecommand{\MR}{\relax\ifhmode\unskip\space\fi MR }
\providecommand{\MRhref}[2]{%
  \href{http://www.ams.org/mathscinet-getitem?mr=#1}{#2}
}
\providecommand{\href}[2]{#2}
\begin{thebibliography}{10}

\bibitem{bachmayr2024multilevel}
M.~Bachmayr and A.~Cohen, \emph{Multilevel representations of random fields and sparse approximations of solutions to random {PDE}s}, Multiscale, Nonlinear and Adaptive Approximation II, Springer, 2024, pp.~25--54.

\bibitem{BCD:17}
M.~Bachmayr, A.~Cohen, and W.~Dahmen, \emph{Parametric {PDE}s: {S}parse or low-rank approximations?}, IMA Journal of Numerical Analysis \textbf{38} (2018), no.~4, 1661--1708.

\bibitem{BCDS:17}
M.~Bachmayr, A.~Cohen, D.~D{\~u}ng, and Ch. Schwab, \emph{Fully discrete approximation of parametric and stochastic elliptic {PDE}s}, SIAM Journal on Numerical Analysis \textbf{55} (2017), 2151--2186.

\bibitem{BCM:17}
M.~Bachmayr, A.~Cohen, and G.~Migliorati, \emph{Sparse polynomial approximation of parametric elliptic {PDE}s. {P}art {I}: affine coefficients}, ESAIM. Mathematical Modelling and Numerical Analysis \textbf{51} (2017), no.~1, 321--339.

\bibitem{bachmayr2024convergent}
M.~Bachmayr, M.~Eigel, H.~Eisenmann, and I.~Voulis, \emph{A convergent adaptive finite element stochastic {G}alerkin method based on multilevel expansions of random fields}, arXiv preprint arXiv:2403.13770, 2024, accepted for publication in SIAM Journal on Numerical Analysis.

\bibitem{BV}
M.~Bachmayr and I.~Voulis, \emph{An adaptive stochastic {G}alerkin method based on multilevel expansions of random fields: convergence and optimality}, ESAIM Math. Model. Numer. Anal. \textbf{56} (2022), no.~6, 1955--1992.

\bibitem{BPR21}
A.~Bespalov, D.~Praetorius, and M.~Ruggeri, \emph{Convergence and rate optimality of adaptive multilevel stochastic {G}alerkin {FEM}}, IMA J. Numer. Anal. \textbf{42} (2022), no.~3, 2190--2213. \MR{4454920}

\bibitem{B:18}
P.~Binev, \emph{Tree approximation for {$hp$}-adaptivity}, SIAM Journal on Numerical Analysis \textbf{56} (2018), no.~6, 3346--3357.

\bibitem{BY:93}
F.~Bornemann and H.~Yserentant, \emph{A basic norm equivalence for the theory of multilevel methods}, Numerische Mathematik \textbf{64} (1993), no.~1, 455--476.

\bibitem{CDN:12}
A.~Cohen, R.~DeVore, and R.~H. Nochetto, \emph{Convergence rates of {AFEM} with {$H^{-1}$} data}, Found. Comput. Math. \textbf{12} (2012), no.~5, 671--718.

\bibitem{CDS:11}
A.~Cohen, R.~DeVore, and Ch. Schwab, \emph{Analytic regularity and polynomial approximation of parametric and stochastic elliptic {PDE}'s}, Analysis and Applications \textbf{9} (2011), no.~1, 11--47.

\bibitem{CPB:19}
A.~J. Crowder, C.~E. Powell, and A.~Bespalov, \emph{Efficient adaptive multilevel stochastic {G}alerkin approximation using implicit a posteriori error estimation}, SIAM Journal on Scientific Computing \textbf{41} (2019), no.~3, A1681--A1705.

\bibitem{DK:92}
W.~Dahmen and A.~Kunoth, \emph{Multilevel preconditioning}, Numerische Mathematik \textbf{63} (1992), no.~1, 315--344.

\bibitem{EGSZ:14}
M.~Eigel, C.~J. Gittelson, Ch. Schwab, and E.~Zander, \emph{Adaptive stochastic {G}alerkin {FEM}}, Computer Methods in Applied Mechanics and Engineering \textbf{270} (2014), 247--269.

\bibitem{EGSZ:15}
\bysame, \emph{A convergent adaptive stochastic {G}alerkin finite element method with quasi-optimal spatial meshes}, ESAIM: Mathematical Modelling and Numerical Analysis \textbf{49} (2015), no.~5, 1367--1398.

\bibitem{EH:23}
M.~Eigel and N.~Hegemann, \emph{On the convergence of adaptive {G}alerkin {FEM} for parametric {PDE}s with lognormal coefficients}, arXiv preprint arXiv:2302.02839, 2024.

\bibitem{GHS:07}
T.~Gantumur, H.~Harbrecht, and R.~Stevenson, \emph{An optimal adaptive wavelet method without coarsening of the iterands}, Math. Comp. \textbf{76} (2007), no.~258, 615--629.

\bibitem{G:14}
C.~J. Gittelson, \emph{Adaptive wavelet methods for elliptic partial differential equations with random operators}, Numerische Mathematik \textbf{126} (2014), 471--513.

\bibitem{Gittelson:13}
C.J. Gittelson, \emph{An adaptive stochastic {G}alerkin method for random elliptic operators}, Mathematics of Computation \textbf{82} (2013), 1515--1541.

\bibitem{MR3563282}
H.~Harbrecht, M.~Peters, and M.~Siebenmorgen, \emph{Analysis of the domain mapping method for elliptic diffusion problems on random domains}, Numer. Math. \textbf{134} (2016), no.~4, 823--856.

\bibitem{HS:16}
H.~Harbrecht and R.~Schneider, \emph{A note on multilevel based error estimation}, Computational Methods in Applied Mathematics \textbf{16} (2016), no.~3, 447--458.

\bibitem{HSS:08}
H.~Harbrecht, R.~Schneider, and Ch. Schwab, \emph{Multilevel frames for sparse tensor product spaces}, Numerische Mathematik \textbf{110} (2008), no.~2, 199--220.

\bibitem{Stevenson:08}
R.~Stevenson, \emph{The completion of locally refined simplicial partitions created by bisection}, Math. Comp. \textbf{77} (2008), no.~261, 227--241.

\bibitem{Trefethen:20}
L.~N. Trefethen, \emph{Approximation theory and approximation practice}, extended ed., Society for Industrial and Applied Mathematics (SIAM), Philadelphia, PA, 2020.

\bibitem{UEE:12}
E.~Ullmann, H.~C. Elman, and O.~G. Ernst, \emph{Efficient iterative solvers for stochastic {G}alerkin discretizations of log-transformed random diffusion problems}, SIAM Journal on Scientific Computing \textbf{34} (2012), no.~2, A659--A682.

\bibitem{MR4799248}
W.~G. van Harten and L.~Scarabosio, \emph{Exploiting locality in sparse polynomial approximation of parametric elliptic {PDE}s and application to parameterized domains}, ESAIM Math. Model. Numer. Anal. \textbf{58} (2024), no.~5, 1581--1613.

\bibitem{V:13}
R.~Verf{\"u}rth, \emph{A posteriori error estimation techniques for finite element methods}, Oxford University Press, 2013.

\end{thebibliography}

\end{document}